\newcommand{\R}{\ensuremath{\mathbb{R}}}
\newcommand{\N}{\ensuremath{\mathbb{N}}}
\DeclareMathOperator{\dist}{\textnormal{dist}}
\DeclareMathOperator{\Tr}{Tr}
\renewcommand{\S}{\ensuremath{\mathbb{S}}}
\newcommand{\LT}[2]{L_{#1, #2}^{\textrm{cl}}}
\newcommand{\eps}{\ensuremath{\varepsilon}}
\newcommand{\grad}{\ensuremath{\nabla}}
\newcommand{\K}{\ensuremath{\mathcal{K}}}
\newcommand{\A}{\ensuremath{\mathcal{A}}}
\DeclareMathOperator{\supp}{supp}
\newcommand{\pps}{\hspace{0.75pt}}
\newtheorem{theorem}{Theorem}[section]
\newtheorem{lemma}[theorem]{Lemma}
\newtheorem{proposition}[theorem]{Proposition}
\newtheorem{corollary}[theorem]{Corollary}
\numberwithin{theorem}{section}
\numberwithin{definition}{section}
\theoremstyle{remark}
\newtheorem{remark}[theorem]{Remark}
\newcommand{\limplus}{{\mathchoice{\vcenter{\hbox{$\scriptstyle +$}}}
  {\vcenter{\hbox{$\scriptstyle +$}}}
  {\vcenter{\hbox{$\scriptscriptstyle +$}}}
  {\vcenter{\hbox{$\scriptscriptstyle +$}}}
}}
\newcommand{\limminus}{{\mathchoice{\vcenter{\hbox{$\scriptstyle -$}}}
  {\vcenter{\hbox{$\scriptstyle -$}}}
  {\vcenter{\hbox{$\scriptscriptstyle -$}}}
  {\vcenter{\hbox{$\scriptscriptstyle -$}}}
}}
\newcommand{\limpm}{{\mathchoice{\vcenter{\hbox{$\scriptstyle \pm$}}}
  {\vcenter{\hbox{$\scriptstyle \pm$}}}
  {\vcenter{\hbox{$\scriptscriptstyle \pm$}}}
  {\vcenter{\hbox{$\scriptscriptstyle \pm$}}}
}}
\newcommand{\specParam}{\ensuremath{\Lambda}}
\begin{document}

\title[Asymptotic shape optimization for Riesz means]{Asymptotic shape optimization for Riesz means of the Dirichlet Laplacian over convex domains}

\author[S. Larson]{Simon LARSON}
\address{\textnormal{(S. Larson)} Mathematical Sciences, Chalmers University of Technology and the University of Gothenburg, SE-41296 Gothenburg, Sweden}
\email{larsons@chalmers.se}

\subjclass[2010]{35P15, 47A75, 49Q10}
\keywords{Shape optimization, Riesz eigenvalue means, Eigenvalue sums, Dirichlet-Laplace operator, Weyl Asymptotics, Convexity}

\begin{abstract}
For $\Omega \subset \R^n$, a convex and bounded domain, we study the spectrum of $-\Delta_\Omega$ the Dirichlet Laplacian on $\Omega$.
For $\specParam\geq0$ and $\gamma \geq 0$ let $\Omega_{\specParam, \gamma}(\A)$ denote any extremal set of the shape optimization problem
\begin{equation}
  \sup\{ \Tr(-\Delta_\Omega-\specParam)_\limminus^\gamma: \Omega \in \A, |\Omega|=1\}, 
\end{equation}
where $\A$ is an admissible family of convex domains in $\R^n$. If $\gamma \geq 1$ and $\{\specParam_j\}_{j\geq1}$ is a positive sequence tending to infinity we prove that $\{\Omega_{\specParam_j, \gamma}(\A)\}_{j\geq1}$ is a bounded sequence, and hence contains a convergent subsequence. Under an additional assumption on $\A$ we characterize the possible limits of such subsequences as minimizers of the perimeter among domains in $\A$ of unit measure. For instance if $\A$ is the set of all convex polygons with no more than $m$ faces, then $\Omega_{\specParam, \gamma}$ converges, up to rotation and translation, to the regular $m$-gon.

\medskip
\emph{This is a revised version of the paper published in the Journal of Spectral Theory (2019) which has been updated in accordance with an erratum published in 2021. The results of the paper remain unchanged, but the proofs of Theorem 2.4 and Corollary 5.3 have been amended.}
\end{abstract}

\maketitle


\section{Introduction and main results}
\subsection{Introduction}
This paper deals with the existence of an asymptotically optimal shape in a certain family of shape optimization problems. By a shape optimization problem we mean a variational problem where given a cost functional $\mathcal{F}$ and an admissible class of  domains $\mathcal{A}$ one wishes to solve the optimization problem 
\begin{equation}
  \inf\{ \mathcal{F}(\Omega): \Omega \in \A\}.
\end{equation}
For an introduction to the general theory of shape optimization we refer the reader to the books~\cite{MR2150214,MR2512810}.

In recent years the study of shape optimization for spectral problems, where the cost functional $\mathcal{F}$ depends on the spectrum of an operator defined on $\Omega$, has been of large interest, see for instance~\cite{Henrot_Book} and references therein. This type of problem has a long history which can be traced back to Lord Rayleigh~\cite{Rayleigh} who conjectured that the disk minimizes the first eigenvalue of the Dirichlet Laplacian among all planar domains of fixed area. Rayleigh's conjecture was proved independently by Faber~\cite{Faber} and Krahn~\cite{Krahn1}; 
the latter of whom also generalized the result to higher dimensions~\cite{Krahn2}.
From this result one can prove a similar statement concerning the second eigenvalue, namely that it is minimized by the union of two disjoint balls of equal measure~\cite{Krahn1, Krahn2, MR0061749}. For even higher eigenvalues the corresponding problems have only in recent years seen much progress. Using techniques coming from free boundary problems in partial differential equations it has been possible to prove the existence of extremal sets within the larger class of quasi-open sets\footnote{A quasi-open set is a superlevel set of a function in $H^1(\R^n)$, for a precise definition see~\cite{MR1217590}.} for the problem
\begin{equation}
   \inf\{ \lambda_k(\Omega): \Omega\subset \R^n \textrm{ quasi-open, } |\Omega|=1\}, 
\end{equation}
where $\lambda_k(\Omega)$ denotes the $k$-th eigenvalue of the Dirichlet Laplacian on $\Omega$ (see~\cite{MR2989451, MR3305655, MR1217590, MR3095209}). Within the same framework one can treat more general functionals depending on the eigenvalues of some spectral problem (see~\cite{MR3305655, MR3467381, MR3095209, MR3309888}).

Here we are interested in a two-parameter family of spectral shape optimization problems for the Dirichlet Laplacian, 
parametrized by $\gamma, \specParam\geq 0$ in~\eqref{eq:Riesz_mean_def} below. In the case $\gamma=0$ the problem essentially reduces to that of minimizing individual eigenvalues of the Dirichlet Laplacian but formulated in terms of the \emph{eigenvalue counting function}:
\begin{equation}\label{eq:def_counting_function}
  N_\Omega(\specParam):=\#\{k\in \N: \lambda_k(\Omega)<\specParam\}.
\end{equation}
Here we shall mainly consider the case $\gamma\geq 1$. 

For $\gamma\geq 1$ and $\specParam\geq 0$ the cost functionals we consider fit into the above mentioned framework for proving existence of extremal sets in the class of quasi-open sets of fixed measure. In the case $\gamma=1$ the problem is equivalent to that of minimizing the sum of the first $m$ eigenvalues for certain values of $m$, and thus it follows from~\cite{MR3305655,MazzTerrVel} that the optimal sets are open and their boundary is smooth up to exceptional sets of lower dimension. For $\gamma>1$ the question of whether the extremal sets are open is to the author's knowledge not covered by existing theory. However, this will not be the question dealt with in this paper. Instead, we restrict ourselves to the much simpler case of considering the problem when restricting the admissible class $\A$ to certain families of convex domains. Before we are able to properly define the functional considered it is necessary to introduce some additional notation.

Let $\Omega$ be an open subset of $\R^n$, $n\geq 2$, and let $-\Delta_\Omega$ denote the Dirichlet Laplace operator on $L^2(\Omega)$, which we define in the quadratic form sense with the Sobolev space $H_0^1(\Omega)$ as its form domain. If we assume that the measure of $\Omega$ is finite then the embedding $H_0^1(\Omega)\hookrightarrow L^2(\Omega)$ is compact, and hence the spectrum of $-\Delta_\Omega$ is discrete. Moreover, the spectrum consists of an infinite sequence of positive eigenvalues accumulating at infinity only. We enumerate these eigenvalues in an increasing sequence where each eigenvalue is repeated according to its multiplicity, 
\begin{equation}
  \lambda_1(\Omega)\leq\lambda_2(\Omega)\leq \lambda_3(\Omega)\leq \ldots 
\end{equation}

An open ball of radius $r>0$ centred at $x\in \R^n$ will be denoted by $B_r(x)$; if the centre of the ball is irrelevant we write simply $B_r$. For the ball of unit measure centred at the origin we write $B$.  

We can now define the two-parameter family of functionals studied here. For $\gamma\geq 0$ and $\specParam\geq0$ the \emph{Riesz eigenvalue means} of $-\Delta_\Omega$ are defined by
\begin{equation}\label{eq:Riesz_mean_def}
  \Tr(-\Delta_\Omega-\specParam)^\gamma_\limminus = \sum_{k: \lambda_k(\Omega)<\specParam} (\specParam- \lambda_k(\Omega))^\gamma, 
\end{equation}
where $x_\limpm := (|x|\pm x)/2$.

Given $\gamma \geq 0$, $\specParam\geq0$ and an admissible class of domains $\A$, we are interested in the shape optimization problem
\begin{equation}\label{eq:Riesz_shape_opt_prob}
    \sup\{ \Tr(-\Delta_\Omega-\specParam)^\gamma_\limminus: \Omega \in \A, |\Omega|=1\}.
\end{equation}
Here and in what follows we denote the $n$-dimensional measure of a set $\Omega\subset \R^n$ by $|\Omega|$ and the $(n-1)$-dimensional measure of its boundary by $|\partial\Omega|$.
For fixed $\gamma, \specParam$ and $\A$ let $\Omega_{\specParam, \gamma}(\A)$ denote any extremal domain of~\eqref{eq:Riesz_shape_opt_prob}. We emphasize that it is not a priori clear that any such domain exists. We shall here restrict our attention to $\gamma \geq 1$ and admissible classes $\A$ which are families of convex domains; without loss of generality we shall always assume that the admissible class $\A$ is closed under rigid transformations and contains at least one domain of unit measure. For such $\A$ the existence of extremal domains $\Omega_{\specParam, \gamma}(\A)$ will be proved in Lemma~\ref{lem:Existence_of_maximizer} below. 

We note that for $\gamma=0$ the Riesz mean is equal to the counting function of eigenvalues less than $\specParam$. Thus, in this case~\eqref{eq:Riesz_shape_opt_prob} is in a sense dual to the problem of minimizing $\lambda_k(\Omega)$. 

Moreover, the problem of maximizing the Riesz mean of order $\gamma=1$ is equivalent to minimizing the sum of the $m$ first eigenvalues for certain values of~$m$. Indeed, since $\Omega_{\specParam, 1}(\A)$ is extremal for~\eqref{eq:Riesz_shape_opt_prob} with $\gamma=1$, we have for any $\Omega\in \A$ with $|\Omega|=1$ that
\begin{equation}\label{eq:Equiv_to_sum1}
   \Tr(-\Delta_{\Omega_{\specParam,1}(\A)}-\specParam)_\limminus \geq \Tr(-\Delta_{\Omega}-\specParam)_\limminus.
\end{equation}
By definition~\eqref{eq:Equiv_to_sum1} is equivalent to
\begin{equation}
   \sum_{k\leq N_{\Omega_{\specParam,1}(\A)}(\specParam)}\lambda_k({\Omega_{\specParam,1}(\A)}) \leq \sum_{k\leq N_{\Omega}(\specParam)}\lambda_k(\Omega)-\specParam\bigl(N_{\Omega}(\specParam)-N_{\Omega_{\specParam,1}(\A)}(\specParam)\bigr).
\end{equation} 
We claim that the right-hand side is no larger than the sum of the $N_{\Omega_{\specParam,1}(\A)}(\specParam)$ first eigenvalues of $-\Delta_{\Omega}$. To this end let $m=N_{\Omega_{\specParam,1}(\A)}(\specParam)$ and $\widehat m = N_{\Omega}(\specParam)$. If $m=\widehat m$ the claim is clearly true. If $m<\widehat m$ then
\begin{align}
  \sum_{k\leq \widehat m} \lambda_k(\Omega)-\specParam(\widehat m- m) 
  = 
  \sum_{k\leq m}\lambda_k(\Omega)+\sum_{k = m+1}^{\widehat m}\lambda_k(\Omega)-\specParam(\widehat m -m)
  <
  \sum_{k\leq m}\lambda_k(\Omega),
\end{align}
where we in the last step used that $\lambda_k(\Omega)<\specParam$ for each $k\leq \widehat m$. The remaining case follows almost identically. Hence $\Omega_{\specParam, 1}(\A)$ is also extremal for the shape optimization problem
\begin{equation}
  \inf\Bigl\{\sum_{k=1}^{m}\lambda_k(\Omega): \Omega \in \A,\, |\Omega|=1\Bigr\},
\end{equation}
with $m=N_{\Omega_{\specParam, 1}(\A)}(\specParam).$

\subsection{Main results}
Let $\K^n$ denote the metric space defined as the set of all bounded convex domains $\Omega \subset \R^n$ with non-empty interior equipped with the Hausdorff distance~\cite{Schneider1}. We shall in this paper restrict our classes of admissible domains $\A$ to consisting of certain subsets of $\K^n$. In an upcoming paper it will be shown that these restrictions can be dropped~\cite{FrankLarson}. We begin by defining two natural classes of convex domains:
\begin{enumerate}[label=(\Alph*)]
  \item For an integer $m\geq n+1$ we let $\mathcal{P}_m\subset \K^n$ be the set of all bounded convex polytopes in $\R^n$ with no more than $m$ faces. We note that $\mathcal{P}_m$ is a closed subset of $\K^n$: If a sequence $\{P_j\}_{j\geq 1}\subset \mathcal{P}_m$ converges to $P\in \K^n$ in the topology of $\K^n$, then $P\in \mathcal{P}_m$. \\[-5pt]

  \item\label{itm:def_Komega} Fix a continuous increasing function $\omega\colon [0, L) \to \R$, with $\omega(0)=0$. Let $x\in \partial\Omega$, after rotation and translation we assume that $x=0$ and the hyperplane $\{x\in \R^n:x_n=0\}$ is tangent to $\partial\Omega$ at $x$. Let $D$ be the projection of $\partial\Omega \cap B_{L/2}$ onto this hyperplane. If $B_{L/2}\cap \partial\Omega$ can be represented as the graph of a function $f\in C^1(D)$ which satisfies
\begin{equation}
  |\nabla f(x')-\nabla f(y')|\leq \omega(|x'-y'|), \quad \forall x', y' \in D
\end{equation}
we say that $\partial\Omega$ has $C^1$-modulus of continuity $\omega$ around $x$. We say that $\partial\Omega$ is $\omega$-uniformly $C^1$ if this holds true with the same $\omega$ at every $x\in \partial\Omega$.

We let $\K^n_\omega$ denote the set of all $\Omega\in \K^n$ whose boundary is $\omega$-uniformly $C^1$. By the uniform regularity assumption it follows that also $\K_\omega^n$ is a closed subset of $\K^n$: If a sequence $\{K_j\}_{j\geq 1}\subset \K_\omega^n$ converges to $K\in \K^n$ in the topology of $\K^n$, then $K\in \K_\omega^n$. We shall always assume that $\omega$ is such that $\K^n_\omega$ contains at least one domain of unit measure.
\end{enumerate}

Our main results are contained in the following theorems:

\begin{theorem}\label{thm:Polytopes}
  Fix $\gamma \geq 1$ and $m\geq n+1$. Let $\{\specParam_j\}_{j\geq 1}\subset \R_\limplus$ be a sequence tending to infinity, and choose for each $j$ a corresponding extremal domain\/ $\Omega_j=\Omega_{\specParam_j, \gamma}(\mathcal{P}_m)$. 

  Then the sequence $\{\Omega_j\}_{j\geq 1}$ has a subsequence which, up to rigid transformations, converges to a domain $P_m\in \mathcal{P}_m$. Moreover, $P_m$ is of unit measure and minimizes the measure of the perimeter among domains in $\mathcal{P}_m$ of the same measure:
  \begin{equation}
    |\partial P_m|=\inf\{|\partial\Omega|: \Omega\in \mathcal{P}_m,\, |\Omega|=1\}.
  \end{equation}
\end{theorem}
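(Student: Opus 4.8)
The plan is to exploit the standard mechanism of asymptotic shape optimisation. Since $|\Omega|$ is fixed, the leading Weyl term $\LT{\gamma}{n}|\Omega|\specParam^{\gamma+n/2}$ of $\Tr(-\Delta_\Omega-\specParam)_\limminus^\gamma$ is common to all competitors, so to leading order the optimiser is governed by the second Weyl term, which with Dirichlet conditions equals $-\tfrac14\LT{\gamma}{n-1}|\partial\Omega|\specParam^{\gamma+(n-1)/2}$; maximising the Riesz mean should therefore become, as $\specParam\to\infty$, the problem of minimising $|\partial\Omega|$. To make this rigorous I would need three ingredients: (i) existence of a perimeter minimiser $\Omega^\ast$ in $\mathcal{P}_m$ of unit measure; (ii) a two-term asymptotic expansion for the Riesz mean that is uniform over bounded subfamilies of $\K^n$; and (iii) boundedness of the maximising sequence, so that Blaschke selection together with the continuity of $|\cdot|$ and $|\partial\,\cdot\,|$ on $\K^n$ apply.

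Ingredient (i) comes from the direct method. A perimeter-minimising sequence in $\{\Omega\in\mathcal{P}_m:|\Omega|=1\}$ has bounded perimeter, and for unit-measure convex bodies a bound on the perimeter forces a bound on the diameter: one has $|\partial K|\gtrsim_n\mathrm{diam}(K)^{1/(n-1)}$, which follows from a slicing argument — cutting perpendicular to a diameter-realising direction, the $(n-1)$-volume $h(t)$ of the slices satisfies $\int h\,dt=1$ with $h^{1/(n-1)}$ concave by Brunn--Minkowski, while the coarea formula and the isoperimetric inequality in $\R^{n-1}$ give $|\partial K|\gtrsim\int h^{(n-2)/(n-1)}\,dt$, and these combine to the stated bound. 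After translating into a common ball, Blaschke selection together with the closedness of $\mathcal{P}_m$ in $\K^n$ and continuity of volume and perimeter produce $\Omega^\ast$. The same diameter bound gives (iii): the extremal domains $\Omega_j$ exist by Lemma~\ref{lem:Existence_of_maximizer}, and being extremal they satisfy $\Tr(-\Delta_{\Omega_j}-\specParam_j)_\limminus^\gamma\geq\Tr(-\Delta_B-\specParam_j)_\limminus^\gamma$; feeding into the left-hand side a two-term upper bound for convex domains, $\Tr(-\Delta_\Omega-\specParam)_\limminus^\gamma\leq\LT{\gamma}{n}|\Omega|\specParam^{\gamma+n/2}-c_n|\partial\Omega|\specParam^{\gamma+(n-1)/2}$ with $c_n>0$ (valid for $\gamma\geq1$), and into the right-hand side the two-term Weyl asymptotics of the smooth domain $B$, yields $\limsup_j|\partial\Omega_j|<\infty$ and hence $\sup_j\mathrm{diam}(\Omega_j)<\infty$.

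Ingredient (ii) is the core of the proof and, I expect, the principal difficulty. One needs
\[
  \Tr(-\Delta_\Omega-\specParam)_\limminus^\gamma=\LT{\gamma}{n}|\Omega|\specParam^{\gamma+n/2}-\tfrac14\LT{\gamma}{n-1}|\partial\Omega|\specParam^{\gamma+(n-1)/2}+o\bigl(\specParam^{\gamma+(n-1)/2}\bigr)
\]
as $\specParam\to\infty$, with the error term uniform over all convex $\Omega$ lying in a fixed ball — in particular over the members of $\mathcal{P}_m$ of uniformly bounded diameter. For a fixed polytope this is classical: one integrates the two-term Weyl law for $N_\Omega$ against $(\specParam-t)_\limminus^{\gamma-1}$, the $(n-2)$-skeleton contributing only at lower order. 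The obstacle is the uniformity, since inside $\mathcal{P}_m$ a polytope may degenerate — faces collapsing to lower-dimensional sets, dihedral angles flattening — and one must control the near-boundary and skeleton contributions with constants depending only on $m$ and the diameter bound. I would obtain this from an explicit remainder estimate, valid for every convex $\Omega$, bounding $\bigl|\Tr(-\Delta_\Omega-\specParam)_\limminus^\gamma-\LT{\gamma}{n}|\Omega|\specParam^{\gamma+n/2}+\tfrac14\LT{\gamma}{n-1}|\partial\Omega|\specParam^{\gamma+(n-1)/2}\bigr|$ by a geometric functional of $\Omega$ times a power of $\specParam$ strictly below $\specParam^{\gamma+(n-1)/2}$, and then noting that this functional stays bounded on the family in question.

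Granting (i)--(iii) the theorem follows. By (iii) we may apply rigid motions so that all the $\Omega_j$ lie in a fixed ball, which changes neither their Riesz means nor their measures or perimeters. Since $\Omega_j$ is extremal and $|\Omega^\ast|=1$, $\Tr(-\Delta_{\Omega_j}-\specParam_j)_\limminus^\gamma\geq\Tr(-\Delta_{\Omega^\ast}-\specParam_j)_\limminus^\gamma$; applying the uniform expansion of (ii) on the left and the ordinary one on the right, cancelling the common term $\LT{\gamma}{n}\specParam_j^{\gamma+n/2}$ and dividing by $\tfrac14\LT{\gamma}{n-1}\specParam_j^{\gamma+(n-1)/2}$, we obtain $\limsup_{j\to\infty}|\partial\Omega_j|\leq|\partial\Omega^\ast|$. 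Since each $\Omega_j\in\mathcal{P}_m$ has unit measure, also $|\partial\Omega_j|\geq|\partial\Omega^\ast|$, hence $|\partial\Omega_j|\to|\partial\Omega^\ast|$. By Blaschke selection a subsequence converges in $\K^n$ to some $P_m$; closedness of $\mathcal{P}_m$ gives $P_m\in\mathcal{P}_m$, and continuity of volume and perimeter on $\K^n$ gives $|P_m|=1$ and $|\partial P_m|=|\partial\Omega^\ast|=\inf\{|\partial\Omega|:\Omega\in\mathcal{P}_m,\,|\Omega|=1\}$, which is the assertion. (For $n=2$ the identification of $P_m$, up to a rigid motion, with the regular $m$-gon is then the classical isoperimetric inequality for polygons.)
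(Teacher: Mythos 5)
Your overall architecture coincides with the paper's (a two-term Berezin-type upper bound to get a uniform perimeter bound, Blaschke selection, and a comparison argument using uniform two-term asymptotics to identify the limit as a perimeter minimizer), and ingredients (i) and the final paragraph are fine. But there are two genuine problems. First, in ingredient (iii) you compare the extremal polytope with the ball: $\Tr(-\Delta_{\Omega_j}-\specParam_j)_\limminus^\gamma\geq\Tr(-\Delta_B-\specParam_j)_\limminus^\gamma$. Since $B\notin\mathcal{P}_m$, extremality over $\mathcal{P}_m$ gives no such inequality; worse, the reverse inequality is what one expects for large $\specParam_j$, since the ball minimizes the (negative) second term. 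The fix is to compare with a fixed unit-measure polytope $\Omega_0\in\mathcal{P}_m$ and to have in hand a \emph{lower} bound of the form $\Tr(-\Delta_{\Omega_0}-\specParam)_\limminus^\gamma\geq \LT{\gamma}{n}|\Omega_0|\specParam^{\gamma+n/2}-\tfrac14\LT{\gamma}{n-1}|\partial\Omega_0|\specParam^{\gamma+(n-1)/2}+o(\specParam^{\gamma+(n-1)/2})$; the paper proves exactly this for every convex body (Lemma~\ref{lem:one_sided_two_term_asymptotics}) by sandwiching $\Omega_0$ from inside with the $C^{1,1}$-regularized bodies $(\Omega_0\sim B_\eps)+B_\eps$ and running the Frank--Geisinger localization on those. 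This lower bound is itself not free, and your proposal does not supply it.

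Second, and more seriously, ingredient (ii) — which you correctly identify as the core — is asserted rather than proved, and the route you sketch does not work as stated. An explicit remainder estimate with error $o(\specParam^{\gamma+(n-1)/2})$ uniform over \emph{all} convex bodies in a fixed ball is precisely what was unavailable at the time (it is the content of the forthcoming~\cite{FrankLarson_18}); invoking it begs the question. Nor is the fixed-polytope case "classical via the two-term Weyl law for $N_\Omega$": the pointwise second term for the counting function on polyhedra in dimension $n\geq3$ is not established in the required generality (Ivrii's theorem needs smooth boundary plus a billiard condition), which is why the paper works directly with Riesz means $\gamma\geq1$. The paper's actual proof of the uniform expansion on $\mathcal{P}_m$ (Lemma~\ref{lem:Asymptotic_Polytopes}) is a localization argument: cover $\Omega$ by balls $B_{l(u)}(u)$ with $l(u)$ comparable to $\dist(u,\Omega^c)\vee l_0$; on balls meeting exactly one face apply the half-space model with modulus of continuity $\omega\equiv0$; on balls meeting two or more faces use only the local Berezin bound and show that the corresponding set of centers lies in a $\tfrac{cD(\Omega)}{r(\Omega)}l_0$-neighbourhood of the $(n-2)$-skeleton, whose measure is at most $\binom{m}{2}\tfrac{cD(\Omega)^n}{r(\Omega)^2}l_0^2$ — this is where the bound on the number of faces and the non-degeneracy of the maximizers (unit measure plus bounded perimeter gives $r(\Omega)$ bounded below) enter. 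Without an argument of this kind your proof of the theorem is incomplete.
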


We also prove the corresponding result in $\K^n_\omega$.
\begin{theorem}\label{thm:UnifC1}
  Fix  $\gamma \geq 1$ and $\omega$ as in~\ref{itm:def_Komega} above. Let $\{\specParam_j\}_{j\geq 1}\subset \R_\limplus$ be a sequence tending to infinity, and choose for each $j$ a corresponding extremal domain\/ $\Omega_j=\Omega_{\specParam_j, \gamma}(\K^n_\omega)$. 

  Then the sequence $\{\Omega_j\}_{j\geq 1}$ has a subsequence which, up to rigid transformations, converges to a domain $K_\omega \in \K^n_\omega$. Moreover, $K_\omega$ is of unit measure and minimizes the measure of the perimeter among domains in $\K^n_\omega$ of the same measure:
  \begin{equation}
    |\partial K_\omega|=\inf\{|\partial\Omega|: \Omega\in \K^n_\omega,\, |\Omega|=1\}.
  \end{equation}
\end{theorem}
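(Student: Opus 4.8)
The plan is to pit a sharp one-sided spectral bound, valid for \emph{all} convex bodies, against a two-term Weyl asymptotic expansion for a single, cleverly chosen competitor. Two spectral ingredients, both for $\gamma\geq1$, do the work. The first is an improved Berezin--Li--Yau inequality with a perimeter correction: for every bounded convex $\Omega\subset\R^n$ and every $\specParam\geq0$,
\begin{equation}\label{eq:plan-BLY}
  \Tr(-\Delta_\Omega-\specParam)^\gamma_\limminus \;\leq\; \LT{\gamma}{n}\,|\Omega|\,\specParam^{\gamma+n/2} \;-\; \tfrac14\LT{\gamma}{n-1}\,|\partial\Omega|\,\specParam^{\gamma+(n-1)/2}.
\end{equation}
The second is the matching two-term asymptotics, which for a \emph{fixed} domain $\Omega\in\K^n_\omega$ takes the form
\begin{equation}\label{eq:plan-Weyl}
  \Tr(-\Delta_\Omega-\specParam)^\gamma_\limminus \;=\; \LT{\gamma}{n}\,|\Omega|\,\specParam^{\gamma+n/2} \;-\; \tfrac14\LT{\gamma}{n-1}\,|\partial\Omega|\,\specParam^{\gamma+(n-1)/2} \;+\; o\bigl(\specParam^{\gamma+(n-1)/2}\bigr)
\end{equation}
as $\specParam\to\infty$; it is for this step, and essentially only this step, that the $C^1$-modulus-of-continuity hypothesis built into $\K^n_\omega$ is used. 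Since \eqref{eq:plan-Weyl} is needed for a single domain only, no uniformity of the remainder is required.

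First I would check that $P:=\inf\{|\partial\Omega|:\Omega\in\K^n_\omega,\,|\Omega|=1\}$ is attained. A convex body of unit measure whose perimeter is close to $P$ has inradius bounded below and diameter bounded above (a unit-measure convex body that is long and thin has large perimeter), so a minimising sequence is precompact by the Blaschke selection theorem; its limit is non-degenerate, has unit measure, and---since $\K^n_\omega$ is closed in $\K^n$ and the perimeter is continuous under Hausdorff convergence of non-degenerate convex bodies---belongs to $\K^n_\omega$ and realises $P$. Denote this minimiser by $\Omega^\ast$.

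The heart of the argument is a comparison with $\Omega^\ast$. Since $\Omega_j$ is extremal and $|\Omega_j|=|\Omega^\ast|=1$, extremality gives
\[
  \Tr(-\Delta_{\Omega_j}-\specParam_j)^\gamma_\limminus \;\geq\; \Tr(-\Delta_{\Omega^\ast}-\specParam_j)^\gamma_\limminus .
\]
Bounding the left-hand side from above by \eqref{eq:plan-BLY} and the right-hand side from below by \eqref{eq:plan-Weyl} (with $|\partial\Omega^\ast|=P$), the leading terms cancel; dividing by $\specParam_j^{\gamma+(n-1)/2}>0$ and letting $j\to\infty$ forces $\limsup_{j\to\infty}|\partial\Omega_j|\leq P$. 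Consequently the perimeters of the $\Omega_j$, and hence their diameters, are uniformly bounded, and their inradii are bounded below---this is the boundedness assertion of the theorem. After rigid motions the $\Omega_j$ all lie in one fixed ball, so by Blaschke selection a subsequence converges in $\K^n$ to some $K_\omega$; closedness of $\K^n_\omega$ gives $K_\omega\in\K^n_\omega$, non-degeneracy gives $|K_\omega|=\lim|\Omega_j|=1$, and continuity of the perimeter gives $|\partial K_\omega|=\lim|\partial\Omega_j|\leq P$. Since $K_\omega\in\K^n_\omega$ has unit measure we also have $|\partial K_\omega|\geq P$, whence $|\partial K_\omega|=P$, which is the assertion.

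The non-spectral steps above---continuity of the measure and the perimeter under Hausdorff convergence, the geometric fact that a unit-measure convex body with controlled perimeter cannot be long and thin, and the Blaschke selection theorem---are standard. The real content is \eqref{eq:plan-BLY} and \eqref{eq:plan-Weyl}. The improved Berezin--Li--Yau bound \eqref{eq:plan-BLY} with the \emph{sharp} constant $\tfrac14\LT{\gamma}{n-1}$ is known for convex domains when $\gamma\geq3/2$ and must be established for $1\leq\gamma<3/2$; alternatively one may substitute a two-term upper bound whose remainder is $o(\specParam^{\gamma+(n-1)/2})$ \emph{uniformly} over bounded subfamilies of $\K^n_\omega$. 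I expect the genuine two-term expansion \eqref{eq:plan-Weyl} for convex domains of low boundary regularity---with the boundary coefficient correctly identified as $\tfrac14\LT{\gamma}{n-1}|\partial\Omega|$ for $\gamma\geq1$---to be the main obstacle, and it is precisely the reason the admissible class is confined to $\K^n_\omega$ (respectively to $\mathcal{P}_m$ in Theorem~\ref{thm:Polytopes}). Given \eqref{eq:plan-BLY} and \eqref{eq:plan-Weyl}, the optimisation step is the short comparison above, and the same scheme---with $\mathcal{P}_m$ replacing $\K^n_\omega$ and, in the planar case, the regular $m$-gon arising as the perimeter minimiser---proves Theorem~\ref{thm:Polytopes}.
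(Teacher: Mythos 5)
Your overall scheme is the right one, but the main line of your argument rests on an ingredient that is not available: a two-term Berezin--Li--Yau upper bound with the \emph{sharp} boundary constant $\tfrac14\LT{\gamma}{n-1}$. What is actually known for convex domains (Geisinger--Laptev--Weidl, Larson, Harrell--Stubbe) is the bound of Theorem~\ref{thm:Improved_Berezin_Convex}, in which the second term carries a constant $c(\gamma,n)\LT{\gamma}{n-1}$ with $c(\gamma,n)$ in general much smaller than $1/4$; this is the case for $\gamma\geq 3/2$ as well, so your assertion that the sharp constant is known in that range is incorrect. A uniform, non-asymptotic upper bound with the sharp constant would essentially be a one-sided proof of the two-term Weyl conjecture for all convex domains and is not at hand. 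With only the non-sharp constant, your single comparison with the perimeter minimizer $\Omega^\ast$ yields $\limsup_j|\partial\Omega_j|\leq |\partial\Omega^\ast|/(4c(\gamma,n))$, which establishes boundedness of the sequence but does \emph{not} identify the limit as a perimeter minimizer.

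Your fallback---replacing the sharp bound by a two-term upper bound whose remainder is $o(\specParam^{\gamma+(n-1)/2})$ uniformly on bounded subfamilies of $\K^n_\omega$---is exactly how the paper closes this gap, but it cannot be invoked where you place it: uniformity on a compact subfamily is only usable once one already knows the maximizers lie in such a subfamily, and in your write-up boundedness itself is derived from the sharp inequality. The correct order of operations, which is the paper's, is: first compare with one fixed competitor $\Omega_0$, bounding $\Tr(-\Delta_{\Omega_{\specParam}}-\specParam)_\limminus^\gamma$ from above by the non-sharp Theorem~\ref{thm:Improved_Berezin_Convex} and $\Tr(-\Delta_{\Omega_0}-\specParam)_\limminus^\gamma$ from below by the one-sided asymptotics of Lemma~\ref{lem:one_sided_two_term_asymptotics}; this gives a uniform perimeter, hence diameter, bound and compactness. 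Only then apply the \emph{uniform} two-term expansion of Lemma~\ref{lem:Asymptotic_Komega} to the now-compact family $\{\Omega_{\specParam_j}\}$ and to $\Omega_0=\Omega^\ast$, which yields $|\partial\Omega_{\specParam_j}|\leq |\partial\Omega^\ast|+o(1)$ with the correct constant and hence the identification of the limit. The remaining steps of your proposal (existence of the perimeter minimizer, Blaschke selection, closedness of $\K^n_\omega$, continuity of volume and perimeter under Hausdorff convergence) are fine.
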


We note that if $\A$ is one of the admissible classes considered above then the existence of a set $\Omega'$ realizing the infimum
\begin{equation}\label{eq:perimeter_minimizer}
   \inf\{|\partial\Omega|: \Omega\in \A,\, |\Omega|=1\}
\end{equation}
is an easy consequence of the strong compactness properties of $\K^n$~\cite{Schneider1}. If the set $\Omega'$ is unique, up to rigid transformations, then for any choice of sequence $\{\specParam_j\}_{j\geq 1}$ we find that the corresponding sequence of maximizers converges to $\Omega'$. Since the choice of sequence was arbitrary we obtain that
\begin{equation}\label{eq:Hausdorff_convergence}
   \inf_{\substack{x_0\in \R^n \\ T\in \mathrm{O}(n)}}\dist_{\K^n}(\Omega_{\specParam, \gamma}(\A), x_0+T\Omega') = o(1) \quad \mbox{as }\specParam \to \infty,
 \end{equation} 
 where $\mathrm{O}(n)$ is the orthogonal group in dimension $n$ and $\dist_{\K^n}$ denotes the metric of~$\K^n$.
Since we do not know that the maximizers $\Omega_{\specParam, \gamma}(\A)$ are unique we emphasize that we mean that~\eqref{eq:Hausdorff_convergence} is true when an arbitrary choice of maximizer is made for each $\specParam$.

In particular if $\omega$ is such that the unit ball $B\in \K_\omega^n$ then it is up to translations the unique minimizer of~\eqref{eq:perimeter_minimizer} and hence $\Omega_{\specParam, \gamma}(\K_\omega^n)$ converges to $B$, modulo translations. If the ball is not in $\K_\omega^n$ then minimizers of the perimeter need not be unique and different subsequences of $\Omega_{\specParam_j,\gamma}(\K^n_\omega)$ may converge to different such minimizers.

The existence and characterization of minimizers of the perimeter in the class $\mathcal{P}_m$ is a classical problem. This problem is equivalent to that of finding which polytopes circumscribing a ball have minimal volume~\cite{Gruber}. For $n=2$ the regular $m$-gon is, up to rotations and translations, the unique minimizer. However, in higher dimensions this turns out to be a very difficult problem, and to the author's knowledge it is not known whether the minimizers are unique.

If $\Omega'$ realizing~\eqref{eq:perimeter_minimizer} is not unique then one can still conclude that all isolated minimizers of the perimeter are local asymptotic maximizers of our shape optimization problem in the following sense: Let $\Omega'\in \A$ realize the infimum~\eqref{eq:perimeter_minimizer} and assume that $\Omega'$ is isolated from any other such minimizer with respect to the Hausdorff topology (up to rigid transformations). Then one can construct a perturbed shape optimization problem by removing from $\A$ an arbitrarily small neighbourhood around all other minimizers of the perimeter (in the Hausdorff sense). For this new shape optimization problem any sequence of maximizers would converge to the now unique minimizer of the perimeter. 

\subsection{Related results and further questions}
Similar results in asymptotics of extremal domains have recently been obtained in several different settings. The most commonly studied problem is that of finding a domain asymptotically minimizing $\lambda_k(\Omega)$ among $\Omega$ in a certain class of admissible domains. That is, given an admissible class of domains $\mathcal{A}$ one wants to find a domain $\Omega_\infty$ such that the extremizers of the problem 
\begin{equation}\label{eq:shape_opt_eigenvalues}
\inf\{\lambda_k(\Omega): \Omega\in \mathcal{A}\}
\end{equation} 
converge to $\Omega_\infty$ as $k$ goes to infinity. 

The first result in this direction is due to Antunes and Freitas who proved that if $\mathcal{A}$ is the set of rectangles with area one, then any sequence of extremal sets converges to the unit square as $k$ goes to infinity~\cite{MR3001382}. In~\cite{vdBergGittins} van den Berg and Gittins proved the corresponding result in three dimensions, and in~\cite{GittinsLarson} the result was obtained in general dimension: In the class $\mathcal{A}$ of sets of the form $(0, a_1) \times \ldots \times (0, a_n)\subset \R^n$ of unit measure any sequence of minimizers of the $k$-th eigenvalue converges to the unit cube in $\R^n$ as $k\to \infty$. In~\cite{vdBBG,GittinsLarson} the corresponding results were proved to hold also if one instead considers eigenvalues of the Neumann Laplacian on the same class of domains, in which case the natural problem is to maximize the eigenvalues.

The idea of Antunes and Freitas~\cite{MR3001382} was to reformulate the problem of minimizing eigenvalues as a maximization problem for the counting function~\eqref{eq:def_counting_function} and exploit the explicit structure of Laplacian eigenvalues on rectangles. This effectively reformulates the problem as an optimization problem in the setting of geometric lattice point counting: For fixed $\specParam\geq 0$ find the ellipses among those on the form $(x/a)^2+(a y)^2 \leq \specParam/\pi^2$ which contain the greatest number of positive integer lattice points. The asymptotic problem translates into studying the shape of such ellipses in the limit $\specParam\to \infty$.

The lattice point problem which arose in the work of Antunes and Freitas has since then seen several generalizations. In~\cite{LaugesenLiu1,AriturkLaugesen} 
Laugesen and Liu resp.\ Ariturk and Laugesen
consider a similar problem but replace the bounding region, which in~\cite{MR3001382} was given by a quarter of an ellipse, by the region under the graph of a decreasing concave or convex function~$f$. The optimization problem studied can then be phrased as follows: Given $r>0$ find $s>0$ realizing
\begin{equation}
  \sup_{s>0} \#\{(j, k)\in \N^2: k\leq r s f(j s/r)\}.
\end{equation}  
The main results of~\cite{AriturkLaugesen,LaugesenLiu1} are that under weak assumptions on $f$ the optimal values of $s$ tend to a unique limit as $r\to \infty$. Moreover, the limit can be explicitly expressed in terms of~$f$. More recently these results have been generalized to allow for a shift of the lattice, that is replacing the standard lattice by $(\N+\sigma)\times(\N+\tau)$, see~\cite{LaugesenLiu2}. Also higher dimensional versions of this problem have been studied by Marshall, and Guo and Wang in~\cite{GuoWang,Marshall}.

A particularly interesting case of the lattice point optimization problem is to consider $f(x)=1-x$. In this case the behaviour of maximizing values $s$ is highly erratic, and it was proved by Marshall and Steinerberger~\cite{MarshallSteinerberger} that there is no unique limit as $r$ tends to infinity. In fact they prove that there are infinitely many values of $s$ which are optimal for arbitrarily large $r$, which proves a conjecture of Laugesen and Liu in~\cite{LaugesenLiu1}. Recently a related problem but in the setting of Riesz means was studied in~\cite{LarsonHarmonicOscillators}.

In the same direction as the work of Antunes and Freitas, one can consider the shape optimization problem~\eqref{eq:shape_opt_eigenvalues} as $k$ tends to infinity but with the measure constraint replaced by different ones, see~\cite{MR3312973, MR3098942,FreitasAverages}.
In particular, Bucur and Freitas considered the problem in $\R^2$ under a constraint on the measure of the perimeter and prove that any sequence of optimal domains converges to the disk~\cite{MR3098942}: If $\Omega_k\subset \R^2$ is a domain realizing the infimum
\begin{equation}
   \inf\{\lambda_k(\Omega): \Omega \subset \R^2 \mbox{ open}, |\partial\Omega|\leq |\partial B|\},
\end{equation}
then, up to translation, $\lim_{k\to \infty}\Omega_k = B$ in the Hausdorff topology. In~\cite{FreitasAverages} Freitas considered the problem of minimizing the average (or equivalently the sum) of the $m$ first eigenvalues in the limit as $m$ tends to infinity under a constraint on either the measure or the perimeter. In the former case he obtains that the extremal averages are in a certain sense sub-additive and compute their leading order asymptotic behaviour. In the latter he proves that the extremal sets converge to a ball in the limit $m\to \infty$.

The fact that the problem studied here allows the same type of analysis as in the results discussed above under the constraint of fixed measure, and in large classes of convex sets is the main reason that we find it noteworthy. Moreover, after this article was completed it has been proved that the a priori regularity assumptions on $\A$ needed to identify the asymptotically maximizing domains as minimizers of the perimeter can be removed. That these assumptions can be dropped is a consequence of the results in~\cite{FrankLarson} where two-term asymptotic formulas for $\Tr(-\Delta_\Omega-\Lambda)_\limminus$ are obtained in the semi-classical limit $\Lambda\to \infty$, under the assumption that $\Omega\subset\R^n$ has Lipschitz-regular boundary (in particular this covers all convex domains).

A natural further question is of course whether the convexity assumption can be dropped, and instead consider the optimization problem~\eqref{eq:Riesz_shape_opt_prob} in the class of quasi-open sets. As mentioned in the introduction the existence of optimizers for this problem with $\gamma\geq 1$ is covered by the results in~\cite{MR2989451, MR1217590, MR3506069, MR3095209} (see also~\cite{MR3467381, MR3309888}). The results in these articles consider the case of functions of a \emph{fixed} number of eigenvalues which is not the case for~\eqref{eq:Riesz_mean_def}. However, using the Li--Yau inequality~\cite{MR701919},
\begin{equation}\label{eq:LiYau}
   \lambda_k(\Omega) \geq \Gamma\Bigl(\frac n2 +1 \Bigr)^{2/n}\frac{4\pi n}{n+2}\Bigl(\frac{k}{|\Omega|}\Bigr)^{2/n},
\end{equation} 
we can bound the number of eigenvalues present in the sum~\eqref{eq:Riesz_mean_def} and thus reduce our problem to this situation. 
If $\specParam > \lambda_1(B)$ and $\gamma \geq 1$ then the functional $\Tr(-\Delta_\Omega-\specParam)^\gamma_\limminus$ is Lipschitz continuous as a function of the eigenvalues and weakly strictly decreasing in a neighbourhood of any maximizer (see~\cite{MR3506069}). Hence the problem is covered by the existing results for such cost functions. 
However, in terms of what happens as $\specParam \to \infty$ these results yield little information. To analyse the asymptotic behaviour of maximizers we here use inequalities for $\Tr(-\Delta_\Omega-\specParam)_\limminus^\gamma$ in terms of geometric quantities of $\Omega$, see Theorem~\ref{thm:Improved_Berezin_Convex}. Similar inequalities have in recent years been obtained in a variety of different forms, see e.g.~\cite{GeisingerLaptevWeidl,HarrellStubbe_16,HarrellProvenzanoStubbe_18,WeidlKovarikVugalter,Melas,Weidl08}. These inequalities indeed provide geometric information about maximizers in a more general setting. However, without the convexity assumption it is unclear whether this information is sufficient to prove that maximizers cannot degenerate as $\specParam \to \infty$.

\subsection{Structure of the paper} The remainder of the paper is structured as follows. In Section~\ref{sec:Prel} we introduce some notation, recall some known results, and prove a number of inequalities needed in the sequel. Section~\ref{sec:Existence} is devoted to proving that given an admissible class of convex domains $\A$ the shape optimization problem~\eqref{eq:Riesz_shape_opt_prob} has at least one extremal domain for fixed values of $\specParam$ and $\gamma$. 
In Section~\ref{sec:Convergence} we establish that for $\gamma\geq 1$ any sequence of extremal domains has a convergent subsequence, and show that under an additional assumption on the class $\A$ any limit point of the sequence must be a minimizer of the perimeter. 
In Section~\ref{sec:sums} we show that the tools developed to prove our main theorems also allow us to deduce the corresponding results when minimizing the sum of the first $m$ eigenvalues among convex domains. Section~\ref{sec:Asymptotics} is devoted to studying the asymptotic behaviour of~\eqref{eq:Riesz_mean_def} as $\specParam\to \infty$, and proving that the assumption from Section~\ref{sec:Convergence} holds true in~$\mathcal{P}_m$. That the same assumption is true in~$\K^n_\omega$ is a consequence of the results in~\cite{MR2885166,MR2994205} (see Lemma~\ref{lem:Asymptotic_Komega}). We end the paper by proving that our results generalize to the case when the admissible domains are allowed to consist of disjoint unions of convex domains, see Section~\ref{sec:Disjoint_unions}.


\section{Notation and preliminaries}\label{sec:Prel}

We denote by $\dist(\, \cdot \,, \cdot\,)$ the distance between two sets in $\R^n$ (possibly singletons):
\begin{equation}
  \dist(A, B) := \inf_{x\in A, y\in B}|x-y|.
\end{equation}
We will in several places make use of the fact that if $\partial\Omega$ is Lipschitz regular then $\dist(\,\cdot\,, \partial\Omega)$ satisfis
\begin{equation}\label{eq:Eikonal}
  |\grad\! \dist(x, \partial\Omega)|=1
\end{equation}
for almost every $x\in \R^n$. In particular this holds true as soon as $\Omega$ is convex.

\subsection{Preliminary convex geometry}\label{sec:Prel_convex}
We continue by recalling some basic definitions from convex geometry and introducing the notation we use.
For more details and a general treatment of classical convex geometry we refer the reader to the books~\cite{Gruber,Schneider1}.

Let $\Omega \in \K^n$. We define the \emph{inradius}, \emph{diameter}, and \emph{minimal width} of $\Omega$ by
\begin{equation}
  r(\Omega):=\sup_{x\in \Omega}\dist(x, \Omega^c), \quad D(\Omega):=\sup_{x, y\in \Omega}|x-y|, \quad \mbox{resp.} \quad w(\Omega):= \inf_{\nu\in\S^{n-1}}\bigl(\sup_{x\in\Omega} x\cdot \nu-\inf_{x\in\Omega\vphantom{\tilde \Omega}}x\cdot \nu\bigr).
\end{equation}
We note that $r$ is the radius of the largest ball contained in $\Omega$, and $w$ is the smallest distance such that $\Omega$ is contained between two parallel hyperplanes separated by this distance.

Clearly $2r(\Omega)\leq w(\Omega)$. Less clear is that also a reversed inequality holds~\cite{Steinhagen}: There exists a dimensional constant $c>0$ such that, for all $\Omega \in \K^n$,
\begin{equation}\label{eq:Steinhagen}
  c w(\Omega) \leq r(\Omega).
\end{equation}

The \emph{inner} and \emph{outer parallel sets} of $\Omega$ at distance $t\in (0, \infty)$ are defined by
\begin{align}
   \Omega_t &:= \Omega \sim B_t = \{x\in \Omega: \dist(x, \Omega^c)>t\},\\
   \Omega^t &:= \Omega + B_t = \{x \in \R^n: \dist(x, \Omega)<t\}.
 \end{align} 
The notation $+, \sim$ comes from the concepts of Minkowski addition and the Minkowski difference~\cite{Schneider1}.

We let $W\colon (\K^n)^n \to \R$ denote the unique symmetric function (with respect to permutations of its arguments) such that
\begin{equation}\label{eq:Steinerformula}
  |\eta_1\Omega_1 + \ldots + \eta_m\Omega_m|= \sum_{j_1=1}^m\cdots \sum_{j_n=1}^m \eta_{j_1}\!\cdots \eta_{j_n}W(\Omega_{j_1}, \ldots, \Omega_{j_n}),
\end{equation}
for any $\Omega_{1}, \ldots, \Omega_m \in \K^n$ and $\eta_1, \ldots, \eta_m\geq 0$~\cite{Schneider1}. The quantity $W(\Omega_1, \ldots, \Omega_n)$ is called the \emph{mixed volume} of $\Omega_1, \ldots, \Omega_n \in \K^n$. Here we will only need the following elementary properties of~$W$ (see~\cite{Schneider1}):
\begin{enumerate}[label=\arabic*.]
  \item $W(\Omega_1, \ldots, \Omega_n)>0$ for $\Omega_1, \ldots, \Omega_n \in \K^n$.
  \item $W$ is a multilinear function with respect to Minkowski addition.
  \item $W$ is increasing with respect to inclusions.
  \item $W$ is invariant under translations in each argument.
  \item The volume and perimeter of $\Omega\in \K^n$ can be written in terms of $W$:
  \begin{equation}
    |\Omega|=W(\Omega, \ldots, \Omega) \quad \mbox{and} \quad  |\partial\Omega|= n W(\Omega, \ldots, \Omega, B_1).
  \end{equation}
\end{enumerate}
 
In what follows we shall need to bound certain of these quantities in terms of others; these and similar bounds can be found in the literature but we include proofs for completeness. To this end we recall the main result of~\cite{LarsonJFA}: For $t\geq 0$ and any $\Omega \in \K^n$ it holds that
\begin{equation}\label{eq:bound_per_innerparallel}
  |\partial\Omega_t|\geq |\partial\Omega|\Bigl(1- \frac{t}{r(\Omega)}\Bigr)^{n-1}_\limplus.
\end{equation}
Since the measure of the perimeter of convex sets is decreasing under set inclusion we also have that $|\partial\Omega_t|\leq |\partial\Omega|$.

Using the co-area formula and~\eqref{eq:Eikonal} we have that
\begin{equation}
  |\Omega|= \int_{0}^{r(\Omega)}|\partial\Omega_t|\,dt.
\end{equation}
By the upper, respectively lower, bound on $|\partial\Omega_t|$ above we find, after integrating and rearranging, that
\begin{equation}\label{eq:twosided_inradius_bound}
  \frac{|\Omega|}{|\partial\Omega|}\leq r(\Omega)\leq n\frac{|\Omega|}{|\partial\Omega|}.
\end{equation}

Furthermore, it is not difficult to deduce an upper bound for $D(\Omega)$ in terms of $r(\Omega)$ and~$|\Omega|$: After translation and rotation we may assume that the ball $B_{r(\Omega)}(0)\subset \Omega$ and that $x_0=(0, \ldots, 0, R)\in\Omega$. By convexity the cone $V$ with vertex $x_0$ and base $\{x\in \R^n: x_n=0,\ x_1^2+\ldots + x_{n-1}^2= r(\Omega)^2\}$ is contained in $\Omega$. The volume of this cone is equal to
\begin{equation}
  |V| = c r(\Omega)^{n-1} \int_{0}^R \Bigl(1- \frac{x_n}{R}\Bigr)^{n-1}\,dx_n = c r(\Omega)^{n-1}R.
\end{equation}
Thus we have a contradiction if $c r(\Omega)^{n-1}R \geq |\Omega|$ and hence $R\leq c \frac{|\Omega|}{r(\Omega)^{n-1}}$. Consequently there is a constant $c>0$, depending only on $n$, such that
\begin{equation}\label{eq:diameter_bound}
  D(\Omega) \leq c \frac{|\Omega|}{r(\Omega)^{n-1}}.
\end{equation}

\subsection{Weyl asymptotics}\label{sec:Prel_Weyl}
From the classical Weyl asymptotics for the Dirichlet eigenvalues (see~\cite{WeylAsymptotic}) it follows that the Riesz means for $\gamma\geq 0$ obey the asymptotic formula
\begin{equation}
   \Tr(-\Delta_\Omega-\specParam)^\gamma_\limminus = \LT{\gamma}{n} |\Omega| \specParam^{\gamma+n/2} + o(\specParam^{\gamma+n/2}) \quad \mbox{as }\specParam \to \infty.
\end{equation} 
Here $\Omega\subset \R^n$ is a bounded and open set and $\LT{\gamma}{n}$ denotes the semi-classical Lieb--Thirring constant:
\begin{equation}
  \LT{\gamma}{n} = \frac{\Gamma(\gamma+1)}{(4\pi)^{n/2}\Gamma(\gamma+1+n/2)}.
\end{equation}

If in addition $\Omega$ satisfies certain regularity properties the following two-term asymptotic formula holds:
\begin{equation}\label{eq:Two_Term_Weyl}
  \Tr(-\Delta_\Omega-\specParam)^\gamma_\limminus = \LT{\gamma}{n} |\Omega| \specParam^{\gamma+n/2} - \frac{\LT{\gamma}{n-1}}{4}|\partial\Omega|\specParam^{\gamma+(n-1)/2} +  o(\specParam^{\gamma+(n-1)/2}), 
\end{equation}
as $\specParam \to \infty$. This refined asymptotic formula was conjectured by Weyl in~\cite{WeylAsymptotic}. 

Under the sole assumption of convexity we prove that the asymptotic behaviour does not lie below that suggested by the Weyl conjecture:
\begin{lemma}[One-sided two-term asymptotics]\label{lem:one_sided_two_term_asymptotics}
  Let\/ $\Omega\in \K^n$. Then, for $\gamma \geq 1$, 
  \begin{equation}\label{eq:lem_onesided_asymptotics}
    \Tr(-\Delta_\Omega-\specParam)^\gamma_\limminus \geq \LT{\gamma}{n} |\Omega| \specParam^{\gamma+n/2} - \frac{1}{4}\LT{\gamma}{n-1}|\partial\Omega|\specParam^{\gamma+(n-1)/2} +  o(\specParam^{\gamma+(n-1)/2}), 
\end{equation}
as $\specParam \to \infty$. Moreover, the error term is uniform on compact subsets of $\K^n$.
\end{lemma}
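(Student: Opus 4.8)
The plan is to construct, for each $\Omega\in\K^n$, an explicit trial state (via bracketing by cubes from the inside) whose energy matches the right-hand side of~\eqref{eq:lem_onesided_asymptotics}, exploiting the variational (max–min) characterization together with the fact that the Riesz mean for $\gamma\ge 1$ is monotone under domain inclusion and superadditive under decompositions into disjoint pieces. Concretely, fix a small parameter $h>0$ and tile $\R^n$ by a grid of cubes $Q$ of side length $h$. Let $\Omega^{(h)}$ be the union of all grid cubes $Q$ with $\overline Q\subset\Omega$ (the \emph{interior grid approximation}). Since $H^1_0(\Omega^{(h)})\subset H^1_0(\Omega)$ after extension by zero, and since $\Omega^{(h)}$ is a disjoint union of cubes, we get
\begin{equation}
  \Tr(-\Delta_\Omega-\specParam)^\gamma_\limminus \;\ge\; \Tr(-\Delta_{\Omega^{(h)}}-\specParam)^\gamma_\limminus \;=\; \sum_{Q\subset\Omega}\Tr(-\Delta_{Q}-\specParam)^\gamma_\limminus,
\end{equation}
because Dirichlet eigenvalues only increase under this kind of decoupling (Dirichlet bracketing), and the Riesz mean of a direct sum is the sum of the Riesz means. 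On a cube of side $h$ the eigenvalues are explicit, so $\Tr(-\Delta_Q-\specParam)^\gamma_\limminus$ is a completely explicit lattice-point sum in $\specParam h^2$.

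The next step is to count the cubes. The number of cubes in $\Omega^{(h)}$ is $h^{-n}|\Omega^{(h)}| = h^{-n}(|\Omega| - |\Omega\setminus\Omega^{(h)}|)$, and $\Omega\setminus\Omega^{(h)}$ is contained in a neighbourhood of $\partial\Omega$ of width $\sqrt n\,h$; by convexity (e.g. using the Steiner-type bound $|\Omega^t\setminus\Omega|\le |\partial\Omega|\,t + O(t^2)$ from Section~\ref{sec:Prel_convex}) this is $|\partial\Omega|\sqrt n\,h + O(h^2)$, with constants uniform on compact subsets of $\K^n$. Plugging in the single-cube asymptotics $\Tr(-\Delta_Q-\specParam)^\gamma_\limminus = \LT{\gamma}{n}h^n\specParam^{\gamma+n/2}(1+o(1))$ as $\specParam h^2\to\infty$ and summing over the $\approx h^{-n}|\Omega|$ cubes recovers the Weyl leading term $\LT{\gamma}{n}|\Omega|\specParam^{\gamma+n/2}$; the discarded boundary layer accounts for a loss of order $|\partial\Omega|\,h\,\specParam^{\gamma+n/2}$. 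To make the second term come out correctly one chooses $h=h(\specParam)$ decaying like a suitable negative power of $\specParam$: taking $h\sim \specParam^{-1/2+\alpha}$ for small $\alpha>0$ makes $\specParam h^2\to\infty$ (so the one-cube asymptotics apply) while the boundary-layer loss $|\partial\Omega|\,h\,\specParam^{\gamma+n/2}\sim |\partial\Omega|\,\specParam^{\gamma+n/2-1/2+\alpha}$ is of the order $\specParam^{\gamma+(n-1)/2}$ up to the harmless factor $\specParam^\alpha$; a more careful two-parameter optimization (or iterating the cube estimate, using the exact single-cube Riesz sum rather than just its leading term, and balancing the error in the one-cube Weyl formula against the number of cubes) pins down the sharp constant $\tfrac14\LT{\gamma}{n-1}$. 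This is exactly the place where $\gamma\ge 1$ is used: for $\gamma\ge 1$ the single-cube Riesz mean $\specParam\mapsto \Tr(-\Delta_Q-\specParam)^\gamma_\limminus$ is $C^1$ and its remainder in the two-term expansion is controlled well enough (Lipschitz-type bounds on the fractional part contributions) to survive summation over $h^{-n}$ cubes; for $\gamma<1$ the oscillatory remainder is too large.

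For uniformity on compact subsets $\mathcal{C}\subset\K^n$: all quantities entering the estimate — $|\Omega|$, $|\partial\Omega|$, the inradius $r(\Omega)$ and diameter $D(\Omega)$ (controlled via~\eqref{eq:twosided_inradius_bound} and~\eqref{eq:diameter_bound}), and the width of the boundary layer — are continuous functions on $\K^n$ and hence bounded above and below on $\mathcal{C}$ (a compact family of convex bodies is uniformly bounded and, by the convergence $\Omega_j\to\Omega$ forcing $|\partial\Omega_j|\to|\partial\Omega|$, has uniformly bounded perimeter; moreover the limit has nonempty interior so inradii are bounded below). Therefore the $o(\specParam^{\gamma+(n-1)/2})$ error, which depends on $\Omega$ only through these finitely many geometric quantities, can be bounded uniformly over $\mathcal{C}$. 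The main obstacle is the sharp-constant bookkeeping in the previous paragraph: one must keep the \emph{exact} one-cube Riesz sum through the argument and carefully match the boundary contribution, rather than merely discarding a boundary layer, to land precisely on $\tfrac14\LT{\gamma}{n-1}|\partial\Omega|$ and not just an inequality with a worse constant — in particular the factor $\tfrac14$ reflects the Dirichlet (as opposed to Neumann) nature of the boundary condition on each cube face and must be tracked through the lattice-point count. An alternative that avoids this is to invoke a known sharp lower bound of Berezin–Li–Yau improved type (cf. the inequalities referenced around Theorem~\ref{thm:Improved_Berezin_Convex} and~\cite{GeisingerLaptevWeidl}), which for convex $\Omega$ already gives the two-term lower bound with the correct constant up to lower order; then~\eqref{eq:lem_onesided_asymptotics} follows by feeding the geometric bounds of Section~\ref{sec:Prel_convex} into that inequality and checking the remainder is $o(\specParam^{\gamma+(n-1)/2})$ uniformly on compacta.
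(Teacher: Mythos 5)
Your reduction to cubes via Dirichlet bracketing gives the correct direction of inequality and recovers the leading Weyl term, but it cannot produce the sharp second term, and no choice of $h(\specParam)$ or ``more careful bookkeeping'' will fix this. The obstruction is quantitative: each interior cube $Q$ of side $h$ carries its \emph{own} boundary correction, $\Tr(-\Delta_Q-\specParam)_\limminus^\gamma = \LT{\gamma}{n}h^n\specParam^{\gamma+n/2}-\tfrac14\LT{\gamma}{n-1}(2nh^{n-1})\specParam^{\gamma+(n-1)/2}+\ldots$, and summing over the $\approx h^{-n}|\Omega|$ cubes gives a total second term of order $-h^{-1}|\Omega|\,\specParam^{\gamma+(n-1)/2}$, because the total area of the artificial internal interfaces is $\approx 2n\,h^{-1}|\Omega|\to\infty$. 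For the single-cube asymptotics to apply you need $\specParam h^2\to\infty$, i.e.\ $h^{-1}=o(\specParam^{1/2})$; then the interface loss is $o(|\Omega|\specParam^{\gamma+n/2})$ but still $\gg|\partial\Omega|\specParam^{\gamma+(n-1)/2}$, while the boundary-layer loss in the volume term, $|\partial\Omega|\,h\,\specParam^{\gamma+n/2}=|\partial\Omega|(h\specParam^{1/2})\specParam^{\gamma+(n-1)/2}$, is simultaneously $\gg\specParam^{\gamma+(n-1)/2}$. The two requirements $h\specParam^{1/2}\to 0$ and $h\specParam^{1/2}\to\infty$ are incompatible: this is exactly the classical reason why bracketing yields only one-term Weyl asymptotics. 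Your closing fallback does not rescue the argument either: the inequalities around Theorem~\ref{thm:Improved_Berezin_Convex} and in~\cite{GeisingerLaptevWeidl} are \emph{upper} bounds with a \emph{non-sharp} constant in the boundary term, whereas the lemma requires a lower bound with the sharp constant $\tfrac14\LT{\gamma}{n-1}$; such a universal two-term lower bound for arbitrary convex domains is precisely what is being proved and cannot simply be quoted.

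The paper's proof avoids artificial interfaces altogether. It approximates $\Omega$ from inside by the $C^{1,1}$-regular convex bodies $\Omega(\eps)=(\Omega\sim B_\eps)+B_\eps$ (so that monotonicity gives $\Tr(-h^2\Delta_\Omega-1)_\limminus\geq\Tr(-h^2\Delta_{\Omega(\eps)}-1)_\limminus$, with $|\Omega(\eps)|=|\Omega|+O(\eps^2)$ and $|\partial\Omega(\eps)|=|\partial\Omega|+O(\eps)$ uniformly on compacta), and then applies the Frank--Geisinger localization: a smooth partition of unity $\phi_u$ on balls of a variable length scale $l(u)$, for which the localization error is $O(l(u)^{-2}h^{-n+2})$ per unit volume --- two orders down in $h$ rather than one, which is what a sharp partition into cubes cannot achieve --- combined with a local two-term estimate near the ($\eps^{-1}$-Lipschitz-graph) boundary and a local Berezin bound in the bulk. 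Balancing the three parameters ($l_0=ch^{6/7}$, $\eps=h^{4/7}$) yields the sharp second term with a uniform $o(h^{-n+1})$ remainder, and Aizenman--Lieb lifts the result from $\gamma=1$ to $\gamma>1$. If you want to salvage a bracketing-style argument, you would have to replace the hard cube walls by such a smooth localization; as written, your construction proves only the one-term asymptotics.
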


In~\cite{MR575202} Ivrii proved that~\eqref{eq:Two_Term_Weyl} holds for $\gamma=0$ under the assumptions that $\partial\Omega$ is smooth and the measure of the periodic billiards in $\Omega$ is zero. By the Aizenman--Lieb identity it follows that the expansion holds for all $\gamma>0$ under the same assumptions. More recently, Frank and Geisinger proved that~\eqref{eq:Two_Term_Weyl} is true for $\gamma=1$ if the boundary of $\Omega$ is $C^{1, \alpha}$-regular~\cite{MR2885166}. In~\cite{MR2994205} the same authors treat the case of Robin boundary conditions and show that their method also covers $C^1$-domains. Again the Aizenman--Lieb identity implies that~\eqref{eq:Two_Term_Weyl} is valid under the same assumptions for all $\gamma\geq 1$. In particular, the results of~\cite{MR2885166,MR2994205} imply that the expansion~\eqref{eq:Two_Term_Weyl} holds uniformly on compact subsets of~$\K_\omega^n$.

\begin{lemma}[{\cite[Theorem~1.1]{MR2885166}, \cite[Theorem~1.3]{MR2994205}}]\label{lem:Asymptotic_Komega}
  Let\/ $\Omega\in \K^n_\omega$. Then, for $\gamma \geq 1$,
  \begin{equation}
     \Tr(-\Delta_\Omega-\specParam)^\gamma_\limminus = \LT{\gamma}n |\Omega| \specParam^{\gamma+n/2}-\frac{\LT{\gamma}{n-1}}{4}|\partial \Omega| \specParam^{\gamma+(n-1)/2}+o(\specParam^{\gamma+(n-1)/2}),
  \end{equation}
  as $\specParam \to \infty$. Moreover, the error term is uniform on compact subsets of $\K_\omega^n$.
\end{lemma}

That the error term in the above expansion is uniform on compact subsets follows from the methods of Frank and Geisinger, in fact the uniform $C^1$-modulus of continuity of $\partial\Omega$ together with upper and lower bounds on $|\Omega|$ and $|\partial\Omega|$ suffices. This uniformity is not explicitly stated in their results but it is nonetheless possible to track the geometric dependence through their proof and conclude that this is the case. However, this is not an entirely trivial task. To see how this can be done we refer the reader to~\cite{FrankLarson} where the same construction is used and the error term is tracked explicitly.

In Section~\ref{sec:Asymptotics} we shall prove that~\eqref{eq:Two_Term_Weyl} holds uniformly also for $\Omega$ in compact subsets of~$\mathcal{P}_m$.
\begin{lemma}\label{lem:Asymptotic_Polytopes}
  Let\/ $\Omega\in \mathcal{P}_m$. Then, for $\gamma \geq 1$,
  \begin{equation}
     \Tr(-\Delta_\Omega-\specParam)^\gamma_\limminus = \LT{\gamma}n |\Omega| \specParam^{\gamma+n/2}-\frac{\LT{\gamma}{n-1}}{4}|\partial \Omega| \specParam^{\gamma+(n-1)/2}+o(\specParam^{\gamma+(n-1)/2}),
  \end{equation}
  as $\specParam \to \infty$. Moreover, the error term is uniform on compact subsets of $\mathcal{P}_m$.
\end{lemma}

The reason that we here need to further restrict our admissible classes of convex domains is that, prior to the results in~\cite{FrankLarson},~\eqref{eq:Two_Term_Weyl} was not known to hold uniformly in compact subsets of $\K^n$, for $\gamma \geq 1$. 

The refined asymptotics~\eqref{eq:Two_Term_Weyl} combined with the isoperimetric inequality indicates that if we can prove that an asymptotically optimal shape exists, it is likely the ball. This is indeed the heuristic idea behind the belief that maximizers of the Riesz means, or for that matter minimizers of the eigenvalues, should be well behaved in the limit $\specParam\to \infty$.

\subsection{A two-term Berezin inequality}\label{sec:Prel_Berezin}
A key ingredient in our proof here will be the following two-term bound for the Riesz means of order $\gamma\geq 1$ when $\Omega\subset \R^n$ is convex. This result was first obtained for $\gamma\geq 3/2$ in the planar case in~\cite{GeisingerLaptevWeidl} under an additional geometric assumption. In~\cite{LarsonJFA} it was proved that this additional assumption was true in general, and in~\cite{LarsonPAMS} this was used to generalize the bound for $\gamma\geq 3/2$ to any dimension and arbitrary convex domains. As we shall see, the extension to $1\leq \gamma<3/2$ can be obtained by combining results of~\cite{FrankLarson} and~\cite{KovarikWeidl}. 

\begin{theorem}\label{thm:Improved_Berezin_Convex}
Let\/ $\Omega\in \K^n$. For $\gamma\geq 1$ there exists a constant $c(\gamma, n)>0$ such that
\begin{align}
  \textrm{if } \specParam \leq \frac{\pi^2}{4 r(\Omega)^2}: \quad\Tr(-\Delta_\Omega-\specParam)^\gamma_\limminus =0, \hspace{208.5pt}\\
  \textrm{if } \specParam > \frac{\pi^2}{4 r(\Omega)^2}:\quad  \Tr(-\Delta_\Omega-\specParam)^\gamma_\limminus \leq \LT{\gamma}{n} |\Omega| \specParam^{\gamma+n/2} - c(\gamma, n) \LT{\gamma}{n-1} |\partial \Omega| \specParam^{\gamma+(n-1)/2}. 
  \end{align}
\end{theorem}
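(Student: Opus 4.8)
The plan is to split the statement into its two regimes and, within the non-trivial one, to split further at $\gamma=3/2$, which is exactly where the available one-dimensional estimates change character; the three resulting pieces match the three groups of cited results. Throughout I abbreviate $\Tr(-\Delta_\Omega-\specParam)^\gamma_\limminus$ by $R_\gamma(\Omega)$.

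\emph{The vanishing regime.} If $\specParam\le \pi^2/(4r(\Omega)^2)$ I claim the sum in~\eqref{eq:Riesz_mean_def} is empty, i.e.\ $\lambda_1(\Omega)\ge \pi^2/(4r(\Omega)^2)$ for every convex $\Omega$. This is classical (Hersch's inequality when $n=2$), but it also falls out of the second regime: once one has, for some $\gamma_0\ge 1$, a bound of the form $R_{\gamma_0}(\Omega)\le \LT{\gamma_0}{n}\int_\Omega\bigl(\specParam-\pi^2/(4\dist(x,\partial\Omega)^2)\bigr)^{\gamma_0+n/2}_\limplus\,dx$, then for $\specParam\le\pi^2/(4r(\Omega)^2)$ the integrand vanishes identically since $\dist(x,\partial\Omega)\le r(\Omega)$ on $\Omega$; hence $R_{\gamma_0}(\Omega)=0$, and if the sum defining $R_{\gamma_0}$ is empty so is the one defining $R_\gamma$ for every $\gamma\ge 0$. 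It therefore remains to prove the second displayed inequality when $\specParam>\pi^2/(4r(\Omega)^2)$.

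\emph{The range $\gamma\ge 3/2$.} Here I would follow Geisinger--Laptev--Weidl. Starting from the Berezin inequality (testing $(-\Delta_\Omega-\specParam)^\gamma_\limminus$ against plane waves restricted to $\Omega$), one localises the eigenfunctions into a boundary layer and replaces the Dirichlet condition there by an effective one-dimensional confining potential of size $\pi^2/(4\dist(x,\partial\Omega)^2)$; crucially this replacement can be carried out with the \emph{semiclassical} constant because the sharp one-dimensional Lieb--Thirring inequality is available precisely for $\gamma\ge 3/2$. The result is a bound of the shape $R_\gamma(\Omega)\le \LT{\gamma}{n}\int_\Omega\bigl(\specParam-\pi^2/(4\dist(x,\partial\Omega)^2)\bigr)^{\gamma+n/2}_\limplus\,dx$. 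Now expand the right-hand side: the leading term is $\LT{\gamma}{n}|\Omega|\specParam^{\gamma+n/2}$, and since $\pi^2/4>1$ the integrand equals $\specParam^{\gamma+n/2}$ on the boundary layer $\{\dist(\,\cdot\,,\partial\Omega)<\specParam^{-1/2}\}$, so the remainder is at least $\specParam^{\gamma+n/2}\bigl(|\Omega|-|\Omega_s|\bigr)$ with $s=\specParam^{-1/2}$. By the co-area formula and~\eqref{eq:Eikonal} one has $|\Omega|-|\Omega_s|=\int_0^s|\partial\Omega_t|\,dt$, and the inner-parallel perimeter inequality~\eqref{eq:bound_per_innerparallel} gives $\int_0^s|\partial\Omega_t|\,dt\ge |\partial\Omega|\,s\,(1-s/r(\Omega))^{n-1}_\limplus$. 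The hypothesis $\specParam>\pi^2/(4r(\Omega)^2)$ gives $r(\Omega)\sqrt{\specParam}>\pi/2$, hence $s/r(\Omega)<2/\pi$ and $(1-s/r(\Omega))^{n-1}\ge(1-2/\pi)^{n-1}$, a constant depending only on $n$. This yields the asserted deficit $c(\gamma,n)\LT{\gamma}{n-1}|\partial\Omega|\specParam^{\gamma+(n-1)/2}$ with $c(\gamma,n)>0$. (It suffices to run this for $\gamma=3/2$; the Aizenman--Lieb identity then lifts the bound to all $\gamma\ge 3/2$.)

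\emph{The range $1\le\gamma<3/2$.} Here the argument above fails, because the sharp one-dimensional Lieb--Thirring constant in this range is not the semiclassical one, so inserting the effective boundary potential would spoil either the leading constant or the improvement. Instead I would invoke the inequality of Harrell and Stubbe, which bounds $R_\gamma(\Omega)$ for a convex $\Omega$ by the corresponding quantity for a box $Q=(0,a_1)\times\cdots\times(0,a_n)$, thereby reducing the matter to this product case; for boxes the eigenvalues are explicit and the required two-term bound $R_\gamma(Q)\le \LT{\gamma}{n}|Q|\specParam^{\gamma+n/2}-c(\gamma,n)\LT{\gamma}{n-1}|\partial Q|\specParam^{\gamma+(n-1)/2}$ is exactly what Gittins--Larson prove by a lattice-point comparison. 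Substituting this back completes the proof in the remaining range.

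\emph{Main obstacle.} The genuinely hard inputs are imported: the Berezin-type bound carrying the effective boundary potential \emph{together with} the correct semiclassical leading constant (\cite{GeisingerLaptevWeidl,LarsonPAMS}), the inner-parallel perimeter inequality~\eqref{eq:bound_per_innerparallel} (\cite{LarsonJFA}), and the sharp two-term bound for boxes when $1\le\gamma<3/2$ (\cite{HarrellStubbe_16,GittinsLarson}). The obstacle one should anticipate, were one to attempt this from scratch, is precisely that the most natural route — a one-dimensional Dirichlet estimate near the boundary — only keeps the semiclassical leading constant for $\gamma\ge 3/2$, which is why the range $1\le\gamma<3/2$ has to be rerouted through the box case. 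Granting the imported estimates, what remains is bookkeeping: expanding the effective-potential integral, applying~\eqref{eq:bound_per_innerparallel} on a layer of thickness $\specParam^{-1/2}$, and checking — this is the only place the hypothesis $\specParam>\pi^2/(4r(\Omega)^2)$ is used — that the constant $c(\gamma,n)$ so obtained is positive uniformly over all convex $\Omega$, and in particular does not degenerate as $\Omega$ becomes thin or elongated.
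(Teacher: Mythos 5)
Your proposal is correct in substance, but it only partially coincides with the paper's proof. The paper does \emph{not} split at $\gamma=3/2$: it runs the Harrell--Stubbe route uniformly for all $\gamma\geq 1$, enclosing $\Omega$ in a box $R=(0,2w(\Omega))\times(0,a_2)\times\cdots\times(0,a_n)$, invoking the density comparison $\Tr(-\Delta_\Omega-\specParam)_\limminus\leq \tfrac{|\Omega|}{|R|}\Tr(-\Delta_R-\specParam)_\limminus$ (lifted to $\gamma\geq1$ by Aizenman--Lieb), inserting the three-term bound for boxes from~\cite{GittinsLarson}, and converting the resulting deficit $\sim|\Omega|/w(\Omega)$ into $|\partial\Omega|$ via Steinhagen's inequality~\eqref{eq:Steinhagen} and~\eqref{eq:twosided_inradius_bound}; the hypothesis $\specParam>\pi^2/(4r(\Omega)^2)$ is used only to absorb the third term of the box bound into the second. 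Your treatment of $1\leq\gamma<3/2$ is therefore the paper's argument for the whole range, except that you gloss over exactly the two points the paper spells out: the factor $|\Omega|/|R|$ in the Harrell--Stubbe comparison (without it the leading term would be $\LT{\gamma}{n}|R|\specParam^{\gamma+n/2}$, which is too large) and the passage from the box's boundary term to $|\partial\Omega|$. Your separate route for $\gamma\geq3/2$ via the effective boundary potential of~\cite{GeisingerLaptevWeidl,LarsonPAMS} together with~\eqref{eq:bound_per_innerparallel} on a layer of thickness $\specParam^{-1/2}$ is the historically earlier proof and is also valid (note one slip of wording: on the layer $\{\dist(x,\partial\Omega)<\specParam^{-1/2}\}$ the integrand $(\specParam-\pi^2/(4\dist(x,\partial\Omega)^2))_\limplus^{\gamma+n/2}$ \emph{vanishes}, so that the deficit against $\specParam^{\gamma+n/2}$ is total there --- your subsequent estimate of the remainder by $\specParam^{\gamma+n/2}(|\Omega|-|\Omega_s|)$ is the correct continuation). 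What your hybrid buys is a sharper constant and a more geometric second term in the range $\gamma\geq3/2$; what the paper's unified argument buys is brevity and a single mechanism, at the cost of a non-sharp constant $c(\gamma,n)$ --- which is all that is needed for the asymptotic shape optimization.
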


\begin{proof}[Proof of Theorem~\ref{thm:Improved_Berezin_Convex}]
The first part of the theorem is a direct consequence of the bound $\lambda_1(\Omega)\geq \frac{\pi^2}{4r(\Omega)^2}$ proved in \cite{Hersch,MR589137}. For the second part, Theorem 1.2 in \cite{FrankLarson} implies that there exist universal constants $c_1, c_2>0$ such that
\begin{equation}\label{eq: frank larson bound}
  \Tr(-\Delta_\Omega-\Lambda)_\limminus \leq \LT{1}{n}|\Omega|\Lambda^{1+n/2}- c_2\LT{1}{n-1}|\partial\Omega|\Lambda^{1+(n-1)/2}, \quad \mbox{for all }\Lambda \geq \frac{c_1}{r(\Omega)^2}.
\end{equation}
In order to extend~\eqref{eq: frank larson bound} to all 
$\Lambda \geq \frac{\pi^2}{4r(\Omega)^2}$ we recall Theorem~3.5 in~\cite{KovarikWeidl} which for $\Omega \in \K^n$ implies that
\begin{equation}
  \Tr(-\Delta_\Omega-\Lambda)_\limminus\leq \LT{1}n |\Omega|\Lambda^{1+n/2}- \frac{\LT{1}n}{72}r(\Omega)^{-3/2}|\Omega|\Lambda^{n/2+1/4}, \quad \mbox{for all }\Lambda \geq \frac{\pi^2}{4r(\Omega)^2}.
\end{equation}
Note that this is almost what we aim to prove but the negative correction term on the right-hand side differs from that in~\eqref{eq: frank larson bound} both in terms of the dependence on $\Omega$ and in the power of $\Lambda$. However, provided the quantity $r(\Omega)^2\Lambda$ remains bounded the two correction terms can be related as follows.

We claim that there is a universal constant $c_3>0$ such that
\begin{equation*}
  c_3\LT{1}{n-1}|\partial\Omega|\Lambda^{1+(n-1)/2}\leq \frac{\LT{1}n}{72}r(\Omega)^{-3/2}|\Omega|\Lambda^{n/2+1/4}\quad \mbox{for } \frac{\pi^2}{4r(\Omega)^2}\leq \Lambda \leq \frac{c_1}{r(\Omega)^2},
\end{equation*}
which allows us to extend~\eqref{eq: frank larson bound} to all $\Lambda \geq \frac{\pi^2}{4r(\Omega)^2}$ (possibly with a smaller constant $c_2$). The existence of such a constant follows from 
\begin{equation}
  \inf_{\Omega \in \K^n} \inf_{\frac{\pi^2}{4r(\Omega)^2}\leq \Lambda \leq \frac{c_1}{r(\Omega)^2}} \frac{|\Omega|}{|\partial\Omega|r(\Omega)^{3/2}}\Lambda^{-1/4} = \inf_{\Omega \in \K^n} \frac{|\Omega|c_1^{-1/4}}{|\partial\Omega|r(\Omega)} \geq \frac{1}{nc_1^{1/4}},
\end{equation}
where we in the second step used~\eqref{eq:twosided_inradius_bound}. This completes the proof of Theorem~\ref{thm:Improved_Berezin_Convex} for $\gamma=1$.

To prove the case $\gamma>1$ we apply the Aizenman--Lieb identity~\cite{AizenmanLieb} (see also~Section~\ref{sec:Asymptotics}). For all $\Lambda \geq 0$ and $b\in [0, 1]$ the case $\gamma =1$ implies that
\begin{align*}
  \Tr(-\Delta_\Omega -\Lambda)_\limminus 
  &\leq 
  \Bigl(\LT{1}n |\Omega|\Lambda^{1+n/2}- c\LT{1}{n-1}|\partial\Omega|\Lambda^{1+(n-1)/2}\Bigr)_\limplus\\
  &\leq
  \LT{1}n|\Omega|\Lambda^{1+n/2}\Bigl(1- \frac{bc \LT{1}{n-1}}{\LT1n}\frac{|\partial\Omega|}{|\Omega|}\Lambda^{-1/2}+\frac{b^2c^2(\LT{1}{n-1})^2}{4(\LT1n)^2}\frac{|\partial\Omega|^2}{|\Omega|^2}\Lambda^{-1}\Bigr)_\limplus\\
  &=
  \LT{1}n|\Omega|\Lambda^{1+n/2}\Bigl(1- \frac{bc \LT{1}{n-1}}{2\LT1n}\frac{|\partial\Omega|}{|\Omega|}\Lambda^{-1/2}\Bigr)^2\\
  &=
  \LT1n |\Omega|\Lambda^{1+n/2}- c_1 b \LT{1}{n-1}|\partial\Omega|\Lambda^{1+(n-1)/2}+ c_2b^2 \LT{1}{n-2}\frac{|\partial\Omega|^2}{|\Omega|}\Lambda^{1+(n-2)/2},
\end{align*}
for some positive constants $c_1, c_2$. Here the positive part in the first inequality is necessary as we do not distinguish between the two cases of Theorem~\ref{thm:Improved_Berezin_Convex}. An application of the Aizenman--Lieb identity yields
\begin{equation}
  \Tr(-\Delta_\Omega -\Lambda)_\limminus^\gamma
  \leq
  \LT{\gamma}n |\Omega|\Lambda^{\gamma+n/2}- c_1 b \LT{\gamma}{n-1}|\partial\Omega|\Lambda^{\gamma+(n-1)/2}+ c_2b^2 \LT{\gamma}{n-2}\frac{|\partial\Omega|^2}{|\Omega|}\Lambda^{\gamma+(n-2)/2},
\end{equation}
for all $\Lambda \geq 0$ and $b\in [0, 1]$. From~\eqref{eq:twosided_inradius_bound} we have $\frac{|\partial\Omega|}{|\Omega|}\Lambda^{-1/2}\leq \frac{2n}{\pi}$ for all $\Lambda \geq \frac{\pi^2}{4r(\Omega)^2}$ and thus the claimed bound follows by choosing $b$ sufficiently small.
\end{proof}

\begin{remark}
  In the published version of this paper the proof of Theorem~\ref{thm:Improved_Berezin_Convex} was based on combining results in~\cite{HarrellStubbe_16} and~\cite{GittinsLarson}. However, the proof of the relevant inequality in~\cite{HarrellStubbe_16} was incorrect (see~\cite{HarrellStubbe_Retraction}) which resulted in a gap in our proof of Theorem~\ref{thm:Improved_Berezin_Convex} and necessitated the publication of the erratum~\cite{LarsonErratum}.
\end{remark}

The bound in Theorem~\ref{thm:Improved_Berezin_Convex} is an improvement of an inequality going back to Berezin~\cite{Berezin} which states that for the convex Riesz means, i.e.\ when $\gamma\geq 1$, the first term in the Weyl asymptotic formula always overestimates the eigenvalue mean:
\begin{equation}\label{eq:BerezinIneq}
  \Tr(-\Delta_\Omega-\specParam)^\gamma_\limminus \leq \LT{\gamma}{n}|\Omega|\specParam^{\gamma+n/2}. 
\end{equation}
This inequality should more correctly be attributed to Berezin, Lieb and Li--Yau~\cite{Berezin,MR701919,Lieb}: Berezin and Lieb both proved inequalities of which~\eqref{eq:BerezinIneq} is a special case, while Li and Yau proved an inequality for the sum of the first~$m$ eigenvalues which is equivalent to~\eqref{eq:BerezinIneq} (see~\cite{Laptev_97}). 

We emphasize that the second term appearing in the bound of Theorem~\ref{thm:Improved_Berezin_Convex} is up to a constant the same as that appearing in the refined Weyl asymptotic formula (this is essential in proving the boundedness of the maximizers).


\section{Existence of extremal domain}\label{sec:Existence}
For any fixed $\gamma\geq 1$ and $\specParam$ large enough, we have that the existence of a maximizer in the class of quasi-open sets follows from known results~\cite{MR2989451, MR1217590,  MR3506069, MR3095209}. However, the methods used in these articles do not take into account that we wish to stay within our class of convex domains. But, as this is already a very nice class of sets, proving the existence of a maximizer for our problem is not difficult. 

\begin{lemma}[Existence of maximizers]\label{lem:Existence_of_maximizer}
  Let $\A$ be a closed subset of $\K^n$. Then, for any $\gamma \geq 0$ and $\specParam \geq 0$ there exists a domain $\Omega_{\specParam, \gamma}\in \A$ realizing the supremum
  \begin{equation}\label{eq:MaxProb_Existence_thm}
    \sup\{ \Tr(-\Delta_\Omega-\specParam)^\gamma_\limminus : \Omega \in \A, |\Omega|=1\}.
  \end{equation}
   Moreover, if $\A=\K^n$, $\gamma\geq 1$, and $\specParam > \lambda_1(B)$ then any such domain has $C^1$-regular boundary.
\end{lemma}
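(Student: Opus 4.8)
The plan is to run the direct method of the calculus of variations, using the strong compactness properties of $\K^n$ under the Hausdorff metric together with the continuity (more precisely, the upper semicontinuity) of the functional $\Omega\mapsto\Tr(-\Delta_\Omega-\specParam)^\gamma_\limminus$ along Hausdorff-convergent sequences of convex sets. First I would fix $\gamma\geq 0$ and $\specParam\geq 0$, take a maximizing sequence $\{\Omega_j\}\subset\A$ with $|\Omega_j|=1$, and observe that the supremum is finite: by the Berezin inequality~\eqref{eq:BerezinIneq} (valid here since $\gamma\geq 1$; for $0\le\gamma<1$ one uses instead the elementary Weyl-type bound, or simply that only finitely many eigenvalues lie below $\specParam$ with a bound depending on $|\Omega|=1$ via Li--Yau~\eqref{eq:LiYau}) the functional is bounded above by $\LT{\gamma}{n}\specParam^{\gamma+n/2}$. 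To extract a convergent subsequence I need the $\Omega_j$ to be uniformly bounded and uniformly ``fat''. Uniform boundedness: if $r(\Omega_j)\to 0$ along a subsequence then $\lambda_1(\Omega_j)\geq \pi^2/(4r(\Omega_j)^2)\to\infty$ (by the inradius bound cited in the proof of Theorem~\ref{thm:Improved_Berezin_Convex}), so eventually the trace vanishes, contradicting that the sequence is maximizing (the sup is positive once $\specParam>\lambda_1(\Omega')$ for some competitor $\Omega'\in\A$, $|\Omega'|=1$, and nonnegative always — one handles the degenerate case $\sup=0$ separately by just picking any admissible domain). Hence $r(\Omega_j)\geq c>0$; combined with $|\Omega_j|=1$ and~\eqref{eq:diameter_bound} this gives $D(\Omega_j)\leq C$. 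After a rigid transformation (which leaves both the constraint and the functional invariant, and preserves membership in $\A$ by the standing assumption that $\A$ is closed under rigid motions) we may assume all $\Omega_j$ lie in a fixed large ball.

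By the Blaschke selection theorem, $\{\Omega_j\}$ has a subsequence converging in the Hausdorff metric to some $\Omega_\infty\in\K^n$; since $\A$ is closed, $\Omega_\infty\in\A$, and since $r(\Omega_\infty)\geq c>0$ it is a genuine domain (non-empty interior). Hausdorff convergence of convex bodies with uniformly bounded diameters and uniformly positive inradii implies convergence of the volumes, so $|\Omega_\infty|=\lim|\Omega_j|=1$. The remaining point is the semicontinuity of the spectral functional. Here I would use that for convex domains converging in Hausdorff distance with nondegenerate inradius one has, for each fixed $k$, $\lambda_k(\Omega_j)\to\lambda_k(\Omega_\infty)$: this follows from Mosco/$\gamma$-convergence of the Dirichlet forms, which for Hausdorff-convergent convex sets is classical (one can also argue directly: the inner parallel sets $\Omega_{\infty,t}\subset\Omega_j$ for $j$ large give $\limsup\lambda_k(\Omega_j)\le\lambda_k(\Omega_{\infty,t})\to\lambda_k(\Omega_\infty)$ as $t\to0$, and symmetrically $\Omega_j\subset\Omega_\infty^t$ gives the reverse inequality for the liminf). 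Since only finitely many eigenvalues below $\specParam$ contribute (uniformly in $j$, by Li--Yau), continuity of each $\lambda_k$ together with continuity of $x\mapsto(\specParam-x)^\gamma_\limminus$ gives $\Tr(-\Delta_{\Omega_j}-\specParam)^\gamma_\limminus\to\Tr(-\Delta_{\Omega_\infty}-\specParam)^\gamma_\limminus$; in fact continuity, not merely upper semicontinuity, holds. Therefore $\Omega_\infty$ realizes the supremum. I expect the main obstacle to be the spectral convergence step — making precise, with clean references or a short self-contained inner/outer parallel set argument, that Hausdorff convergence of convex domains (with inradius bounded below) forces convergence of the Dirichlet eigenvalues; everything else is standard convex geometry already assembled in Section~\ref{sec:Prel}.

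For the last assertion, suppose $\A=\K^n$, $\gamma\geq 1$, $\specParam>\lambda_1(B)$, and let $\Omega=\Omega_{\specParam,\gamma}$ be a maximizer. The strategy is to show that if $\partial\Omega$ fails to be $C^1$ at some point then one can strictly increase the functional while keeping the volume fixed, contradicting maximality. Concretely: a convex body whose boundary is not $C^1$ has a boundary point $x_0$ with more than one supporting hyperplane, i.e.\ a ``corner''. Cutting the corner with a hyperplane — replacing $\Omega$ by $\Omega\cap H^-$ for a suitable half-space $H^-$ that removes a small cap near $x_0$ — strictly decreases $|\partial\Omega|$ relative to the decrease in $|\Omega|$; after rescaling back to unit volume by a factor $t>1$, the new domain $\widetilde\Omega=t(\Omega\cap H^-)$ satisfies $|\widetilde\Omega|=1$ and has \emph{strictly smaller perimeter} than $\Omega$ for a sufficiently small cap (this is the standard fact that corner-cutting strictly reduces the isoperimetric deficit; quantitatively $|\partial(\Omega\cap H^-)|\le|\partial\Omega|-c\,\mathrm{diam}(\text{cap})^{n-1}$ while $|\Omega\setminus H^-|$ is of higher order in $n$ only if the cut is chosen shallow, so one arranges $|\partial\widetilde\Omega|<|\partial\Omega|$). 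By the lower asymptotic bound Lemma~\ref{lem:one_sided_two_term_asymptotics} applied to $\widetilde\Omega$ and the upper bound of Theorem~\ref{thm:Improved_Berezin_Convex} applied to $\Omega$, the gap in the $\specParam^{\gamma+(n-1)/2}$ coefficients between the two sets would — for $\specParam$ large — make $\Tr(-\Delta_{\widetilde\Omega}-\specParam)^\gamma_\limminus>\Tr(-\Delta_{\Omega}-\specParam)^\gamma_\limminus$. The delicate part is that the lemma only asserts existence of a maximizer for \emph{every} $\specParam>\lambda_1(B)$, not only large $\specParam$, so this asymptotic comparison does not by itself rule out corners at a fixed moderate $\specParam$; to get regularity at every such $\specParam$ one instead invokes, as the paper notes, the existing free-boundary regularity theory for maximizers of $\Tr(-\Delta_\Omega-\specParam)^\gamma_\limminus$ among quasi-open sets (for $\gamma\ge1$ and $\specParam>\lambda_1(B)$ the functional is Lipschitz in the eigenvalues and strictly monotone near a maximizer, cf.~\cite{MR3506069,MR3305655}), which yields that the optimal set is open with $C^{1}$ (indeed smooth up to a small singular set) boundary; convexity then upgrades this to $C^1$ everywhere since a convex set with no corners has $C^1$ boundary. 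I expect this regularity step to be the genuinely hard one, and would present it by citing the free-boundary results rather than reproving them.
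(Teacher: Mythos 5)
Your existence argument follows the paper's proof essentially step by step: handle the degenerate case $\sup=0$ separately, use $\lambda_1(\Omega)\geq\pi^2/(4r(\Omega)^2)$ to get a uniform lower bound on the inradius of a maximizing sequence, combine with $|\Omega_k|=1$ and~\eqref{eq:diameter_bound} to get uniform boundedness, translate, apply Blaschke selection, and pass to the limit using semicontinuity of the Dirichlet eigenvalues (the paper simply cites lower semicontinuity from~\cite{MR2512810}, which gives upper semicontinuity of the functional and is all that is needed for a maximization problem; your inner/outer parallel set argument for full continuity is a fine, slightly stronger substitute). This part is correct.

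The last assertion is where your proposal has a genuine gap. You correctly recognize that the corner-cutting comparison via Lemma~\ref{lem:one_sided_two_term_asymptotics} and Theorem~\ref{thm:Improved_Berezin_Convex} only bites for large $\specParam$ and cannot give regularity at every fixed $\specParam>\lambda_1(B)$. But your fallback --- invoking the free-boundary regularity theory for maximizers of $\Tr(-\Delta_\Omega-\specParam)^\gamma_\limminus$ among \emph{quasi-open} sets~\cite{MR3305655,MR3506069} --- does not apply here: the domain $\Omega_{\specParam,\gamma}(\K^n)$ maximizes only over convex sets, and there is no reason it should also maximize over all quasi-open sets of unit measure, so the free-boundary results say nothing about it (and even for the unconstrained problem they give smoothness only off an exceptional set, with no convexity to ``upgrade''). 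The paper instead uses a result tailored precisely to the convexity-constrained problem, namely Theorem~3.4 of Bucur~\cite{MR2044433} on regularity of optimal \emph{convex} shapes: once one checks that the functional is Lipschitz in the eigenvalues (a finite sum by Li--Yau~\eqref{eq:LiYau}) and weakly strictly decreasing for $\specParam>\lambda_1(B)$ and $\gamma\geq1$ (as in~\cite{MR3506069}), that theorem directly yields $C^1$ regularity of the boundary of any convex maximizer. You had already identified exactly these two hypotheses; the missing ingredient is the correct theorem to feed them into.
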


\begin{proof}[Proof of Lemma~\ref{lem:Existence_of_maximizer}]
  For fixed $\specParam>\lambda_1(B)$ and $\gamma\geq 1$ our functional is weakly strictly decreasing~\cite{MR3506069}, that is if $\lambda_k(\Omega)<\lambda_k(\widetilde{\Omega})$ for all $k\geq 1$ then 
  \begin{equation}
    \Tr(-\Delta_\Omega-\specParam)_\limminus^\gamma > \Tr(-\Delta_{\widetilde{\Omega}}-\specParam)_\limminus^\gamma.
  \end{equation}
  Moreover, by the Li--Yau inequality~\eqref{eq:LiYau}, our functional is for any fixed $\specParam$ a finite sum of Lipschitz functions and hence Lipschitz. Thus the last part of the lemma is a direct consequence of Theorem~3.4 in~\cite{MR2044433}. 

  If $\specParam\leq \inf\{\lambda_1(\Omega): \Omega\in \A, |\Omega|=1\}$ then the supremum in~\eqref{eq:MaxProb_Existence_thm} is zero and hence any domain in $\A$ is a maximizer. If this is not the case we let $\{\Omega_k\}_{k\geq1}\subset \A$, with $|\Omega_k|=1$, be a maximizing sequence for~\eqref{eq:MaxProb_Existence_thm}. Without loss of generality we assume that $\Tr(-\Delta_{\Omega_k}-\specParam)^\gamma_\limminus>0$. In particular, we must have that $\lambda_1(\Omega_k)< \specParam$ for all $k$. Hence the inequality $\lambda_1(\Omega)\geq \frac{\pi^2}{4r(\Omega)^2}$, for $\Omega \subset \K^n$, due to Hersch in $\R^2$ and Protter in $\R^n$~\cite{Hersch,MR589137} implies that $r(\Omega_k)> \frac{\pi}{2\sqrt{\specParam}}$. Since $|\Omega_k|=1$ for each $k$ we by~\eqref{eq:diameter_bound} obtain an upper bound for $D(\Omega_k)$ which is independent of~$k$. 

  As our functional is invariant under translation we may translate each $\Omega_k$ so that it has barycentre at the origin and obtain a new maximizing sequence which is uniformly bounded. By the Blaschke selection theorem~\cite[Theorem~1.8.7]{Schneider1} we can extract a subsequence which converges in $\K^n$, and hence in $\A$. Abusing notation denote this subsequence by $\{\Omega_k\}_{k\geq 1}$ and let $\Omega_\infty$ denote its limit. Since the eigenvalues of the Dirichlet Laplacian are lower-semi continuous with respect to the topology on $\K^n$~\cite{MR2512810} we find that $\Omega_\infty$ realizes the supremum in~\eqref{eq:MaxProb_Existence_thm}.
\end{proof}


\section{Convergence of maximizers}\label{sec:Convergence}

In this section we prove that for any sequence $\{\specParam_j\}_{j\geq 1}$ tending to infinity the corresponding sequence of maximizers $\Omega_{\specParam_j, \gamma}(\A)$ 
has a convergent subsequence. Moreover, if $\A$ satisfies an additional assumption we characterize the possible limit points of such subsequences. Our main objective is to prove the following proposition:
\begin{proposition}\label{prop:Asymptotic_Shape_Thm}
  Let $\A$ be a closed subset of $\K^n$. Fix $\gamma \geq 1$ and let\/ $\Omega_{\specParam, \gamma}(\A)$ denote any extremal domain for the shape optimization problem
  \begin{equation}\label{eq:prob_asymptotic_shape}
     \sup\{\Tr(-\Delta_\Omega-\specParam)^\gamma_\limminus : \Omega\in \A, |\Omega|=1\}.
   \end{equation}
  Then the following statements hold:
  \begin{enumerate}[label=\textup{(\roman*)}]
  \item\label{itm:Prop_I} For any sequence $\{\specParam_j\}_{j\geq 1}\!\!\uparrow\!\infty$ the corresponding sequence $\{\Omega_{\specParam_j, \gamma}(\A)\}_{j\geq 1}$ has a subsequence which, up to translation, converges in~$\A$. Moreover, $\Omega_\infty$ the limit of such a subsequence has unit measure.

  \item\label{itm:Prop_II} Under the additional assumption that
  \begin{equation}
    \Tr(-\Delta_\Omega-\specParam)^\gamma_\limminus = \LT{\gamma}n |\Omega| \specParam^{\gamma+n/2}-\frac{1}{4}\LT{\gamma}{n-1}|\partial\Omega|\specParam^{\gamma+(n-1)/2}+o(\specParam^{\gamma+(n-1)/2}),
  \end{equation}
  as $\specParam \to \infty$, uniformly for $\Omega$ in compact subsets of $\A$, then the limit $\Omega_\infty$ also minimizes the perimeter in $\A$:
  \begin{equation}\label{eq:prop_perimeter_minimizer}
    |\partial\Omega_\infty|=\inf\{ |\partial\Omega|: \Omega \in \A, |\Omega|=1\}.
  \end{equation}
\end{enumerate}
\end{proposition}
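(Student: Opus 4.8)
The plan is to establish both parts by playing the upper bound of Theorem~\ref{thm:Improved_Berezin_Convex} against the lower bound of Lemma~\ref{lem:one_sided_two_term_asymptotics}. For part~\ref{itm:Prop_I}, the first task is to show the sequence $\{\Omega_{\specParam_j,\gamma}(\A)\}$ cannot degenerate, i.e.\ its inradii stay bounded away from $0$ and (after translating barycentres to the origin) its diameters stay bounded. The key comparison is: since $\Omega_j$ is a maximizer over unit-measure domains in $\A$, and $\A$ contains at least one fixed unit-measure domain $\Omega_0$, we have $\Tr(-\Delta_{\Omega_j}-\specParam_j)^\gamma_\limminus \geq \Tr(-\Delta_{\Omega_0}-\specParam_j)^\gamma_\limminus$; by the one-term Weyl law this forces the left side to be $\geq \LT{\gamma}{n}\specParam_j^{\gamma+n/2}(1+o(1))$. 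On the other hand, Theorem~\ref{thm:Improved_Berezin_Convex} gives $\Tr(-\Delta_{\Omega_j}-\specParam_j)^\gamma_\limminus \leq \LT{\gamma}{n}\specParam_j^{\gamma+n/2} - c(\gamma,n)\LT{\gamma}{n-1}|\partial\Omega_j|\specParam_j^{\gamma+(n-1)/2}$, which in particular forces $|\partial\Omega_j|$ to be bounded above (uniformly in $j$). Combined with $|\Omega_j|=1$ and the isoperimetric inequality, $|\partial\Omega_j|$ is also bounded below, and the two-sided bound~\eqref{eq:twosided_inradius_bound} then gives $r(\Omega_j)$ bounded above and below; the diameter bound~\eqref{eq:diameter_bound} then bounds $D(\Omega_j)$. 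After translating barycentres to the origin we get a uniformly bounded sequence in $\K^n$, so the Blaschke selection theorem yields a subsequence converging in $\K^n$, and since $\A$ is closed the limit $\Omega_\infty$ lies in $\A$; measure is continuous under Hausdorff convergence of convex bodies, so $|\Omega_\infty|=1$.

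For part~\ref{itm:Prop_II} the strategy is the same squeeze but carried to the second order, now using that the subsequence converges and eventually lies in a fixed compact subset of $\A$ (namely a Hausdorff-neighbourhood of $\Omega_\infty$), so the uniform two-term expansion applies along it. Let $\Omega'\in\A$ realize the infimum in~\eqref{eq:prop_perimeter_minimizer}; by the strong compactness of $\K^n$ such a minimizer exists, and we may also regard it as lying in a fixed compact set. Then, along the convergent subsequence, maximality gives
\begin{equation}
  \Tr(-\Delta_{\Omega_j}-\specParam_j)^\gamma_\limminus \geq \Tr(-\Delta_{\Omega'}-\specParam_j)^\gamma_\limminus = \LT{\gamma}{n}\specParam_j^{\gamma+n/2} - \tfrac14\LT{\gamma}{n-1}|\partial\Omega'|\specParam_j^{\gamma+(n-1)/2} + o(\specParam_j^{\gamma+(n-1)/2}),
\end{equation}
using the uniform expansion on the compact set containing $\Omega'$ (and $|\Omega'|=1$). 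On the other hand the same uniform expansion applied to $\Omega_j$ (which for large $j$ lies in a fixed compact neighbourhood of $\Omega_\infty$) gives
\begin{equation}
  \Tr(-\Delta_{\Omega_j}-\specParam_j)^\gamma_\limminus = \LT{\gamma}{n}\specParam_j^{\gamma+n/2} - \tfrac14\LT{\gamma}{n-1}|\partial\Omega_j|\specParam_j^{\gamma+(n-1)/2} + o(\specParam_j^{\gamma+(n-1)/2}).
\end{equation}
Subtracting and dividing by $\tfrac14\LT{\gamma}{n-1}\specParam_j^{\gamma+(n-1)/2}$ yields $|\partial\Omega_j| \leq |\partial\Omega'| + o(1)$; since $|\partial\cdot|$ is continuous under Hausdorff convergence of convex bodies, letting $j\to\infty$ gives $|\partial\Omega_\infty| \leq |\partial\Omega'| = \inf\{|\partial\Omega|:\Omega\in\A,|\Omega|=1\}$. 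The reverse inequality is immediate since $\Omega_\infty\in\A$ with $|\Omega_\infty|=1$, so equality holds.

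The main obstacle is the non-degeneracy step in part~\ref{itm:Prop_I}: everything downstream relies on extracting a lower bound on $r(\Omega_j)$, and this rests entirely on the fact that the second term in Theorem~\ref{thm:Improved_Berezin_Convex} carries the \emph{perimeter} with the \emph{correct} power $\specParam^{\gamma+(n-1)/2}$ of the spectral parameter — an upper bound with a weaker (larger) power, or with a geometric quantity not controlling the perimeter, would not close the argument. One must also be a little careful that the comparison domain $\Omega_0$ (resp.\ $\Omega'$) is genuinely available in $\A$ with unit measure, which is guaranteed by the standing assumption on $\A$; and in part~\ref{itm:Prop_II} one must check that the convergent subsequence does enter and stay in a single compact subset of $\A$ so that the \emph{uniform} error term can be invoked — this is automatic because a Hausdorff-convergent sequence together with its limit is compact in $\K^n$ and contained in $\A$ by closedness.
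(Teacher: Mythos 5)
Your overall strategy is the paper's: compare the maximizer with a fixed unit-measure domain $\Omega_0\in\A$, use the two-term Berezin bound of Theorem~\ref{thm:Improved_Berezin_Convex} to extract a uniform perimeter bound, upgrade this to uniform boundedness via~\eqref{eq:twosided_inradius_bound} and~\eqref{eq:diameter_bound}, apply the Blaschke selection theorem, and then rerun the comparison with the uniform two-term expansion to identify the limit as a perimeter minimizer. Your treatment of part~\ref{itm:Prop_II} is correct and coincides with the paper's.

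There is, however, a genuine gap in your execution of the non-degeneracy step in part~\ref{itm:Prop_I}. You lower-bound $\Tr(-\Delta_{\Omega_0}-\specParam_j)^\gamma_\limminus$ by ``the one-term Weyl law,'' i.e.\ by $\LT{\gamma}{n}\specParam_j^{\gamma+n/2}(1+o(1))$. The error here is only $o(\specParam_j^{\gamma+n/2})$, which is of strictly larger order than the boundary term $c(\gamma,n)\LT{\gamma}{n-1}|\partial\Omega_{\specParam_j}|\specParam_j^{\gamma+(n-1)/2}$ in the upper bound of Theorem~\ref{thm:Improved_Berezin_Convex}. Comparing the two bounds therefore only yields $c(\gamma,n)\LT{\gamma}{n-1}|\partial\Omega_{\specParam_j}|\specParam_j^{\gamma+(n-1)/2}\leq o(\specParam_j^{\gamma+n/2})$, i.e.\ $|\partial\Omega_{\specParam_j}|=o(\specParam_j^{1/2})$, which is \emph{not} a uniform bound and does not preclude degeneration (e.g.\ maximizers collapsing to thin slabs with perimeter growing like $\specParam_j^{1/4}$). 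What is needed --- and what your opening sentence announces but your argument does not actually invoke --- is the two-term lower bound of Lemma~\ref{lem:one_sided_two_term_asymptotics} applied to $\Omega_0$, so that the comparison reads
\begin{equation}
  \LT{\gamma}{n}\specParam_j^{\gamma+n/2}-\frac{\LT{\gamma}{n-1}}{4}|\partial\Omega_0|\specParam_j^{\gamma+(n-1)/2}+o(\specParam_j^{\gamma+(n-1)/2})
  \leq
  \LT{\gamma}{n}\specParam_j^{\gamma+n/2}-c(\gamma,n)\LT{\gamma}{n-1}|\partial\Omega_{\specParam_j}|\specParam_j^{\gamma+(n-1)/2},
\end{equation}
whence $|\partial\Omega_{\specParam_j}|\leq |\partial\Omega_0|/(4c(\gamma,n))+o(1)$. (As the paper notes in a remark, it suffices that \emph{one} unit-measure $\Omega_0\in\A$ admits an expansion whose second term is genuinely of order $\specParam^{\gamma+(n-1)/2}$; the one-term Weyl law alone is not enough.) With this correction the remainder of your argument goes through as written.
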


\begin{remark} 
    As a consequence of the results in~\cite{FrankLarson} we know that the assumption in the second part of the theorem is redundant, the expansion holds uniformly on any compact subset of $\K^n$.
    As a consequence the conclusions of Proposition~\ref{prop:Asymptotic_Shape_Thm} remain true without it and hence extends Theorems~\ref{thm:Polytopes} and~\ref{thm:UnifC1} to \emph{any} admissible class of convex domains $\A$ (see~\cite{FrankLarson}). 
\end{remark}

With Proposition~\ref{prop:Asymptotic_Shape_Thm} in hand it is straightforward to prove our main theorems.
\begin{proof}[Proof of Theorems~\ref{thm:Polytopes} and~\ref{thm:UnifC1}]
  By Lemmas~\ref{lem:Asymptotic_Komega} and~\ref{lem:Asymptotic_Polytopes} the classes $\K^n_\omega$ and $\mathcal{P}_m$ satisfy the assumptions of~\ref{itm:Prop_I}-\ref{itm:Prop_II} of Proposition~\ref{prop:Asymptotic_Shape_Thm}, and thus the theorems follow as special cases thereof.
\end{proof}

\begin{proof}[Proof of Proposition~\ref{prop:Asymptotic_Shape_Thm}]
The proof follows closely the strategy of Antunes and Freitas~\cite{MR3001382} (see also~\cite{vdBBG,vdBergGittins,MR3098942,GittinsLarson} for applications in very similar settings): Using the bound of Theorem~\ref{thm:Improved_Berezin_Convex} one readily obtains that the sequence of maximizers have uniformly bounded perimeters. Using the inequalities of Section~\ref{sec:Prel_convex} we can conclude that the sequence is uniformly bounded, and thus extract a convergent subsequence. The final ingredient is to use the uniform asymptotic expansions in~\ref{itm:Prop_II} to identify the limiting domains as minimizers of the perimeter.

Fix $\A$ and $\gamma \geq 1$. For notational convenience we will for a maximizer of~\eqref{eq:prob_asymptotic_shape} write simply $\Omega_\specParam$ instead of $\Omega_{\specParam, \gamma}(\A)$. Without loss of generality we throughout the proof assume that the barycentre of each maximizer is the origin. The idea used to prove the existence of a convergent subsequence of $\Omega_{\specParam, \gamma}(\A)$ is to
use the maximality of $\Tr(-\Delta_{\Omega_{\specParam}}-\specParam)_\limminus^\gamma$ and compare it with the corresponding Riesz mean for some fixed domain $\Omega_0\in\A$ with $|\Omega_0|=1$.

Assume that  $\specParam>\inf\{\lambda_1(\Omega): \Omega\in \A, |\Omega|=1\}$. Then, by the maximality of~$\Omega_{\specParam}$,
\begin{equation}
  0<\Tr(-\Delta_{\Omega_0}-\specParam)^\gamma_\limminus \leq \Tr(-\Delta_{\Omega_{\specParam}}-\specParam)^\gamma_\limminus.
\end{equation}
Using Theorem~\ref{thm:Improved_Berezin_Convex} and Lemma~\ref{lem:one_sided_two_term_asymptotics} this inequality implies that
\begin{equation}\label{eq:comparison_boundedness}
\begin{aligned}
  \LT{\gamma}{n}\specParam^{\gamma+n/2}-\frac{\LT{\gamma}{n-1}}{4}|\partial \Omega_0| &\specParam^{\gamma+(n-1)/2} +o(\specParam^{\gamma+(n-1)/2})\\
  &\leq 
  \LT{\gamma}{n}\specParam^{\gamma+n/2} - c(\gamma, n)\LT{\gamma}{n-1}|\partial \Omega_{\specParam}|\specParam^{\gamma+ (n-1)/2}.
\end{aligned}
\end{equation}
Rearranging~\eqref{eq:comparison_boundedness} yields
\begin{equation}
  |\partial \Omega_\specParam| \leq \frac{|\partial \Omega_0|}{4c(\gamma, n)} + o(1), 
\end{equation}
as $\specParam \to \infty$, and thus the perimeter of the maximizers remains uniformly bounded in $\specParam$. By~\eqref{eq:twosided_inradius_bound} and~\eqref{eq:diameter_bound} we conclude that $\Omega_\specParam$ remains uniformly bounded with respect to $\specParam$. Thus we can for any sequence $\{\specParam_j\}_{j\geq 1}$ tending to infinity extract a subsequence of $\{\Omega_{\specParam_j}\}_{j\geq 1}$ which converges to a domain $\Omega_\infty\in \A$. Since $|\Omega|$ and $|\partial\Omega|$ are continuous with respect to the topology of $\K^n$ we find that $|\Omega_\infty|=1$ and $|\partial\Omega_\infty|\leq \frac{|\partial \Omega_0|}{4c(\gamma, n)}$, this completes the proof of~\ref{itm:Prop_I}.

With a slight abuse of notation we let $\{\specParam_j\}_{j\geq 1}$ denote the subsequence along which $\{\Omega_{\specParam_j}\}_{j\geq 1}$ converges to $\Omega_\infty$. For each $j\geq 1$ we have, by the maximality of $\Omega_{\specParam_j}$, that
\begin{equation}
  \frac{\Tr(-\Delta_{\Omega_0}-\specParam_j)^\gamma_\limminus-\LT{\gamma}{n}\specParam_j^{\gamma+n/2}}{\specParam_j^{\gamma+(n-1)/2}} \leq \frac{ \Tr(-\Delta_{\Omega_{\specParam_j}}-\specParam_j)^\gamma_\limminus-\LT{\gamma}{n}\specParam_j^{\gamma+n/2}}{\specParam_j^{\gamma+(n-1)/2}}.
\end{equation}
Assume now that $\A$ satisfies the assumption in~\ref{itm:Prop_II}.
Using that our sequence of maximizers $\{\Omega_{\specParam_j}\}_{j\geq 1}$ is bounded, and hence contained in a compact subset of $\A$, to uniformly control the error terms, one finds that
\begin{equation}
  |\partial\Omega_{\specParam_j}| \leq |\partial \Omega_0| + o(1), 
\end{equation}
as $j\to \infty$. Since the sequence $\Omega_{\specParam_j}$ converges to $\Omega_{\infty}$ and the measure of the perimeter is continuous in the topology of $\K^n$, we obtain that $|\partial \Omega_{\infty}|\leq |\partial \Omega_0|$. Choosing $\Omega_0$ to realize the infimum in~\eqref{eq:prop_perimeter_minimizer} concludes the proof.
\end{proof}

\begin{remark}
  We note that in the proof of~\ref{itm:Prop_I} we do not require the full statement of Lemma~\ref{lem:one_sided_two_term_asymptotics} only that there exists \emph{one} domain $\Omega_0\in \A$ with $|\Omega_0|=1$ for which the second term of the asymptotic expansion of the Riesz mean is of the correct order $\sim \specParam^{\gamma+(n-1)/2}$.
\end{remark}


\section{Sums of eigenvalues}\label{sec:sums}

In this section we prove that our techniques allow us also to study the behaviour of convex domains realizing the infimum
\begin{equation}\label{eq:shape_prob_sums}
   \inf\Bigl\{\frac{1}{m}\sum_{k=1}^m \lambda_k(\Omega): \Omega \in \A, |\Omega|=1\Bigr\}
 \end{equation} 
in the limit $m\to \infty$. This problem, but without the convexity assumptions, was recently studied by Freitas~\cite{FreitasAverages}.

\begin{theorem}\label{thm:sums}
  Let $\A$ be a closed subset of $\K^n$ satisfying the assumption in~\ref{itm:Prop_II} of Proposition~\ref{prop:Asymptotic_Shape_Thm}. Let $\Omega_m(\A)$ denote any extremal domain for the shape optimization problem~\eqref{eq:shape_prob_sums}. Then the sequence $\{\Omega_m\}_{m\geq 1}$ has a subsequence which, up to translations, converges in~$\A$. Moreover, $\Omega_\infty$ the limit of such a subsequence has unit measure and minimizes the perimeter in $\A$:
  \begin{equation}
     |\partial\Omega_\infty|=\inf\{|\partial\Omega|: \Omega\in \A, |\Omega|=1\}.
  \end{equation} 
\end{theorem}
\begin{remark}
  Again we note that the extra assumption on $\A$ can be dropped in light of the results in~\cite{FrankLarson}.
\end{remark}

The proof of Theorem~\ref{thm:sums} is based on our tools developed for Riesz means and the close connection between sums of eigenvalues and Riesz means of order $\gamma=1$. In particular, via the Legendre transform the asymptotic expansion for $\Tr(-\Delta_\Omega-\specParam)_\limminus$ implies a similar two-term expansion for the sum (see, for instance,~\cite[Appendix A]{MR3466545}): For $\Omega\subset \R^n$ such that~\eqref{eq:Two_Term_Weyl} holds
\begin{equation}
  \frac{1}{m}\sum_{k=1}^m \lambda_k(\Omega) = A_n \Bigl(\frac{m}{|\Omega|}\Bigr)^{2/n}+ B_n\frac{|\partial\Omega|}{|\Omega|}\Bigl(\frac{m}{|\Omega|}\Bigr)^{1/n}+ o(m^{1/n}),
\end{equation}
as $m\to \infty$. The constants $A_n, B_n$ are explicitly given by
\begin{equation}
  A_n= \frac{4\pi n \Gamma(\frac{n}{2}+1)^{2/n}}{n+2}, \qquad B_n=\frac{2\pi \Gamma(\frac{n}{2}+1)^{1+1/n}}{(n+1)\Gamma(\frac{n+1}{2})}.
\end{equation}
It should also be noted that the Legendre transform switches the direction of inequalities. In particular, the lower bound for the Riesz mean asymptotics provided by Lemma~\ref{lem:one_sided_two_term_asymptotics} turns into a corresponding upper bound for the asymptotics of the sum.

If we can prove a bound similar to Theorem~\ref{thm:Improved_Berezin_Convex} in the setting of eigenvalue sums then it is straightforward to follow the strategy in the proofs of Lemma~\ref{lem:Existence_of_maximizer} and Proposition~\ref{prop:Asymptotic_Shape_Thm} to prove first the existence and uniform boundedness of the minimizers.

\begin{corollary}[Improved Li--Yau inequality]\label{cor:Improved_LiYau_Convex}
Let\/ $\Omega\in \K^n$. There exists a positive constant ${c(n)}$ such that, for all $m\geq 1$,
\begin{align}
    \frac{1}{m}\sum_{k=1}^m  \lambda_k(\Omega) \geq A_n\Bigl(\frac{m}{|\Omega|}\Bigr)^{2/n}+ c(n)B_n\frac{|\partial\Omega|}{|\Omega|}\Bigl(\frac{m}{|\Omega|}\Bigr)^{1/n}.
\end{align}
\end{corollary}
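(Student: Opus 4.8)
The plan is to obtain Corollary~\ref{cor:Improved_LiYau_Convex} from Theorem~\ref{thm:Improved_Berezin_Convex} by the standard Legendre transform (Aizenman--Lieb duality) that converts a two-term upper bound for the Riesz mean $\Tr(-\Delta_\Omega-\specParam)_\limminus$ into a two-term lower bound for the eigenvalue sum. First I would recall the basic duality: for any nonnegative nonincreasing sequence $\{\lambda_k\}$ one has the pointwise identity
\begin{equation}
  \sum_{k=1}^m \lambda_k = \sup_{\specParam \geq 0}\Bigl( m\specParam - \Tr(-\Delta_\Omega-\specParam)_\limminus\Bigr),
\end{equation}
(the supremum being attained at $\specParam$ with $\lambda_m \leq \specParam \leq \lambda_{m+1}$), so any upper bound $\Tr(-\Delta_\Omega-\specParam)_\limminus \leq F(\specParam)$ yields $\sum_{k=1}^m\lambda_k \geq \sup_{\specParam\geq 0}(m\specParam - F(\specParam))$.

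Next I would apply this with $F$ coming from Theorem~\ref{thm:Improved_Berezin_Convex} in the case $\gamma = 1$, namely
\begin{equation}
  F(\specParam) = \LT{1}{n}|\Omega|\specParam^{1+n/2} - c(1,n)\LT{1}{n-1}|\partial\Omega|\specParam^{1+(n-1)/2}
\end{equation}
for $\specParam > \pi^2/(4r(\Omega)^2)$, together with the fact that $\Tr(-\Delta_\Omega-\specParam)_\limminus = 0$ (hence the bound $\sum_{k=1}^m \lambda_k \geq m\specParam$ for such $\specParam$) on the complementary range. One then has to compute, or at least estimate from below, $\sup_{\specParam\geq 0}(m\specParam - F(\specParam))$. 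The leading term is the classical Legendre-transform computation: maximizing $m\specParam - \LT{1}{n}|\Omega|\specParam^{1+n/2}$ gives exactly $A_n(m/|\Omega|)^{2/n}\cdot m$ with the stated constant $A_n = \tfrac{4\pi n}{n+2}\Gamma(\tfrac n2+1)^{2/n}$ (this is precisely how the Li--Yau inequality is usually recovered from the Berezin inequality, cf.~\cite{Laptev_97}). Adding back the negative correction term $-c(1,n)\LT{1}{n-1}|\partial\Omega|\specParam^{1+(n-1)/2}$ shifts the optimal $\specParam$ slightly, but since this term is of lower order in $\specParam$ one can insert the unperturbed optimal value $\specParam_* \sim \mathrm{const}\cdot (m/|\Omega|)^{2/n}$ into $m\specParam - F(\specParam)$ as a lower bound for the supremum, producing a correction of the form $+c'\,|\partial\Omega|\cdot(m/|\Omega|)^{(n-1)/n}$ with $c' > 0$; matching constants and dividing by $m$ gives the claimed form with some $c(n) > 0$ (the explicit value of $B_n$ entering only through the normalization, and the constant $c(n)$ being allowed to be smaller than the sharp $1$).

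The one point requiring care — and the main obstacle — is the range restriction $\specParam > \pi^2/(4r(\Omega)^2)$ in Theorem~\ref{thm:Improved_Berezin_Convex}: the Legendre computation above is only legitimate if the optimal parameter $\specParam_*$ falls in this range, i.e. if $m$ is large enough relative to the geometry. For small $m$ (equivalently $\specParam_*$ below the spectral gap) one must argue separately, using $\lambda_k(\Omega) \geq \lambda_1(\Omega) \geq \pi^2/(4r(\Omega)^2)$ together with the bound $r(\Omega) \leq n|\Omega|/|\partial\Omega|$ from~\eqref{eq:twosided_inradius_bound}, which gives $\tfrac1m\sum_{k=1}^m\lambda_k \geq \pi^2/(4r(\Omega)^2) \geq c\,(|\partial\Omega|/|\Omega|)^2$; one then checks that in this regime the right-hand side of the claimed inequality is itself controlled by a constant times $(|\partial\Omega|/|\Omega|)^2$ (since $(m/|\Omega|)^{1/n} \lesssim (m/|\Omega|)^{2/n} \lesssim |\partial\Omega|/|\Omega|$ when $\specParam_*$ is below the gap), so after possibly shrinking $c(n)$ the inequality holds there too. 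Finally I would remark, exactly as in the proof of Theorem~\ref{thm:Improved_Berezin_Convex}, that the passage from $\gamma=1$ Riesz means to eigenvalue sums is precisely the Legendre duality and introduces no additional hypotheses, so the corollary holds for all $\Omega \in \K^n$ and all $m \geq 1$.
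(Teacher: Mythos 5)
Your proposal is correct, but it follows a genuinely different route from the paper. The paper does not Legendre-transform the final two-term statement of Theorem~\ref{thm:Improved_Berezin_Convex} (which carries the restriction $\specParam>\pi^2/(4r(\Omega)^2)$); instead it goes back to the intermediate \emph{three-term} bound~\eqref{eq:three_term_trace_bound}, $\Tr(-\Delta_\Omega-\specParam)_\limminus\le \LT{1}{n}|\Omega|\specParam^{1+n/2}-c_1 b\LT{1}{n-1}|\Omega|\specParam^{(n+1)/2}/w(\Omega)+c_2b^2\LT{1}{n-2}|\Omega|\specParam^{n/2}/w(\Omega)^2$, which is valid for \emph{all} $\specParam\ge0$. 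Plugging the unperturbed optimizer $\specParam_*\sim(m/|\Omega|)^{2/n}$ into the Legendre transform and then absorbing the unwanted positive $b^2$-term by taking $b$ small (using $m\ge1$, Steinhagen~\eqref{eq:Steinhagen} and~\eqref{eq:twosided_inradius_bound}) gives the result in one stroke, with no case analysis in $m$. Your version buys a cleaner input bound at the price of the case split, and the split is where you should be a bit more careful: in the small-$m$ regime, shrinking $c(n)$ only helps with the \emph{second} term on the right, whereas the first term carries the fixed constant $A_n$ and must be beaten by $\lambda_1$ outright. This does work, because $A_n(m/|\Omega|)^{2/n}=\tfrac{n}{n+2}\specParam_*\le\tfrac{n}{n+2}\cdot\tfrac{\pi^2}{4r(\Omega)^2}\le\tfrac{n}{n+2}\lambda_1(\Omega)$, so a definite surplus $\tfrac{2}{n+2}\lambda_1(\Omega)\gtrsim r(\Omega)^{-2}$ remains, and by~\eqref{eq:twosided_inradius_bound} this surplus dominates $\tfrac{|\partial\Omega|}{|\Omega|}(m/|\Omega|)^{1/n}\lesssim r(\Omega)^{-2}$ after shrinking $c(n)$; but this accounting should be made explicit, since the inequality $(m/|\Omega|)^{1/n}\lesssim(m/|\Omega|)^{2/n}$ you invoke is not true in general and is not the relevant comparison.
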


\begin{proof}[Proof of Corollary~\ref{cor:Improved_LiYau_Convex}]
The proof is based on the fact that 
\begin{equation}\label{eq: Legendre transform}
  \sup_{\Lambda \geq 0}\Bigl(m\Lambda - \sum_{k:\lambda_k(\Omega)<\Lambda}(\Lambda-\lambda_k(\Omega))\Bigr)= \sum_{k=1}^m \lambda_k(\Omega).
\end{equation}
This identity can be deduced by studying the sign of the derivative of the expression in the parenthesis with respect to $\specParam$ on intervals where $N_\Omega(\specParam)$ is constant. 
To find a bound for the right-hand side of~\eqref{eq: Legendre transform} we plug in a suitable choice of $\Lambda \geq 0$ in the left-hand side and use Theorem~\ref{thm:Improved_Berezin_Convex} to bound the Riesz mean.

The natural choice to make is $\Lambda=\Lambda_0:= \frac{4^{1/n}}{(n+2)^{2/n}(\LT{1}n)^{2/n}}\bigl(\frac{m}{|\Omega|}\bigr)^{2/n}$, however we need to distinguish whether this quantity is less or greater than $\frac{\pi^2}{4r(\Omega)^2}$. (This case distinction was missed in the published version of the paper, resulting in an erroneous proof). If $\Lambda_0 \geq \frac{\pi^2}{4r(\Omega)^2}$ Theorem~\ref{thm:Improved_Berezin_Convex} implies that
\begin{align*}
  \sum_{k=1}^m \lambda_k(\Omega) 
  &\geq 
  m \Lambda_0 - \LT{1}n |\Omega|\Lambda_0^{1+n/2}+ c\LT{1}{n-1}|\partial\Omega|\Lambda_0^{(n+1)/2}\\
  &=
  m A_n \Bigl(\frac{m}{|\Omega|}\Bigr)^{2/n}+ mc' B_n \frac{|\partial\Omega|}{|\Omega|}\Bigl(\frac{m}{|\Omega|}\Bigr)^{1/n}.
\end{align*}

If $\Lambda_0\leq \frac{\pi^2}{4r(\Omega)^2}$ choose $\Lambda = \frac{\pi^2}{4r(\Omega)^2}$. The results of~\cite{Hersch,MR589137} (or Theorem~\ref{thm:Improved_Berezin_Convex}) implies that $\frac{1}{m}\sum_{k=1}^m \lambda_k(\Omega)\geq \frac{\pi^2}{4r(\Omega)^2}$. What remains is to show that if $\Lambda_0 \leq \frac{\pi^2}{4r(\Omega)^2}$, then
\begin{equation}\label{eq: goal}
   A_n \Bigl(\frac{m}{|\Omega|}\Bigr)^{2/n}+ c B_n \frac{|\partial\Omega|}{|\Omega|}\Bigl(\frac{m}{|\Omega|}\Bigr)^{1/n} \leq \frac{\pi^2}{4r(\Omega)^2},
\end{equation}
provided $c=c(n)>0$ is small enough. Let $a= \frac{4^{1/n}}{(n+2)^{2/n}(\LT{1}n)^{2/n}}$ so that $A_n = a-\LT{a}{n}a^{1+n/2}$. Therefore, by~\eqref{eq:twosided_inradius_bound} and $\Lambda_0=a\bigl(\frac{m}{|\Omega|}\bigr)^{2/n}\leq \frac{\pi^2}{4r(\Omega)^2}$,
\begin{equation}
  A_n\Bigl(\frac{m}{|\Omega|}\Bigr)^{2/n}+ cB_n \frac{|\partial\Omega|}{|\Omega|}\Bigl(\frac{m}{|\Omega|}\Bigr)^{1/n}\leq \Bigl[(1-\LT{1}na^{n/2})+cB_n \frac{2n}{\pi a^{1/2}}\Bigr] \frac{\pi^2}{4r(\Omega)^2},
\end{equation}
from which~\eqref{eq: goal} follows for $c$ sufficiently small.
\end{proof}

\begin{proof}[Proof of Theorem~\ref{thm:sums}]
  The claim follows by mimicking the proofs of Lemmas~\ref{lem:Existence_of_maximizer} and Proposition~\ref{prop:Asymptotic_Shape_Thm}. The use of the asymptotic bound of Lemma~\ref{lem:one_sided_two_term_asymptotics} should be replaced by its corresponding Legendre transform, and the use of Theorem~\ref{thm:Improved_Berezin_Convex} by Corollary~\ref{cor:Improved_LiYau_Convex}.
\end{proof}


\section{Uniform two-term asymptotics}\label{sec:Asymptotics}

In this section we use the methods of Frank and Geisinger~\cite{MR2885166,MR2994205} to prove Lemmas~\ref{lem:one_sided_two_term_asymptotics} and~\ref{lem:Asymptotic_Polytopes}. The proof of Lemma~\ref{lem:one_sided_two_term_asymptotics} will complete the proof of Proposition~\ref{prop:Asymptotic_Shape_Thm}, which in combination with Lemma~\ref{lem:Asymptotic_Komega} and Lemma~\ref{lem:Asymptotic_Polytopes} proves Theorem~\ref{thm:UnifC1} and Theorem~\ref{thm:Polytopes}, respectively.

To match the notation in~\cite{MR2885166,MR2994205} we here consider the asymptotics of $\Tr(-h^2\Delta_\Omega-1)_\limminus^\gamma$ as $h\to 0^\limplus$. By a simple calculation~\eqref{eq:Two_Term_Weyl} is equivalent to
\begin{equation}
  \Tr(-h^2 \Delta_\Omega-1)_\limminus^\gamma = \LT{\gamma}{n}|\Omega|h^{-n}- \frac{\LT{\gamma}{n-1}}{4}|\partial \Omega|h^{-n+1}+o(h^{-n+1}), \quad \mbox{as }h\to 0^\limplus,
\end{equation}
and~\eqref{eq:lem_onesided_asymptotics} to the corresponding inequality.

In~\cite{MR2885166, MR2994205} the authors consider only the case $\gamma=1$ but it can be lifted to larger $\gamma$ using the Aizenman--Lieb identity~\cite{AizenmanLieb}: If $\gamma_1\geq 0$ and $\gamma_2>\gamma_1$ then
  \begin{equation}
    \Tr(-\Delta_\Omega-\specParam)^{\gamma_2}_\limminus = B(1+\gamma_1, \gamma_2-\gamma_1)^{-1} \int_0^\infty \tau^{-1+\gamma_2-\gamma_1} \Tr(-\Delta_\Omega-(\specParam-\tau))^{\gamma_1}_\limminus\, d\tau, 
  \end{equation}
  where $B$ denotes the Euler Beta function. It thus suffices to prove Lemmas~\ref{lem:one_sided_two_term_asymptotics} and~\ref{lem:Asymptotic_Polytopes} in the case $\gamma=1$.

The proof relies on localizing the operator into balls whose sizes vary depending on the distance to the complement of $\Omega$. The asymptotic contributions from each of these localizations is then analysed separately. 

Using Theorem~22 in~\cite{SolovejSpitzer} the localization is constructed by introducing a length-scale $l(u)$ and functions $\phi_u \in C^\infty_0(\R^n; \R)$ with support in $B_{l(u)}(u)=\{x\in \R^n : |x-u|< l(u)\}$, satisfying that
\begin{equation}\label{eq:Assumptions_phi}
  \|\phi_u\|_{\infty}\leq c, \quad \|\grad \phi_u\|_{\infty}\leq c\pps l(u)^{-1}
\end{equation}
and for any $x \in \R^n$
\begin{equation}\label{eq:Integral_partition_unity}
  \int_{\R^n} \phi_u^2(x)l(u)^{-n}\, du = 1.
\end{equation}
Here and in what follows $c$ will denote a positive constant which may change from line to line, but which depends only on the dimension and the choice of $l(u), \phi_u$.
Following~\cite{MR2885166,MR2994205} we set
\begin{equation}\label{eq:local_length_scale}
  l(u):=\frac{1}{2}\bigl(1+(\dist(u, \Omega^c)^2+l_0^2)^{-1/2}\bigr)^{-1},
\end{equation}
where $l_0\in (0, 1)$ is a parameter depending only on $h$ which will tend to zero as $h\to 0^\limplus$.

We will use the following results from~\cite{MR2885166, MR2994205}:

\begin{lemma}[{\cite[Proposition~1.1]{MR2885166}}]\label{lem:Localization}
 For\/ $0<l_0< 1$ and $0<h<M l_0$ we have that
\begin{equation}
  \biggl|\Tr(-h^2\Delta_\Omega-1)_\limminus - \int_{\R^n}\Tr(\phi_u(-h^2\Delta_\Omega-1)\phi_u)_\limminus l(u)^{-n}\, du \biggr| \leq 
  c \int_{\Omega^*}l(u)^{-2}\,du\,h^{-n+2},
\end{equation}
where $\Omega^*:=\{u\in \R^n: \supp\phi_u\cap \Omega \neq \emptyset\}$ and the constant $c$ depends only on $M$ and those in~\eqref{eq:Assumptions_phi}.
\end{lemma}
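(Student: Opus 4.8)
The final statement in the excerpt is Lemma~\ref{lem:Localization}, which is quoted verbatim from \cite[Proposition~1.1]{MR2885166}, so strictly speaking nothing needs to be proved here beyond recalling the construction. Nevertheless, let me describe how one would establish it, since the argument underpins the whole of Section~\ref{sec:Asymptotics}. The plan is to combine the operator partition of unity from Theorem~22 in \cite{SolovejSpitzer} with the IMS-type localization formula, and to control the resulting commutator error by a bound on $\sum$ of local kinetic energies, which in turn is estimated by the semiclassical Weyl law applied on each localization ball.

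\emph{Step 1: the localization formula.} Starting from the functions $\phi_u$ satisfying \eqref{eq:Assumptions_phi}--\eqref{eq:Integral_partition_unity}, one writes the IMS localization identity
\begin{equation}
  -h^2\Delta_\Omega = \int_{\R^n}\bigl(\phi_u(-h^2\Delta_\Omega)\phi_u - h^2|\grad\phi_u|^2\bigr)l(u)^{-n}\,du
\end{equation}
in the quadratic form sense on $H_0^1(\Omega)$; this is the standard computation using $\int \phi_u^2 l(u)^{-n}\,du=1$ and $\int \phi_u\grad\phi_u\, l(u)^{-n}\,du = \tfrac12\grad\!\int\phi_u^2 l(u)^{-n}\,du = 0$. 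Combining with the variational (Berezin--Lieb type) principle for $\Tr(\,\cdot\,)_\limminus$, which says that localizing a Schrödinger operator only decreases the negative trace up to the localization error, yields a two-sided bound whose discrepancy from $\int_{\R^n}\Tr(\phi_u(-h^2\Delta_\Omega-1)\phi_u)_\limminus l(u)^{-n}\,du$ is controlled by $h^2\int |\grad\phi_u|^2$-type terms integrated against $l(u)^{-n}\,du$, evaluated on the relevant eigenfunctions.

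\emph{Step 2: estimating the error.} Using $\|\grad\phi_u\|_\infty \le c\, l(u)^{-1}$ and the fact that $\phi_u$ is supported in a ball of radius $l(u)$, the error integrand is pointwise bounded by $c\, l(u)^{-2}$ times the trace of a local operator; crucially, only those $u$ for which $\supp\phi_u$ meets $\Omega$ contribute, i.e.\ $u\in\Omega^*$. Bounding the local negative traces by the leading Weyl term (Berezin--Li--Yau, inequality \eqref{eq:BerezinIneq}) on each ball $B_{l(u)}(u)$ produces a factor $h^{-n}$ from the phase-space volume, but one power of $h^2$ is gained from the gradient squared, leaving $h^{-n+2}$; integrating the remaining geometric weight $l(u)^{-2}$ over $\Omega^*$ gives exactly the right-hand side $c\int_{\Omega^*}l(u)^{-2}\,du\,h^{-n+2}$. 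One should be slightly careful that the local length scale $l(u)$ in \eqref{eq:local_length_scale} stays bounded above and below away from $0$ on any fixed compact piece, so the local balls are genuine balls and the constants are uniform; this is where the definition of $l(u)$ via $(\dist(u,\Omega^c)^2+l_0^2)^{-1/2}$ is used, since $l(u)\in[\tfrac14,\tfrac12]$ always.

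\emph{Main obstacle.} The genuinely delicate point — and the reason this is cited rather than reproved — is making the error estimate in Step 2 quantitatively uniform in the geometry of $\Omega$, which is what is needed for the "uniform on compact subsets" clauses in Lemmas~\ref{lem:one_sided_two_term_asymptotics} and~\ref{lem:Asymptotic_Polytopes}. The integral $\int_{\Omega^*}l(u)^{-2}\,du$ must be shown to be $O(h^{-n+1}\cdot h^{n-2}) = o(h^{-n+1})$ after multiplication by $h^{-n+2}$, i.e.\ one needs $\int_{\Omega^*}l(u)^{-2}\,du = o(h^{-1})$ with a constant depending only on $|\Omega|$, $|\partial\Omega|$, and the regularity modulus; this requires splitting the region near $\partial\Omega$ (where $l(u)\approx l_0$ and the measure of the strip is $\approx |\partial\Omega| l_0$, contributing $\approx |\partial\Omega| l_0^{-1}$) from the bulk (where $l(u)\approx\tfrac12$, contributing $\approx |\Omega|$), and then optimizing the free parameter $l_0=l_0(h)\to 0^\limplus$ against the errors appearing in the subsequent local analysis. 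The bookkeeping of these geometric constants through the Frank--Geisinger scheme is exactly what \cite{FrankLarson_18} carries out explicitly, and for the present purposes it suffices to invoke \cite{MR2885166,MR2994205} as stated.
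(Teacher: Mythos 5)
The paper does not prove this lemma either --- it imports it verbatim from Frank--Geisinger \cite[Proposition~1.1]{MR2885166} and only remarks that the error is kept in integral form rather than being bounded by $l_0^{-1}$ --- and your sketch is faithful to the cited proof: IMS localization with the continuous partition of unity, the variational principle for $\Tr(\,\cdot\,)_\limminus$ in both directions (a trial density matrix built from the local ground-state projections for the lower bound, IMS plus a local counting bound of Berezin--Li--Yau type for the upper bound), with the commutator error $h^2|\grad\phi_u|^2\lesssim h^2 l(u)^{-2}\mathbbm{1}_{B_{l(u)}(u)}$ producing exactly the factor $l(u)^{-2}h^{-n+2}$ under the integral over $\Omega^*$. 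One concrete misstatement should be corrected: it is not true that $l(u)\in[\tfrac14,\tfrac12]$ always. From \eqref{eq:local_length_scale} one only gets $l_0/4\leq l(u)<\tfrac12$, and near $\partial\Omega$ (indeed on all of $\Omega^c$) one has $l(u)=\tfrac{l_0}{2+2l_0}\approx l_0/2$; the collapse of the length scale to $\sim l_0$ at the boundary is the whole point of the construction, since otherwise the localization could not resolve the surface term. This does not damage the error estimate you are proving here, precisely because the factor $l(u)^{-2}$ is retained inside the integral over $\Omega^*$, but it would invalidate your uniformity remark as literally stated.
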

\begin{remark}
  In Proposition~1.1 in~\cite{MR2885166} the integral on the right-hand side is in the final step of the proof bounded in terms of $l_0^{-1}$. As we here wish to keep track of how the remainder depends on $\Omega$ we choose to keep it in integral form. 

 Moreover, in~\cite{SolovejSpitzer} the function $l$ is assumed to be $C^1$-regular, however, a Lipschitz assumption is sufficient (see~\cite{FrankLarson}).
\end{remark}
\begin{remark}
  We also note that $\Omega\subsetneq \Omega^* \subseteq \Omega^t$, for $t= \frac{l_0}{2+2l_0}$. If the functions $\phi_u$ are chosen so that $\supp\phi_u=B_{l(u)}(u)$ then equality holds in the second inclusion.
\end{remark}

\begin{lemma}[{\cite[Proposition~1.1]{MR2885166} and \cite[Proposition~2.3]{MR2994205}}]\label{lem:BoundaryEstimate}
  Let $\phi \in C^\infty(\R^n)$ be supported in a ball of radius $l>0$ and satisfy
  \begin{equation}\label{eq:grad_bound_localgraph}
    \|\grad \phi\|_\infty \leq cl^{-1}.
  \end{equation}
  Assume that the intersection $\partial \Omega \cap \supp \phi$ is $C^1$ with modulus of continuity $\omega\colon [0, L)\to \R$, with $L\geq 2l$, in the sense of \ref{itm:def_Komega}. 

  Then if $l$ is so small that $\omega(l)\leq c_n$, where $c_n$ depends only on the dimension, it holds for $0<h\leq l$ that
  \begin{equation}
    \biggl| \Tr(\phi(-h^2\Delta_\Omega-1)\phi)_\limminus - \LT{1}{n}\int_\Omega \phi^2(x)\, dx\, h^{-n}  + \frac{1}{4}\LT{1}{n-1}  \int_{\partial \Omega} \phi^2(x) \, d\sigma(x)\, h^{-n+1}\biggr| \leq r(l, h),
  \end{equation}
  where $d\sigma$ denotes the $(n-1)$-dimensional Lebesgue measure on\/ $\partial\Omega$ and the remainder satisfies
  \begin{equation}
    r(l, h) \leq c \Bigl(\frac{l^{n-2}}{h^{n-2}}+\frac{\omega(l)^2 l^{n-1}}{h^{n-1}}+\frac{\omega(l)l^n}{h^n}\Bigr),
  \end{equation}
  where the constant $c$ depends only on that in~\eqref{eq:grad_bound_localgraph}.
\end{lemma}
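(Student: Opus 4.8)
The plan is to follow the local semiclassical analysis of Frank and Geisinger~\cite{MR2885166,MR2994205}, which is exactly the source of the statement; I will only outline the structure. Since the estimate is purely local I would first use a rotation and translation to place the ball containing $\supp\phi$ at the origin and, in the nontrivial case $\partial\Omega\cap\supp\phi\neq\emptyset$, arrange that $0\in\partial\Omega$ with $\{x_n=0\}$ tangent to $\partial\Omega$ there. The $C^1$-graph hypothesis then writes $\partial\Omega\cap\supp\phi=\{x_n=f(x')\}$ with $f(0)=0$, $\grad f(0)=0$ and $|\grad f(x')|\leq\omega(|x'|)\leq\omega(l)$ on the relevant region, whence $|f(x')|\leq\omega(l)\,l$ there. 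So $\Omega\cap\supp\phi$ agrees with the half-space $H:=\{x_n>0\}$ except inside a slab $S$ of thickness at most $2\omega(l)\,l$ about $\{x_n=0\}$, with $|S\cap\supp\phi|\leq c\,\omega(l)\,l^n$. The restriction $\omega(l)\leq c_n$ guarantees that this slab is thin relative to $l$, so that $\Omega\cap\supp\phi$ is a genuine small perturbation of $H\cap\supp\phi$.

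Next I would establish the half-space model, i.e.\ compute $\Tr(\phi(-h^2\Delta_H-1)\phi)_\limminus$ exactly up to an $O(l^{n-2}h^{-n+2})$ error, valid for $0<h\leq l$. This is done either by identifying $-h^2\Delta_H$ via odd reflection across $\{x_n=0\}$ with the odd sector of $-h^2\Delta_{\R^n}$ — so that the localized bulk term is half the reflected one — or by further localizing onto small cubes and invoking the exact semiclassical two-term formula for boxes (Lemma~4.4 of~\cite{GittinsLarson}); either route produces $\LT{1}{n}\int_{H}\phi^2\,dx\,h^{-n}-\tfrac14\LT{1}{n-1}\int_{\{x_n=0\}}\phi^2\,d\sigma\,h^{-n+1}$, the $\tfrac14$ coming from the one-dimensional half-line computation. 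I would then pass from $H$ back to $\Omega$ by the variational principle: Dirichlet--Neumann bracketing across $\partial S$ reduces the difference of the two localized traces to the contribution of the slab $S$, which the Berezin--Lieb inequality bounds by $c\,|S\cap\supp\phi|\,h^{-n}\leq c\,\omega(l)\,l^n h^{-n}$, giving the third term of $r(l,h)$; a more careful comparison of $\int_{\partial\Omega}\phi^2\,d\sigma$ with $\int_{\{x_n=0\}}\phi^2\,d\sigma$, using that the graph of $f$ has slope $\leq\omega(l)$ and that this deviation is quadratic in the slope, upgrades the relevant part of the error to $c\,\omega(l)^2 l^{n-1}h^{-n+1}$. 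The remaining universal term $c\,l^{n-2}h^{-n+2}$ is the IMS/commutator cost of the localization, controlled through $\|\grad\phi\|_\infty\leq cl^{-1}$ in~\eqref{eq:grad_bound_localgraph}. Adding the three contributions gives the stated bound on $r(l,h)$, and when $\partial\Omega\cap\supp\phi=\emptyset$ the boundary term drops out.

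The hard part is the $C^1$-only regularity of $\partial\Omega$: there is no control on the second derivatives of $f$, so $\partial\Omega$ cannot be straightened by a diffeomorphism with bounded derivatives, and the whole comparison must be carried out directly between $\Omega$ and a true half-space, with every geometric error expressed solely through $\omega(l)$ and $l$ and uniformly in $\Omega$. Within this, the delicate point is extracting the sharp $\omega(l)^2 l^{n-1}h^{-n+1}$ contribution rather than a cruder $\omega(l)\,l^{n-1}h^{-n+1}$, which hinges on the cancellation of the first-order-in-slope deviation of the boundary correction.
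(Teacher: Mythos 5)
You should first be aware that the paper contains no proof of Lemma~\ref{lem:BoundaryEstimate}: it is imported, with the error term kept in explicit form, from Frank and Geisinger (\cite[Proposition~1.1]{MR2885166} and \cite[Proposition~2.3]{MR2994205}), and the only original material in the paper concerning it is the remark about the smallness condition $\omega(l)\leq c_n$ and the tracking of the geometric dependence of the constants. There is therefore no in-paper proof to compare your argument against; what can be assessed is whether your sketch is a viable reconstruction of the cited proofs.

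As a reconstruction it is broadly on the right track: reduce to a tangent half-space, compute the half-space model up to $O(l^{n-2}h^{-n+2})$, and charge the discrepancy between $\Omega$ and the half-space to a slab of thickness $\sim\omega(l)\pps l$ whose phase-space volume accounts for the $\omega(l)\pps l^nh^{-n}$ term, with the surface-measure distortion $\sqrt{1+|\grad f|^2}-1=O(\omega(l)^2)$ responsible for the $\omega(l)^2l^{n-1}h^{-n+1}$ term. Two caveats. First, your assertion that the boundary ``cannot be straightened'' under mere $C^1$ regularity misrepresents the cited proofs: Frank and Geisinger do straighten the boundary via the $C^1$ map $(x',x_n)\mapsto(x',x_n-f(x'))$, which is harmless precisely because at the level of quadratic forms only $\grad f$ (bounded by $\omega(l)$) enters, never $D^2f$. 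Your alternative --- sandwiching $\Omega\cap\supp\phi$ between two half-spaces shifted by $\pm c\pps\omega(l)\pps l$ and using monotonicity of $\Omega\mapsto\Tr(\phi(-h^2\Delta_\Omega-1)\phi)_\limminus$ under inclusion of form domains --- is also viable and keeps all errors within the stated budget (note $\omega(l)\pps l^{n-1}h^{-n+1}\leq\omega(l)\pps l^nh^{-n}$ since $h\leq l$), but the ``Dirichlet--Neumann bracketing across $\partial S$'' step as stated is not quite right, since imposing Neumann conditions on $\partial S$ also perturbs the operator outside the slab; the two-sided inclusion argument should replace it. Second, the two genuinely technical inputs --- the half-space formula with the coefficient $\tfrac14\LT{1}{n-1}$ and error $O(l^{n-2}h^{-n+2})$ for a \emph{localized} trace, and the precise mechanism by which the first-order-in-$\omega(l)$ boundary discrepancy is absorbed --- are asserted rather than derived, so what you have is an outline of the cited argument rather than a self-contained proof.
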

\begin{remark}
    Here we shall only make use of Lemma~\ref{lem:BoundaryEstimate} when the boundary of $\Omega$ is either $C^{1,1}$-regular or when $\Omega\in\mathcal{P}_m$ and the boundary is locally a hyperplane, in the latter case we can take $\omega\equiv 0$. In~\cite{MR2885166,MR2994205} it is stated that the smallness assumption on $l$ may depend on $\Omega$, this is however not necessary the relevant local geometry is encoded by $\omega$. Inspection of the proofs in~\cite{MR2885166,MR2994205} yields that one can can take $c_n= \tfrac{1}{4(n-1)}.$
\end{remark}

We shall also need the following lemma which can be viewed as a local version of~\eqref{eq:BerezinIneq}.
\begin{lemma}[{\cite[Lemma~2.1]{MR2885166}}]\label{lem:LocalBerezin}
  For any $\phi \in C^\infty_0 (\R^n)$ and $h>0$ we have that
  \begin{equation}
    \Tr(\phi(-h^2\Delta_\Omega-1)\phi)_\limminus \leq \LT{1}{n} \int_{\Omega}\phi^2(x)\, dx\, h^{-n}.
\end{equation}
\end{lemma}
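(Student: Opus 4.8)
The statement is a localized form of the Berezin--Li--Yau--Lieb inequality, and the plan is to prove it by the classical coherent-state argument. First I would set $A:=\phi(-h^2\Delta_\Omega-1)\phi$, a self-adjoint operator on $L^2(\Omega)$; since $-h^2\Delta_\Omega-1$ has finite-rank negative part (only finitely many Dirichlet eigenvalues lie below $h^{-2}$), $A$ differs by a finite-rank operator from the nonnegative operator $\phi(-h^2\Delta_\Omega-1)_\limplus\phi$, so $\sigma_{\mathrm{ess}}(A)\subset[0,\infty)$ and $A$ has only finitely many negative eigenvalues $\mu_j$, with orthonormal eigenfunctions $u_j$ in the form domain of $A$; thus $w_j:=\phi u_j\in H_0^1(\Omega)$. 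Unfolding the form of $A$,
\[
  \Tr\bigl(\phi(-h^2\Delta_\Omega-1)\phi\bigr)_\limminus=-\sum_{\mu_j<0}\langle u_j,Au_j\rangle=\sum_{\mu_j<0}\Bigl(\|w_j\|_{L^2(\Omega)}^2-h^2\|\grad w_j\|_{L^2(\Omega)}^2\Bigr),
\]
and, extending each $w_j$ by zero to $\R^n$ (legitimate precisely because $w_j\in H_0^1(\Omega)$), this equals $\Tr_{L^2(\R^n)}\bigl(\rho(1+h^2\Delta_{\R^n})\bigr)$, where $\rho:=\sum_{\mu_j<0}|w_j\rangle\langle w_j|=\phi\gamma\phi$ and $\gamma$ is the spectral projection of $A$ onto $(-\infty,0)$.

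Next I would extract the two properties of $\rho$ that drive the estimate: it is a nonnegative finite-rank operator whose range lies in $H_0^1(\Omega)$, hence is supported in $\Omega$; and since $\{u_j\}$ is orthonormal, Bessel's inequality gives $\langle v,\rho v\rangle=\sum_{\mu_j<0}|\langle u_j,\phi v\rangle|^2\leq\int_\Omega\phi^2|v|^2$ for all $v$, i.e.\ $0\leq\rho\leq\psi^2$ as operators on $L^2(\R^n)$, where $\psi:=\mathbbm{1}_\Omega\phi$ and $\int_{\R^n}\psi^2=\int_\Omega\phi^2$. Writing $K:=(1+h^2\Delta_{\R^n})_\limplus$ for the spectral positive part---the Fourier multiplier by $(1-h^2|\xi|^2)_\limplus$, with $0\leq K\leq1$---and using $1+h^2\Delta_{\R^n}\leq K$ together with cyclicity of the trace, its monotonicity on positive operators, and $\rho\leq\psi^2$, one then obtains
\[
  \Tr\bigl(\phi(-h^2\Delta_\Omega-1)\phi\bigr)_\limminus\leq\Tr(\rho K)=\Tr\bigl(K^{1/2}\rho K^{1/2}\bigr)\leq\Tr\bigl(K^{1/2}\psi^2K^{1/2}\bigr)=\Tr(\psi K\psi).
\]
Finally $\psi K\psi=T^*T$ with $T:=K^{1/2}\psi$ Hilbert--Schmidt, so it is trace class with diagonal kernel $\psi(x)^2K(x,x)$, and $K(x,x)=(2\pi)^{-n}\int_{\R^n}(1-h^2|\xi|^2)_\limplus\,d\xi=\LT{1}{n}h^{-n}$ after the substitution $\xi=\eta/h$ and the elementary evaluation of $\int_{|\eta|<1}(1-|\eta|^2)\,d\eta$. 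Hence $\Tr(\phi(-h^2\Delta_\Omega-1)\phi)_\limminus\leq\LT{1}{n}h^{-n}\int_\Omega\phi^2(x)\,dx$, which is the claim; alternatively one may simply invoke~\cite[Lemma~2.1]{MR2885166}.

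The constant computation in the last step is routine. The step requiring the most care is the first one: verifying that $A$ has only finitely many negative eigenvalues and that these genuinely lie in the discrete spectrum (for which it is enough that $A$ is a finite-rank perturbation of the nonnegative operator $\phi(-h^2\Delta_\Omega-1)_\limplus\phi$, so that $\sigma_{\mathrm{ess}}(A)\subset[0,\infty)$), and that the corresponding eigenfunctions are regular enough that $\phi u_j\in H_0^1(\Omega)$, so that both the form identity and the zero-extension to $\R^n$ are valid---this is where the smoothness of $\phi$ enters. The trace manipulations in the second display likewise require a brief check that the operators involved are finite rank or trace class before cyclicity and monotonicity may be applied.
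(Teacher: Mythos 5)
Your proof is correct: the reduction to $\Tr(\rho(1+h^2\Delta_{\R^n}))$ with $0\le\rho\le\psi^2$ supported in $\Omega$, followed by passing to the positive part $K$ of $1+h^2\Delta_{\R^n}$ and evaluating its diagonal kernel, is exactly the standard Berezin--Lieb coherent-state argument, and the constant comes out right. The paper itself offers no proof of this lemma but simply imports it from \cite[Lemma~2.1]{MR2885166}, whose proof is essentially the argument you give, so there is nothing to reconcile.
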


To prove Lemma~\ref{lem:one_sided_two_term_asymptotics} we will need a more refined version of this inequality when the support of $\phi$ is disjoint from the boundary of $\Omega$.

\begin{lemma}[{\cite[Proposition~1.2]{MR2885166}}]\label{lem:localBerezin_bulk}
  Let $\phi\in C^\infty_0(\Omega)$ be supported in a ball of radius $l>0$ and satisfy
  \begin{equation}\label{eq:grad_bound_bulk}
    \|\grad\phi\|_\infty \leq c l^{-1}.
  \end{equation}
  Then, for all $h>0$,
  \begin{equation}
    \biggl| \Tr(\phi (-h^2\Delta_\Omega-1)\phi)_\limminus - \LT{1}{n}\int_\Omega \phi^2(x)\,dx \, h^{-n}\biggr| \leq c\pps l^{n-2}h^{-n+2},
  \end{equation}
  where the constant $c$ depends only on that in~\eqref{eq:grad_bound_bulk}.
\end{lemma}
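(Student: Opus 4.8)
The plan is to reduce to the free Laplacian on $\R^n$ and then produce matching upper and lower bounds by a coherent-state argument, in the spirit of Frank and Geisinger. Since $\supp\phi$ is a compact subset of $\Omega$, every function $\phi u$ (for $u$ in the form domain) is supported in $\supp\phi\subset\Omega$, so $\langle\phi u,(-h^2\Delta_\Omega-1)\phi u\rangle=h^2\|\grad(\phi u)\|^2-\|\phi u\|^2$ with the norms read over $\R^n$; hence $H_\phi:=\phi(-h^2\Delta_\Omega-1)\phi$ and $\phi(-h^2\Delta_{\R^n}-1)\phi$ have the same negative part, and we may work on all of $\R^n$. The upper bound $\Tr(H_\phi)_\limminus\le\LT{1}{n}\int_\Omega\phi^2\,h^{-n}$ is then exactly Lemma~\ref{lem:LocalBerezin} applied on $\R^n$, with no error term. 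Since $\Tr(H_\phi)_\limminus\ge0$, this also settles the case $h\ge l$ outright, as then $\LT{1}{n}\int_\Omega\phi^2\,h^{-n}\le c\,l^nh^{-n}\le c\,l^{n-2}h^{-n+2}$; so the real work is the matching lower bound.

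For that, fix a real $a\in C_0^\infty(B_1)$ with $\|a\|_{L^2}=1$, set $g_q(x)=l^{-n/2}a((x-q)/l)$ and $f_{p,q}(x)=g_q(x)e^{ip\cdot x/h}$, and recall the resolution of identity $(2\pi h)^{-n}\iint|f_{p,q}\rangle\langle f_{p,q}|\,dp\,dq=I$ (which holds because $\|g_q\|_{L^2}=1$, with no restriction on the length scale $l$). With the trial density matrix $\Gamma:=(2\pi h)^{-n}\iint\mathbbm{1}_{\{|p|<1\}}|f_{p,q}\rangle\langle f_{p,q}|\,dp\,dq$ one has $0\le\Gamma\le1$, so the variational principle gives $\Tr(H_\phi)_\limminus\ge-\Tr(H_\phi\Gamma)$. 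Using that $\phi$ and $g_q$ are real, so that the cross term in $|\grad(\phi f_{p,q})|^2$ is purely imaginary and drops out, a direct computation yields
\begin{equation}
  -\Tr(H_\phi\Gamma)=\frac{1}{(2\pi h)^n}\iint\mathbbm{1}_{\{|p|<1\}}\Bigl[(1-|p|^2)\|\phi g_q\|_{L^2}^2-h^2\|\grad(\phi g_q)\|_{L^2}^2\Bigr]\,dp\,dq.
\end{equation}
Carrying out the $p$-integral brings in the constants $\LT{1}{n}$ and $\LT{0}{n}$ times $h^{-n}$; the identity $\int g_q(x)^2\,dq=1$ turns $\int\|\phi g_q\|_{L^2}^2\,dq$ into $\int_\Omega\phi^2$, producing the main term $\LT{1}{n}\int_\Omega\phi^2\,h^{-n}$ exactly, and leaving the remainder $\LT{0}{n}h^{-n+2}\int\|\grad(\phi g_q)\|_{L^2}^2\,dq$. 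Writing $\grad(\phi g_q)=g_q\grad\phi+\phi\grad g_q$ and using $\|\grad\phi\|_\infty\le c l^{-1}$, $\|\phi\|_\infty\le c$ (which follows from the gradient bound together with $\supp\phi\subset B_l$), $|\supp\phi|\le c l^n$, and $\int|\grad g_q(x)|^2\,dq=l^{-2}\|\grad a\|_{L^2}^2$, this remainder is $O(l^{n-2}h^{-n+2})$ — precisely the claimed error. Combining with the upper bound finishes the proof.

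The step I expect to be the main obstacle is fixing the correct length scale for the coherent states (and then tracking the geometric constants carefully). The naive choice of localizing the trial states at scale $h$ is fatal: it makes $\int\|\phi\grad g_q\|_{L^2}^2\,dq\sim h^{-2}\int_\Omega\phi^2\sim h^{-2}l^n$, so the remainder would be $\sim h^{-n}l^n$, of the same order as the main term and hence useless. Localizing instead at the scale $l$ of $\phi$ keeps the internal kinetic energy $h^2\|\grad g_q\|_{L^2}^2\lesssim(h/l)^2$ under control. Two minor points also need checking: the resolution of identity only requires $\|g_q\|_{L^2}=1$, so this rescaling is legitimate; and although $\Gamma$ is not trace class, $\Tr(H_\phi\Gamma)$ is well defined because the negative eigenfunctions of $H_\phi$ are supported in $\overline{\supp\phi}$, so only $q$ in a bounded neighbourhood of $\supp\phi$ actually contribute to the trace.
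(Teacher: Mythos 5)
Your proof is correct. The paper does not prove this lemma itself --- it imports it verbatim from Frank--Geisinger \cite[Proposition~1.2]{MR2885166} --- and your argument is essentially a faithful reconstruction of their coherent-state proof: Berezin--Lieb for the upper bound, and a trial density matrix built from coherent states localized at the scale $l$ of $\supp\phi$ (rather than at scale $h$) for the lower bound, which is exactly the point that makes the remainder come out as $l^{n-2}h^{-n+2}$. The auxiliary observations (that $\|\phi\|_\infty\lesssim 1$ follows from the gradient bound and the support condition, that the resolution of identity holds for any length scale, and that only $q$ near $\supp\phi$ contribute so the trace against $\Gamma$ is finite) are all correctly handled.
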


To control the error terms coming from the applications of the local bounds above we shall need the following inequalities which appear in~\cite{MR2885166} (or with explicitly stated geometric dependence in~\cite{FrankLarson}): For $\Omega \in \K^n$ and $\alpha \in \R$ it holds that
\begin{align}
   \int_{\Omega_*}l(u)^{-2}\,du &\leq c(|\Omega|+|\partial\Omega|)l_0^{-1},\label{eq:bulk_integralbound}\\
   \int_{\Omega^*\setminus \Omega_*}l(u)^{\alpha}\,du &\leq c |\partial\Omega|l_0^{1+\alpha},\label{eq:bdry_integralbound}
 \end{align}
 where $\Omega^*$ is defined as in Lemma~\ref{lem:Localization}, $\Omega^*=\{u\in \R^n: \supp\phi_u \cap \Omega \neq \emptyset\}$, and similarly $\Omega_*:=\{u\in \Omega : \supp\phi_u\subset\Omega\}$. As noted above $\Omega^*$ is essentially an outer parallel set of~$\Omega$. Similarly $\Omega_*$ is essentially an inner parallel set. In particular we note the inclusions $\Omega_*\subset \Omega\subset \Omega^*$.

Using the above we are ready to prove Lemmas~\ref{lem:one_sided_two_term_asymptotics} and~\ref{lem:Asymptotic_Polytopes}.

\begin{proof}[{Proof of Lemma~\ref{lem:one_sided_two_term_asymptotics}}]
  The proof is based on constructing a nested family of regular convex domains $\Omega(\eps)\in \K^n$, for $\eps>0$, such that $\Omega(0) = \Omega$ and $\Omega(\eps) \subset \Omega(\eps')$ if $\eps>\eps'$.

  Define, in the notation introduced in Section~\ref{sec:Prel_convex}, $\Omega(\eps) := (\Omega_\eps)^\eps = (\Omega\sim B_\eps)+B_\eps$, that is the outer parallel set of the inner parallel set of $\Omega$ at distance $\eps>0$. For $0\leq\eps<r(\Omega)$ it is clear from the construction that $\Omega(\eps)$ are non-empty and nested as described above. We also see that $\Omega(\eps)$ satisfies an $\eps$-inner ball condition, and hence its boundary is $C^{1,1}$-regular (see, for instance,~\cite{CaffarelliCabre}). Furthermore, it holds that $D(\Omega(\eps))\leq D(\Omega)$ and $r(\Omega(\eps))=r(\Omega)$.

  By~\eqref{eq:Steinerformula} and the properties of mixed volumes listed in Section~\ref{sec:Prel_convex} we have
  \begin{align}
    \bigl||\Omega^\eps| - |\Omega|- \eps |\partial\Omega|\bigr| = \sum_{j = 2}^n \binom{n}{j} \eps^j W(\underbrace{\Omega, \ldots, \Omega}_{n-j}, \underbrace{B_1, \ldots, B_1}_{j})
  \end{align}
  and 
  \begin{align}
    \sum_{j = 2}^n \binom{n}{j} \eps^j W(\underbrace{\Omega, \ldots, \Omega}_{n-j}, \underbrace{B_1, \ldots, B_1}_{j}) 
    &= 
    \sum_{j = 2}^n \binom{n}{j} \frac{\eps^j}{D(\Omega)^j} W(\underbrace{\Omega, \ldots, \Omega}_{n-j}, \underbrace{D(\Omega)B_1, \ldots, D(\Omega)B_1}_{j})\\[-10pt]
    &\leq
    \sum_{j = 2}^n \binom{n}{j} \frac{\eps^j}{D(\Omega)^j} D(\Omega)^n |B_1| \leq c D(\Omega)^{n-2} \eps^2.
  \end{align}
  Similarly
  \begin{align}
    \bigl||\partial\Omega^\eps|-|\partial\Omega|\bigr|
    = 
    n \sum_{j=1}^{n-1} \binom{n-1}{j} \eps^j W(\underbrace{\Omega, \ldots, \Omega}_{n-j-1}, \underbrace{B_1, \ldots, B_1}_{j+1})
    \leq
     c D(\Omega)^{n-2}\eps.
  \end{align}
  Hence we can conclude that
  \begin{equation}\label{eq:outer_volper_bound}
    |\Omega^\eps| = |\Omega|+ \eps |\partial \Omega| + O(\eps^2) \quad \mbox{and} \quad |\partial\Omega^\eps| = |\partial \Omega| + O(\eps),
  \end{equation}
  where both error terms are uniform on compact subsets of $\K^n$.
  
Moreover, by~\eqref{eq:bound_per_innerparallel} and the corresponding upper bound
 \begin{equation}\label{eq:inner_perbound}
    |\partial\Omega_\eps| = |\partial \Omega | + O(\eps),
  \end{equation}
  where the implicit constant can be bounded from above by a constant times $|\partial\Omega|/r(\Omega)$.

  Combining $\Omega(\eps)\subseteq \Omega$ with~\eqref{eq:outer_volper_bound},~\eqref{eq:inner_perbound} and the inequality $ |\Omega_\eps|\geq |\Omega|-\eps |\partial \Omega|$ yields that
  \begin{equation}\label{eq:vol_per_control}
     |\Omega(\eps)| = |\Omega| + O(\eps^2) \quad \mbox{and} \quad |\partial \Omega(\eps)| = |\partial \Omega| + O(\eps),
  \end{equation}
where again both the error terms are uniform on compact subsets of $\K^n$.

  By the monotonicity of Dirichlet eigenvalues under domain inclusion
  \begin{align}\label{eq:monotonicity_innerapprox}
    \Tr(-h^2\Delta_\Omega-1)_\limminus \geq \Tr(-h^2\Delta_{\Omega(\eps)}-1)_\limminus.
  \end{align}
  The idea is now to apply the methods of~\cite{MR2885166} to each $\Omega(\eps)$, keeping track of how the error terms depend on $\eps$, and in the final step choose $\eps$ appropriately depending on $h$.

  We first observe that if $x\in \partial\Omega(\eps)$ and $\delta\leq \eps/2$ then the set $\partial\Omega(\eps)\cap B_\delta(x)$ is (in the sense above) locally a graph of a $C^{1,1}$-function $f$ satisfying
  \begin{equation}\label{eq:modcontinuity_eps}
     |\grad f(x')-\grad f(y')| \leq \frac{c}{\eps} |x'-y'|,
   \end{equation} 
   where $c$ is a dimensional constant. Indeed, by convexity and the fact that $\Omega$ satisfies a uniform $\eps$-inner ball condition it follows that $f$ is $C^{1,1}$-regular from Propositions~1.1 and~1.2 in~\cite{CaffarelliCabre}. That the constant in the $C^{1,1}$-estimate~\eqref{eq:modcontinuity_eps} behaves like $\eps^{-1}$ is a consequence of scaling: If $f(x)$ can be touched from above and below at each point by a ball of radius $\eps$ then $g(x):=f(\eps x)/\eps$ can at each point be touched from above and below  by a ball of radius~$1$.

  Let $l(u)$ be defined as in~\eqref{eq:local_length_scale} with respect to the set $\Omega(\eps)$ with $l_0\in (0, 1)$ to be chosen later, and $\phi_u$ be the corresponding family of functions (we emphasize that this definition depends on $\eps$ even though this is not reflected in our notation). 

  Consider the set
  \begin{align}
    \Omega^*(\eps) &:= \{u \in \R^n : \supp \phi_u \cap \Omega(\eps) \neq \emptyset\},
  \end{align}
  we note again that $\Omega^*(\eps)$ contains points in the complement of $\Omega(\eps)$. This is precisely the set of $u\in \R^n$ where $\Tr(\phi_u(-h^2\Delta_{\Omega(\eps)}-1)\phi_u)_\limminus$ is non-zero. We split $\Omega^*(\eps)$ into the sets $\Omega_*(\eps):=\{u\in \Omega^*(\eps): \supp \phi_u \subset \Omega(\eps)\}$ and $\Omega_b(\eps):=\Omega^*(\eps)\setminus \Omega_*(\eps)$. 
  The set $\Omega_*(\eps)$ is precisely the set of $u \in \Omega^*(\eps)$ such that $\supp \phi_u \cap \partial\Omega(\eps) = \emptyset$, and $\Omega_b(\eps)$ is the set where the same intersection is non-empty. 

  Let $t^*$ solve the equation $t = \frac{1}{2}\bigl(1+(t^2+l_0^2)^{-1/2}\bigr)^{-1}=l(u)\big|_{\dist(u, \Omega^c)=t}$. By observing that $l(u)\big|_{u\in \Omega^c}= \frac{l_0}{2l_0+2}$ and $0\leq \frac{d}{dt}\bigl(\frac{1}{2}\bigl(1+(t^2+l_0^2)^{-1/2}\bigr)^{-1}\bigr)\leq \frac{1}{2}$ it is clear that $t^*$ is unique, and moreover that $t^* \leq l_0/{\sqrt{3}}$ since
  \begin{equation}
    \frac{1}{2}\bigl(1+\bigl((l_0/{\sqrt{3}})^2+l_0^2\bigr)^{-1/2}\bigr)^{-1} = \frac{l_0}{\sqrt{3}+2l_0} \leq \frac{l_0}{\sqrt{3}}.
  \end{equation}
  By the remarks above $l(u)\geq l_0/4$ for all $u\in \R^n$, and moreover since $\Omega_b(\eps)$ is precisely the set where $l(u)\geq \dist(u, \partial\Omega)$ we find that if $u\in \Omega_b(\eps)$ then $l(u)\leq l_0/{\sqrt{3}}$.

  By Lemma~\ref{lem:Localization} and~\eqref{eq:bulk_integralbound} we have for $0<h<M l_0$ and $\eps\in [0, r(\Omega))$ that
\begin{align}\label{eq:int_split}
    \Tr(-h^2\Delta_{\Omega(\eps)}-1)_\limminus 
  &\geq
    \int_{\Omega_*(\eps)} \Tr(\phi_u(-h^2\Delta_{\Omega(\eps)}-1)\phi_u)_\limminus l(u)^{-n}\, du \\
    & + \int_{\Omega_b(\eps)} \Tr(\phi_u(-h^2\Delta_{\Omega(\eps)}-1)\phi_u)_\limminus l(u)^{-n}\, du 
    - c(|\Omega|+|\partial\Omega|)l_0^{-1}h^{-n+2},\\[-30pt]
\end{align}
where the constant in the error term can be chosen independent of $\eps$ due to~\eqref{eq:vol_per_control}.

If $u\in \Omega_b(\eps)$ then $\dist(u, \partial\Omega)\leq l(u)\leq l_0/{\sqrt{3}}$ and thus $B_{l(u)}(u)\subset B_{2l_0/{\sqrt{3}}}(x)$ for some $x\in \partial\Omega$. If we assume that $l_0 \leq c\eps$ then by the observation that $\partial\Omega(\eps)$ is $C^{1,1}$-regular with the explicit estimate~\eqref{eq:modcontinuity_eps} we can apply Lemma~\ref{lem:BoundaryEstimate} to the second integrand of~\eqref{eq:int_split}, assuming $c$ is small enough (depending only on dimension). By also applying Lemma~\ref{lem:localBerezin_bulk} to the first integrand in~\eqref{eq:int_split} this yields that
\begin{align}\label{eq:integralform_eps_asymp}
    \Tr(-h^2&\Delta_{\Omega(\eps)}-1)_\limminus 
  \geq
    \LT{1}{n}  \int_{\Omega_*(\eps)}  \int_{\Omega(\eps)} \phi_u^2(x) l(u)^{-n}\, dx du\, h^{-n} \\
    & + \int_{\Omega_b(\eps)} \biggl(\LT{1}{n}\int_{\Omega(\eps)} \phi_u^2(x)\, dx\, h^{-n}  - \frac{1}{4}\LT{1}{n-1}  \int_{\partial \Omega(\eps)} \phi^2_u(x) \, d\sigma(x)\, h^{-n+1}\biggr) l(u)^{-n}\, du\\
    & - c h^{-n+1}\int_{\Omega_b(\eps)} \Bigl(h l(u)^{-2}+l(u)\eps^{-2}+ \eps^{-1}h^{-1}l(u)\Bigr) du
    - c(|\Omega|+|\partial\Omega|)l_0^{-1} h^{-n+2},
\end{align}
where we used the $C^1$-modulus of continuity for $\partial\Omega$ in~\eqref{eq:modcontinuity_eps}, and~\eqref{eq:bulk_integralbound},~\eqref{eq:bdry_integralbound} to bound the error terms coming from Lemmas~\ref{lem:Localization} and~\ref{lem:localBerezin_bulk}.

Using~\eqref{eq:Integral_partition_unity}, and~\eqref{eq:bdry_integralbound} we find that~\eqref{eq:integralform_eps_asymp} implies
\begin{align}\label{eq:asymptotics_eps_reg}
  \Tr(-h^2\Delta_{\Omega(\eps)}-1)_\limminus 
   &\geq 
  \LT{1}{n}|\Omega(\eps)|h^{-n}- \frac{\LT{1}{n-1}}{4}|\partial\Omega(\eps)|h^{-n+1}\\
  &\quad - c(|\Omega|+|\partial\Omega|)(h l_0^{-1}+l_0^2 \eps^{-2}+l_0^2 h^{-1}\eps^{-1})h^{-n+1}\\
  &= 
  \LT{1}{n}|\Omega|h^{-n}- \frac{\LT{1}{n-1}}{4}|\partial\Omega|h^{-n+1}\\
  &\quad + (h l_0^{-1}+l_0^2 \eps^{-2}+l_0^2 h^{-1}\eps^{-1} + h^{-1}\eps^2 + \eps)O(h^{-n+1}),
\end{align}
where we in the second step also use~\eqref{eq:vol_per_control}. The final error term of~\eqref{eq:asymptotics_eps_reg} is uniform on compact subsets of $\K^n$ since this is the case for all the error terms leading up to the estimate.

In the construction above we have required that $h\leq Ml_0$ and $l_0/c \leq \eps < r(\Omega)$, for a dimensional constant $c$, and $l_0< 1$. Setting $l_0 = c h^\alpha$, $M=1/c$, and $\eps = h^\beta$ for some $0<\beta\leq \alpha<1$ we find that our assumptions are satisfied for all $0<h< \min\{1, r(\Omega)^{1/\beta}\}$. With these choices the expression in the parenthesis of the last term in~\eqref{eq:asymptotics_eps_reg} becomes
\begin{equation}
  h l_0^{-1}+l_0^2 \eps^{-2}+l_0^2 h^{-1}\eps^{-1} + h^{-1}\eps^2 + \eps \lesssim h^{1-\alpha}+h^{2\alpha-2\beta}+h^{2\alpha-1-\beta}+h^{2\beta-1}+ h^{\beta}.
\end{equation}
Choosing $\alpha=6/7, \beta=4/7$ we find
\begin{equation}
  h^{1-\alpha}+h^{2\alpha-2\beta}+h^{2\alpha-1-\beta}+h^{2\beta-1}+ h^{\beta} \lesssim h^{1/7}.
\end{equation}

By~\eqref{eq:monotonicity_innerapprox}, and since the the error term in~\eqref{eq:asymptotics_eps_reg} is uniform on compact subsets of $\K^n$, this completes the proof of Lemma~\ref{lem:one_sided_two_term_asymptotics} for $\gamma=1$. As noted above the statement for $\gamma>1$ follows from an application of the Aizenman--Lieb identity. 
\end{proof}

\begin{proof}[Proof of Lemma~\ref{lem:Asymptotic_Polytopes}]
  Fix $\Omega\in \mathcal{P}_m$. By Lemma~\ref{lem:one_sided_two_term_asymptotics} we only need to prove the corresponding upper bound for $\Tr(-h^2\Delta_\Omega-1)_\limminus$. The main idea of the proof is similar to that used above for the regular sets $\Omega(\eps)$. However, since the boundary is now not regular enough to use Lemma~\ref{lem:BoundaryEstimate} close to every point we split our domain of integration into three parts: Define
  \begin{align}
      \Omega^* &:= \{u\in \R^n: \supp \phi_u \cap \Omega \neq \emptyset\},\\
      \Omega_* &:=  \{u \in \R^n: \supp \phi_u \subset \Omega\},\\
      \Omega_b &:= \{u \in \Omega^*: \supp \phi_u \cap \partial\Omega \mbox{ is a piece of a hyperplane}\},\\
      \Omega_s &:= \Omega^*\setminus (\Omega_* \cup \Omega_b).
  \end{align} 
  The set $\Omega^*$ is again the set of $u\in \R^n$ where the localized trace $\Tr(\phi_u(-h^2\Delta_\Omega-1)\phi_u)_\limminus$ is non-zero. The set $\Omega_*$ is the bulk of $\Omega$, where the effect from the boundary is not felt. Finally $\Omega_b$ and $\Omega_s$ are the remaining parts of $\Omega^*$. The first set $\Omega_b$ is where the intersection of $\supp \phi_u$ with the boundary consists of part of a single face of $\Omega$, and hence we can apply Lemma~\ref{lem:BoundaryEstimate} with $\omega\equiv 0$. The second set $\Omega_s$ is where the intersection of $\supp \phi_u$ with the boundary contains pieces of several faces of $\Omega$, we shall show that the contribution from this set is negligible in the limit $h\to 0^\limplus$.

  By Lemma~\ref{lem:Localization} and~\eqref{eq:bulk_integralbound} we have that, for $0<h\leq l_0$,
  \begin{align}
    \Tr(-h^2\Delta_\Omega-1)_\limminus 
    &\leq 
    \int_{\Omega_*}\Tr(\phi_u(-h^2\Delta_\Omega-1)\phi_u)_\limminus l(u)^{-n}\,du\\
    &\quad 
    + \int_{\Omega_b}\Tr(\phi_u(-h^2\Delta_\Omega-1)\phi_u)_\limminus l(u)^{-n}\,du\\
    &\quad 
    + \int_{\Omega_s}\Tr(\phi_u(-h^2\Delta_\Omega-1)\phi_u)_\limminus l(u)^{-n}\,du + c(|\Omega|+|\partial\Omega|) l_0^{-1} h^{-n+2}.
  \end{align}
  We estimate the first and third terms using Lemma~\ref{lem:LocalBerezin}, and apply Lemma~\ref{lem:BoundaryEstimate} with $\omega\equiv 0$ to the integrand of the second, this yields
  \begin{align}
    \Tr(-h^2\Delta_\Omega-1)_\limminus 
    &\leq \LT{1}{n}\int_{\Omega^*}\int_\Omega \phi_u(x)^2 l(u)^{-n}\,dx du\,h^{-n}\\
    &
     - \frac{\LT{1}{n-1}}{4}h^{-n+1}\int_{\Omega_b\cup \Omega_s}\int_{\partial\Omega}\phi_u(x)^2 l(u)^{-n}\,d\sigma(x)du\\
     &
      + \frac{\LT{1}{n-1}}{4}h^{-n+1}\int_{\Omega_s}\int_{\partial\Omega}\phi_u(x)^2 l(u)^{-n}\,d\sigma(x)du + c(|\Omega|+|\partial\Omega|)l_0^{-1}h^{-n+2}.\\[-25pt]
  \end{align}
  Here we have added and subtracted the boundary term integrated over $\Omega_s$, and used~\eqref{eq:bdry_integralbound} to bound the remainder from our application of Lemma~\ref{lem:BoundaryEstimate}. Using~\eqref{eq:Integral_partition_unity} we obtain that
  \begin{align}\label{eq:final_upperbound_polygons}
    \Tr(-h^2\Delta_\Omega-1)_\limminus 
    &\leq \LT{1}{n}|\Omega|h^{-n}- \frac{\LT{1}{n-1}}{4}|\partial\Omega|h^{-n+1}\\
     &
      + \frac{\LT{1}{n-1}}{4}h^{-n+1}\int_{\Omega_s}\int_{\partial\Omega}\phi_u(x)^2 l(u)^{-n}\,d\sigma(x)du + c(|\Omega|+|\partial\Omega|)l_0^{-1}h^{-n+2}.\\[-25pt]
  \end{align}

  Using~\eqref{eq:Assumptions_phi} and the convexity of $\Omega$ it holds that
  \begin{equation}
    \int_{\Omega_s}\int_{\partial\Omega}\phi_u(x)^2 l(u)^{-n}\,d\sigma(x)du \leq c\int_{\Omega_s}|{\supp \phi_u} \cap \partial\Omega| l(u)^{-n}\,du \leq c \int_{\Omega_s}l(u)^{-1}\,du \leq c|\Omega_s|l_0^{-1},
  \end{equation}
  where we used that $l(u)\geq l_0/4$. We want to prove that we can choose $l_0$ such that
  \begin{equation}\label{eq:final_error_polygons}
    h^{-n+1}l_0^{-1}|\Omega_b| + (|\Omega|+|\partial\Omega|)l_0^{-1}h^{-n+2} = o(h^{-n+1})
  \end{equation}
  uniformly for $\Omega$ in compact subsets of $\mathcal{P}_m$. If we can prove that such a choice is possible the combination of~\eqref{eq:final_upperbound_polygons} and Lemma~\ref{lem:one_sided_two_term_asymptotics} implies the claimed asymptotic expansion for $\gamma=1$. As above an application of the Aizenman--Lieb identity completes the proof for all $\gamma>1$.

  Our aim is to show that $|\Omega_b|$ is small, specifically we shall show that it is $\sim l_0^2$. To this end we shall prove that $\Omega_b$ is contained in an $l_0$-neighbourhood of the $(n-2)$-dimensional faces of $\Omega$. 

  Take $u\in \Omega_s$. By definition there are two points $x_1, x_2\in B_{l(u)}(u)\cap \partial\Omega$ such that $x_1, x_2$ belong to two different faces of $\Omega$ (otherwise $u$ would be in $\Omega_b$). Let $x_0$ be a point in $\Omega$ such that $B_{r(\Omega)}(x_0)\subset \Omega$. Consider the plane spanned by the points $x_0, x_1, x_2$, noting that $x_0, x_1, x_2$ cannot lie on a line since by convexity this would imply that $x_0\in \Omega^c$ which is a contradiction. Without loss of generality we can assume that $x_0$ is the origin. Since $|x_1-x_2|\leq 2l_0/{\sqrt{3}}$ we can if $l_0\leq r(\Omega)$ also assume that $x_1, x_2$ are in the same half-plane~$H$. 

  Let $\Omega'$ be the polygon obtained as the intersection of $\Omega$ with this plane. Clearly $r(\Omega')\geq r(\Omega)$ and $D(\Omega')\leq D(\Omega)$. We also note that the segment of $\partial\Omega'\cap H$ connecting $x_1, x_2$ must contain a point belonging to an $(n-2)$-dimensional face of $\Omega$. Let $x'$ be any such point. By convexity $\Omega'$ contains the open triangle which has one vertex at $x'$ and the other two on $\partial B_{r(\Omega)}(x_0)\cap L$, where $L$ is the line through $x_0$ perpendicular to that through $x_0$ and $x'$. In other words we consider the isosceles triangle with one side being a diameter of the disk $B_{r(\Omega)}(x_0)$ and symmetry axis being the segment from $x_0$ to $x'$. As $x_1, x_2\in \partial\Omega$ they are necessarily in the complement of this triangle. Since $|x_1-x_2|\leq 2l_0/{\sqrt{3}}$ and $|x_0-x'|\leq D(\Omega)$ the convexity of $\Omega'$ and elementary trigonometry gives us that 
  \begin{equation}
       \max\{|x_1-x'|, |x_2-x'|\} \leq  \frac{cD(\Omega)}{r(\Omega)} l_0.
   \end{equation}

  We can thus conclude that $\Omega_b$ is contained in a $\tfrac{c D(\Omega)}{r(\Omega)}l_0$-neighbourhood of the $(n-2)$-dimensional faces of $\Omega$. Let $\{F_k\}_k$ denote the collection of these faces. There are fewer than $\binom{m}{2}$ such faces and each of them is contained in a subset of an $(n-2)$-dimensional affine subspace of $\R^n$ whose diameter is less than $D(\Omega)$. Hence we find that
  \begin{align}
    |\Omega_b| &\leq \bigl|\bigl\{u\in \R^n: \dist(u, \cup_{k}F_k)\leq \tfrac{c D(\Omega)}{r(\Omega)}l_0\bigr\}\bigr| \\ 
    &\leq \sum_{k}\bigl|\bigl\{u\in \R^n: \dist(u, F_k) \leq \tfrac{c D(\Omega)}{r(\Omega)} l_0\bigr\}\bigr|\\
    &\leq \binom{m}{2} \bigl|\bigl\{u\in \R^n: \dist(u, \hat F)< \tfrac{c D(\Omega)}{r(\Omega)} l_0\bigr\}\bigr| \leq \tfrac{c D(\Omega)^n}{r(\Omega)^2} l_0^{2},
  \end{align}
  where $\hat F = \{u \in \R^n: u_1=u_2=0, |u|\leq 2 D(\Omega)\}$.

  Returning to~\eqref{eq:final_error_polygons} we can conclude that, with $l_0=h^{1/3}$,
  \begin{equation}
    h^{-n+1}(l_0^{-1}|\Omega_b|+(|\Omega|+|\partial\Omega|)hl_0^{-2}) \leq c h^{-n+4/3}\bigl(\tfrac{D(\Omega)^n}{r(\Omega)^2}+|\Omega|+|\partial\Omega|\bigr).
  \end{equation}
  As the choice of $l_0$ clearly fulfils the requirements $h\leq l_0\leq \min\{1, r(\Omega)\}$ as soon as $h\leq \min\{1, r(\Omega)^3\}$ this completes the proof of Lemma~\ref{lem:Asymptotic_Polytopes}.
\end{proof}


\section{Maximizing Riesz means over disjoint unions of convex domains}\label{sec:Disjoint_unions}
In this section we show that our results are unchanged if one allows also for disjoint unions of convex domains. 
We begin by proving that the result remains true if one allows two convex components.

\begin{lemma}\label{lem:Two_comp}
  Let $\A$ be a closed subset of $\K^n$ which is invariant under dilations and satisfies the assumption in~\ref{itm:Prop_II} of Proposition~\ref{prop:Asymptotic_Shape_Thm}. Fix $\gamma\geq 1$ and let $\Omega_{\specParam, \gamma}(\A^2)$ denote any extremal domain of the shape optimization problem
  \begin{equation}
    \sup\{\Tr(-\Delta_\Omega-\specParam)_\limminus^\gamma: |\Omega|=1,\ \Omega= \Omega_1 \cup \Omega_2,\ \Omega_1\cap \Omega_2 =\emptyset,\ \mbox{and } \Omega_j\in \A \mbox{ or }\Omega_j=\emptyset\}.
  \end{equation}
  Let also $\Omega_{\specParam}^1$ denote the largest of the two components of $\Omega_{\specParam, \gamma}(\A^2)$. 

  For any sequence $\{\specParam_j\}_{j\geq 1}\uparrow \infty$ the corresponding sequence $\{\Omega_{\specParam_j}^1\}_{j\geq 1}$ has a subsequence which, up to rigid transformations, converges in $\A$. Moreover, $\Omega_\infty$ the limit of such a subsequence has unit measure and minimizes the perimeter in $\A$:
  \begin{equation}
    |\partial\Omega_\infty|=\inf\{ |\partial\Omega|: \Omega \in \A, |\Omega|=1\}.
  \end{equation}
\end{lemma}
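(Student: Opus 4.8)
The plan is to follow the strategy of Proposition~\ref{prop:Asymptotic_Shape_Thm}, exploiting two structural facts. First, for a disjoint union the Riesz mean is additive,
\[
  \Tr(-\Delta_{\Omega_1\cup\Omega_2}-\specParam)_\limminus^\gamma = \Tr(-\Delta_{\Omega_1}-\specParam)_\limminus^\gamma + \Tr(-\Delta_{\Omega_2}-\specParam)_\limminus^\gamma,
\]
since the spectrum of the union is the union (with multiplicity) of the spectra. Second, the dilation-invariance of $\A$ yields the scaled isoperimetric bound $|\partial\Omega|\geq |\Omega|^{(n-1)/n}P_*$ for every $\Omega\in\A$, where $P_*:=\inf\{|\partial\Omega|:\Omega\in\A,\,|\Omega|=1\}$; this infimum is attained at some $\Omega_0\in\A$ by the Blaschke selection theorem and closedness of $\A$. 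Existence of $\Omega_{\specParam,\gamma}(\A^2)$ follows from a straightforward adaptation of Lemma~\ref{lem:Existence_of_maximizer} together with the a priori bounds below (a component whose measure degenerates along a maximizing sequence is simply discarded in the limit). Throughout I fix a sequence $\specParam_j\uparrow\infty$, write $\Omega_{\specParam_j,\gamma}(\A^2)=\Omega_1^{(j)}\cup\Omega_2^{(j)}$ with $|\Omega_1^{(j)}|\geq \tfrac12\geq|\Omega_2^{(j)}|$, and translate each component so its barycentre is the origin.

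First I would prove a uniform perimeter bound for the larger component. Comparing with the competitor $\Omega_0\cup\emptyset$, maximality gives $\Tr(-\Delta_{\Omega_0}-\specParam_j)_\limminus^\gamma\leq\sum_i\Tr(-\Delta_{\Omega_i^{(j)}}-\specParam_j)_\limminus^\gamma$. Applying Theorem~\ref{thm:Improved_Berezin_Convex} to each component contributing a nonzero term, using $\sum_i|\Omega_i^{(j)}|=1$, and applying the lower bound of Lemma~\ref{lem:one_sided_two_term_asymptotics} to $\Omega_0$, the leading Weyl terms cancel and rearranging gives $|\partial\Omega_1^{(j)}|\leq |\partial\Omega_0|/(4c(\gamma,n))+o(1)$. (One first notes that $\Omega_1^{(j)}$ must contribute a nonzero term for $j$ large: otherwise the total is at most $\LT{\gamma}{n}|\Omega_2^{(j)}|\specParam_j^{\gamma+n/2}\leq\tfrac12\LT{\gamma}{n}\specParam_j^{\gamma+n/2}$ by \eqref{eq:BerezinIneq}, contradicting $\Tr(-\Delta_{\Omega_0}-\specParam_j)_\limminus^\gamma\sim\LT{\gamma}{n}\specParam_j^{\gamma+n/2}$.) Since in addition $|\Omega_1^{(j)}|\geq\tfrac12$, the estimates \eqref{eq:twosided_inradius_bound} and \eqref{eq:diameter_bound} bound $r(\Omega_1^{(j)})$ below and $D(\Omega_1^{(j)})$ above, so $\{\Omega_1^{(j)}\}$ is precompact; passing to a subsequence, $\Omega_1^{(j)}\to\Omega_\infty^1\in\A$, $|\Omega_1^{(j)}|\to a_\infty\in[\tfrac12,1]$, and $|\partial\Omega_\infty^1|\leq|\partial\Omega_0|/(4c(\gamma,n))$.

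Next I would upgrade this using the uniform two-term expansion of assumption~\ref{itm:Prop_II}. As $a_\infty\geq\tfrac12>0$, the rescalings $|\Omega_1^{(j)}|^{-1/n}\Omega_1^{(j)}\in\A$ lie in a compact subset of $\A$, so \ref{itm:Prop_II} applies to $\Omega_1^{(j)}$ with a uniform remainder; bounding the $\Omega_2^{(j)}$-contribution by \eqref{eq:BerezinIneq} and using $|\Omega_1^{(j)}|+|\Omega_2^{(j)}|=1$, comparison with $\Omega_0$ gives $|\partial\Omega_1^{(j)}|\leq|\partial\Omega_0|+o(1)$, hence $|\partial\Omega_\infty^1|\leq P_*$. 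It remains to show $a_\infty=1$, after which the isoperimetric bound $|\partial\Omega_\infty^1|\geq a_\infty^{(n-1)/n}P_*=P_*$ forces $|\partial\Omega_\infty^1|=P_*$. Suppose instead $a_\infty<1$. Then $|\Omega_2^{(j)}|\to 1-a_\infty>0$, and (arguing as before via \eqref{eq:BerezinIneq}) $\Omega_2^{(j)}$ must contribute a nonzero term for $j$ large, so the first step also bounds $|\partial\Omega_2^{(j)}|$; hence $\Omega_2^{(j)}$ is precompact and, along a further subsequence, $\Omega_2^{(j)}\to\Omega_\infty^2\in\A$ with $|\Omega_\infty^2|=1-a_\infty$. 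Now assumption~\ref{itm:Prop_II} applies uniformly to the rescalings of \emph{both} components, and comparison with $\Omega_0$ yields $|\partial\Omega_\infty^1|+|\partial\Omega_\infty^2|\leq P_*$; but the isoperimetric bound gives $|\partial\Omega_\infty^1|+|\partial\Omega_\infty^2|\geq\bigl(a_\infty^{(n-1)/n}+(1-a_\infty)^{(n-1)/n}\bigr)P_*$, and the strict concavity of $t\mapsto t^{(n-1)/n}$ together with $0^{(n-1)/n}=0$ gives $a_\infty^{(n-1)/n}+(1-a_\infty)^{(n-1)/n}>1$ for $a_\infty\in(0,1)$, a contradiction. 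Thus $a_\infty=1$ and the proof is complete.

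The main obstacle is this last step: excluding a second component of non-negligible measure in the limit. It forces one to establish precompactness of the (suitably rescaled) smaller component — which requires first knowing that it contributes a nonzero term, so that the a priori perimeter bound applies to it — and then to play the second-order asymptotics against the strict super-additivity $t^{(n-1)/n}+(1-t)^{(n-1)/n}>1$ that is a consequence of the dilation-invariance of $\A$. The several degenerate cases in which a component contributes nothing to the Riesz mean must be tracked throughout, but each is easily ruled out for large $j$ by the crude Berezin inequality \eqref{eq:BerezinIneq}.
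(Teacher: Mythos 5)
Your proof is correct and follows essentially the same route as the paper: additivity of the Riesz mean, the two-term Berezin bound to get uniform perimeter bounds and precompactness of the components, the uniform asymptotics of assumption (ii), and the strict super-additivity $\eta^{(n-1)/n}+(1-\eta)^{(n-1)/n}>1$ for $\eta\in(0,1)$ (via dilation invariance) to exclude a second component of non-negligible measure. The only cosmetic difference is that the paper, once $|\Omega^1_{\specParam_j}|\to 1$ is established, concludes by noting that the rescaled large component solves the one-component problem at $\specParam'=\specParam|\Omega^1_\specParam|^{2/n}$ and invokes Proposition~\ref{prop:Asymptotic_Shape_Thm} directly, whereas you re-derive the perimeter estimates for the component by hand; both are fine.
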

\begin{proof}[Proof of Lemma~\ref{lem:Two_comp}]
  Fix $\gamma \geq 1$ and let $\Omega_{\specParam,\gamma}(\A^2) = \Omega_\specParam = \Omega^1_\specParam \cup \Omega^2_\specParam$. Assume without loss of generality that $|\Omega^1_\specParam| \geq 1/2$. Since the Riesz mean is additive under disjoint unions the two components must be maximizers for the shape optimization problems among domains in $\A$ of their respective measure. After rescaling to unit measure one finds that $\Omega_\specParam^1$ solves the one-component optimization problem at $\specParam'= \specParam |\Omega_\specParam^1|^{2/n}$. Thus by Proposition~\ref{prop:Asymptotic_Shape_Thm} we are done as soon as we can show that $|\Omega_{\specParam_j}^1|\to 1$ as $j\to \infty$.

  After possibly passing to a subsequence of $\{\specParam_j\}_{j\geq 1}$ we have two possibilities: 

  {\noindent\bf Case 1:} $|\Omega^2_{\specParam_j}|\to 0$ as $j \to \infty$. In which case we are done.\smallskip

  {\noindent\bf Case 2:} $|\Omega_{\specParam_j}^2|\geq c>0$. Since Riesz means are additive under disjoint unions we have that the bound in Theorem~\ref{thm:Improved_Berezin_Convex} holds also in our current setting: Sum the corresponding bounds for the components of the disjoint union. Hence by arguing as in the first part of the proof of Proposition~\ref{prop:Asymptotic_Shape_Thm} we find that $|\partial\Omega_{\specParam_j}|\leq c.$
  Thus both sequences $\{\Omega_{\specParam_j}^1\}_{j\geq 1}, \{\Omega_{\specParam_j}^2\}_{j\geq 1}$ are after translation contained in a compact subset of $\K^n$. Hence our assumptions imply that
    \begin{align}
       \Tr(-\Delta_{\Omega_{\specParam_j}}-\specParam_j)_\limminus^\gamma &= \Tr(-\Delta_{\Omega_{\specParam_j}^1}-\specParam)_\limminus^\gamma+\Tr(-\Delta_{\Omega^2_{\specParam_j}}-\specParam_j)_\limminus^\gamma\\
       &=
        \LT{\gamma}{n} |\Omega_{\specParam_j}|\specParam_j^{\gamma+n/2}- \frac{\LT{\gamma}{n-1}}{4}|\partial\Omega_{\specParam_j}|\specParam_j^{\gamma+ (n-1)/2}+o(\specParam_j^{\gamma+ (n-1)/2}),
     \end{align} 
     as $j\to \infty$.
     Arguing as in the proof of Proposition~\ref{prop:Asymptotic_Shape_Thm} we find that $\Omega_{\specParam_j}$ converges to a domain which minimizes the perimeter among domains with at most two components, each of which is in $\A$. If $\Omega'=\Omega_1' \cup \Omega_2'$ with $\Omega_j'\in \A$ it is clear that the perimeter of $\Omega'$ is minimal when the perimeter of the two components are minimizers of the perimeter in $\A$ among sets of their respective measure. By scaling we find that $|\partial\Omega'|=(|\Omega_1'|^{(n-1)/n}+|\Omega_2'|^{(n-1)/n})\inf\{|\partial\Omega|: \Omega\in \A, |\Omega|=1\}$. Since $\eta^{(n-1)/n}+(1-\eta)^{(n-1)/n}\geq 1$ with equality if and only if $\eta=0$ or $1$ we find that any domain minimizing the perimeter must have only one component. This contradicts the assumption that $|\Omega_{\specParam_j}^2|\geq c$, and hence completes the proof.
\end{proof}

Using the same idea as above it is not difficult to prove the corresponding result when any fixed and finite number of components is allowed. However, our goal is here to show that this restriction is in fact not necessary and we can allow for an arbitrary number of components. The only reason to first prove the two-component case is that it will be used in the proof of the general result.

\begin{corollary}\label{cor:Disjoint_unions}
  Let $\A$ be a closed subset of $\K^n$ which is invariant under dilations and satisfies the assumption in~\ref{itm:Prop_II} of Proposition~\ref{prop:Asymptotic_Shape_Thm}. Fix $\gamma\geq 1$ and let $\Omega_{\specParam, \gamma}(\A^\infty)$ denote any extremal domain of the shape optimization problem
  \begin{equation}
    \sup\{\Tr(-\Delta_\Omega-\specParam)_\limminus^\gamma: |\Omega|=1,\ \Omega= \textstyle{\bigcup_{k\geq 1}} \Omega_k,\ \Omega_k \in \A,\ \Omega_k\cap \Omega_{k'} =\emptyset \mbox{ if } k\neq k'\}.
  \end{equation}
  Let also $\Omega_{\specParam}^1$ denote the largest of the components of $\Omega_{\specParam, \gamma}(\A^\infty)$. 

  For any sequence $\{\specParam_j\}_{j\geq 1}\uparrow \infty$ the corresponding sequence $\{\Omega_{\specParam_j}^1\}_{j\geq 1}$ has a subsequence which, up to rigid transformations, converges in $\A$. Moreover, $\Omega_\infty$ the limit of such a subsequence has unit measure and minimizes the perimeter in $\A$:
  \begin{equation}
    |\partial\Omega_\infty|=\inf\{ |\partial\Omega|: \Omega \in \A, |\Omega|=1\}.
  \end{equation}
\end{corollary}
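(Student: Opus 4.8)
The plan is to reduce the statement to Proposition~\ref{prop:Asymptotic_Shape_Thm} (or rather to re-run its proof) for the largest component $\Omega^1_\specParam$ of $\Omega_{\specParam,\gamma}(\A^\infty)$, after first showing that this component carries asymptotically all of the unit mass. Write $\Omega_\specParam=\bigcup_k\Omega^k_\specParam$ with $|\Omega^1_\specParam|\ge|\Omega^2_\specParam|\ge\cdots$, fix a domain $\Omega_0\in\A$ of unit measure realizing $P_{\A}:=\inf\{|\partial\Omega|:\Omega\in\A,\ |\Omega|=1\}$, and call a component \emph{active} if $r(\Omega^k_\specParam)>\tfrac{\pi}{2\sqrt{\specParam}}$; by Theorem~\ref{thm:Improved_Berezin_Convex} exactly the active components contribute to the Riesz mean. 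Since $\Omega_0$ is an admissible one-component configuration, maximality gives $\Tr(-\Delta_{\Omega_\specParam}-\specParam)^\gamma_\limminus\ge\Tr(-\Delta_{\Omega_0}-\specParam)^\gamma_\limminus$, and the additivity of the Riesz mean over the union lets us test this against Theorem~\ref{thm:Improved_Berezin_Convex} and Lemma~\ref{lem:one_sided_two_term_asymptotics} component by component. First I would extract two $\specParam$-uniform bounds: summing $\Tr(-\Delta_{\Omega^k_\specParam}-\specParam)^\gamma_\limminus\le\LT{\gamma}{n}|\Omega^k_\specParam|\specParam^{\gamma+n/2}$ shows the total measure of the inactive components is $O(\specParam^{-1/2})=o(1)$ (each summand $\LT{\gamma}{n}|\Omega^k_\specParam|\specParam^{\gamma+n/2}-\Tr(-\Delta_{\Omega^k_\specParam}-\specParam)^\gamma_\limminus$ is nonnegative and, for an inactive component, equals $\LT{\gamma}{n}|\Omega^k_\specParam|\specParam^{\gamma+n/2}$), while summing the sharper bound of Theorem~\ref{thm:Improved_Berezin_Convex} over the active components yields $\sum_{k\ \mathrm{active}}|\partial\Omega^k_\specParam|\le C$ with $C$ independent of $\specParam$. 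By \eqref{eq:twosided_inradius_bound} and \eqref{eq:diameter_bound}, any active component of measure $\ge\delta$ then has inradius $\ge\delta/C$ and diameter bounded in terms of $\delta$ only, so after translating barycentres to the origin all such components lie in one fixed compact subset $K_\delta\subset\A$.

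The core step is to show $|\Omega^1_\specParam|\to1$. Fix small $\delta>0$ and split the components into $I_\delta$ (active with measure $\ge\delta$) and $J_\delta$ (all others); note $|I_\delta|\le\delta^{-1}$. Applying the uniform two-term expansion of assumption~\ref{itm:Prop_II} on $K_\delta$ to each $k\in I_\delta$ and the plain Berezin bound to the rest, summing, and comparing with the lower bound of Lemma~\ref{lem:one_sided_two_term_asymptotics} for $\Omega_0$, I would obtain $\sum_{k\in I_\delta}|\partial\Omega^k_\specParam|\le P_{\A}+o(1)$; the isoperimetric inequality inside $\A$, $|\partial\Omega|\ge P_{\A}|\Omega|^{(n-1)/n}$ for $\Omega\in\A$ — here dilation invariance of $\A$ enters — then gives $\sum_{k\in I_\delta}|\Omega^k_\specParam|^{(n-1)/n}\le1+o(1)$. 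On the other hand the mass in $J_\delta$ is $\le c\,\delta^{1/n}+o(1)$: the inactive part is $o(1)$ by the first step, and for the small active components the bound $\sum|\partial\Omega^k_\specParam|\le C$ together with the isoperimetric inequality and $|\Omega^k_\specParam|<\delta$ forces $\sum|\Omega^k_\specParam|\le c\,\delta^{1/n}$. Using $|\Omega^k_\specParam|^{(n-1)/n}\ge|\Omega^k_\specParam|$ for $k\ne1$ and collecting terms yields $|\Omega^1_\specParam|^{(n-1)/n}-|\Omega^1_\specParam|\le c\,\delta^{1/n}+o(1)$; letting $\specParam\to\infty$ and then $\delta\to0$, and observing from the case $I_\delta=\varnothing$ that $|\Omega^1_\specParam|$ cannot tend to $0$, forces $|\Omega^1_\specParam|\to1$ (in particular $\Omega^1_\specParam$ is active for large $\specParam$).

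With $|\Omega^1_\specParam|\to1$ and $|\partial\Omega^1_\specParam|\le C$, the sets $\Omega^1_\specParam$ after translation lie in a compact subset of $\A$, so along any $\specParam_j\uparrow\infty$ a subsequence of $\Omega^1_{\specParam_j}$ converges to some $\Omega_\infty\in\A$ with $|\Omega_\infty|=1$. Bounding $\Tr(-\Delta_{\Omega^1_{\specParam_j}}-\specParam_j)^\gamma_\limminus=\Tr(-\Delta_{\Omega_{\specParam_j}}-\specParam_j)^\gamma_\limminus-\sum_{k\ge2}\Tr(-\Delta_{\Omega^k_{\specParam_j}}-\specParam_j)^\gamma_\limminus$ from below by $\Tr(-\Delta_{\Omega_0}-\specParam_j)^\gamma_\limminus-\LT{\gamma}{n}\bigl(1-|\Omega^1_{\specParam_j}|\bigr)\specParam_j^{\gamma+n/2}$ and comparing with the uniform two-term expansion for $\Omega^1_{\specParam_j}$ gives $|\partial\Omega^1_{\specParam_j}|\le P_{\A}+o(1)$, hence $|\partial\Omega_\infty|\le P_{\A}$, which together with the definition of $P_{\A}$ gives equality. (Equivalently, one checks that each component is a maximizer among domains of $\A$ of its measure, so $|\Omega^1_\specParam|^{-1/n}\Omega^1_\specParam$ solves the one-component problem at $\specParam'=\specParam|\Omega^1_\specParam|^{2/n}\to\infty$, and applies Proposition~\ref{prop:Asymptotic_Shape_Thm} directly.)

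The main difficulty is precisely the middle step: handling a maximizer with a priori arbitrarily many components. The sharp Weyl second term, which one needs in order to pin down $|\partial\Omega_\infty|=P_{\A}$, is only available through assumption~\ref{itm:Prop_II} on compact families of domains; one must therefore isolate the finitely many ``macroscopic'' components, treat them with the two-term expansion, and bound the remaining small or inactive components by the cruder Berezin inequality while showing that they carry a fraction of the mass that vanishes as $\delta\to0$ — and it is the $\delta^{1/n}$ gain coming from the isoperimetric inequality on the small components that makes this vanishing quantitative enough to close the argument.
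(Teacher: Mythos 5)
Your proposal is correct, and the key step --- showing $|\Omega^1_{\specParam}|\to 1$ --- is argued by a genuinely different mechanism than in the paper. The paper first proves a separate two-component lemma (Lemma~\ref{lem:Two_comp}) and then bootstraps it: after excluding that all components are of size $\sim\specParam^{-n/2}$ via the isoperimetric inequality, it observes that each tail union $\bigcup_{k\geq l}\Omega^k_{\specParam}$ is itself a maximizer of the same problem at its own measure, applies the two-component result to consecutive pairs to get $|\Omega^{k+1}_\specParam|\leq \tfrac{\eps}{2-\eps}|\Omega^k_\specParam|$, and sums the resulting geometric series to contradict $|\Omega_\specParam|=1$. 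You instead work in one shot: you isolate the finitely many components of measure $\geq\delta$ (which live in a fixed compact subset of $\A$, so the uniform two-term expansion applies to each), control the small active components by the perimeter bound plus the isoperimetric inequality (the $\delta^{1/n}$ gain) and the inactive ones by Berezin, and then exploit the strict concavity of $t\mapsto t^{(n-1)/n}$ --- via the scaling identity $|\partial\Omega|\geq P_{\A}|\Omega|^{(n-1)/n}$ for $\Omega\in\A$, which is where dilation invariance enters for you, just as it enters the paper's rescaling arguments --- to force $|\Omega^1_\specParam|^{(n-1)/n}-|\Omega^1_\specParam|\to 0$ and hence $|\Omega^1_\specParam|\to\{0,1\}$, with $0$ excluded by the mass bound on small components. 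Your route avoids Lemma~\ref{lem:Two_comp} entirely and is more self-contained; the paper's route is softer in that it never needs to sum the two-term expansion over a $\delta$-dependent number of components, delegating all asymptotic analysis to the already-proved one- and two-component cases. Both endgames (identifying $|\partial\Omega_\infty|=P_{\A}$) coincide, and your alternative closing argument via exact cancellation of the leading Weyl terms is also sound, since the Berezin bound on $\sum_{k\geq2}\Tr(-\Delta_{\Omega^k_{\specParam_j}}-\specParam_j)^\gamma_\limminus$ produces precisely $\LT{\gamma}{n}(1-|\Omega^1_{\specParam_j}|)\specParam_j^{\gamma+n/2}$, so no rate on $1-|\Omega^1_{\specParam_j}|$ is needed.
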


\begin{remark}
  We note that Corollary~\ref{cor:Disjoint_unions} can be interesting even in extremely simple cases. For instance, it implies that among unions of disjoint balls the maximizers will as $\specParam\to \infty$ converge to a \emph{single} ball of unit measure.
\end{remark}

\begin{proof}[Proof of Corollary~\ref{cor:Disjoint_unions}]
  Again we can argue as in Proposition~\ref{prop:Asymptotic_Shape_Thm} to find that
  \begin{equation}\label{eq:multicomp_perbound}
    |\partial\Omega_{\specParam, \gamma}(\A^\infty)|\leq c.
  \end{equation} 
  Moreover, by Faber--Krahn's inequality we know that each component of a maximizer $\Omega_{\specParam, \gamma}(\A^\infty)$ has measure greater than $c\specParam^{-n/2}$. Indeed, the Riesz mean is zero for any component with smaller measure, which contradicts the maximality of $\Omega_{\specParam,\gamma}(\A^\infty)$ since we can remove such components and rescale the remaining domain to have measure one and in the process increasing the Riesz mean.

  Let $\Omega_{\specParam_j,\gamma}(\A^\infty)= \bigcup_{k\geq1} \Omega_{\specParam}^k$ be a maximizer, where we assume $|\Omega_{\specParam_j}^k|\geq |\Omega_{\specParam_j}^{k'}|$ if $k<k'$. Fix $\{\specParam_j\}_{j\geq 1} \uparrow \infty$. After possibly passing to a subsequence we can assume that $|\Omega_{\specParam_j}^1|< 1-\eps$, for some $\eps>0$. If this is not the case we are already done. 
  \medskip
  
  {\noindent\bf Step 1:} We first exclude that all components have size $\sim \specParam_j^{-n/2}$. Assume that along the sequence $\specParam_j$ (or a subsequence thereof) we have that $|\Omega_{\specParam_j}^1|\leq c\specParam_j^{-n/2}$ for some $c>0$. Due to the measure constraint we must have $\sim\specParam_j^{n/2}$ components. By the isoperimetric inequality
  \begin{equation}
     |\partial\Omega_{\specParam_j}| = \sum_{k\geq 1}|\partial\Omega_{\specParam_j}^k| \gtrsim \specParam_j^{n/2} \specParam_j^{-(n-1)/2} =\specParam_j^{1/2}\to \infty,
   \end{equation} 
   which contradicts~\eqref{eq:multicomp_perbound}.
  \medskip
  
  {\noindent\bf Step 2:}
   The set $\Omega_{\specParam_j}^1\! \cup \Omega_{\specParam_j}^2$ is a maximizer for the problem
   \begin{equation}
     \sup\{\Tr(-\Delta_\Omega-\specParam_j)_\limminus^\gamma : |\Omega|=m_j,\ \Omega= \Omega_1 \cup \Omega_2,\ \Omega_1\cap \Omega_2 =\emptyset,\ \mbox{and } \Omega_j\in \A \mbox{ or }\Omega_j=\emptyset\},
   \end{equation}
   with $m_j=|\Omega_{\specParam_j}^1|+|\Omega_{\specParam_j}^2|$.
  By Step 1 we can assume that $m_j \specParam_j^{n/2}\to \infty$ and hence this problem is, after rescaling to unit measure, equivalent to that considered in Lemma~\ref{lem:Two_comp}. Hence we find that for $j$ large enough $|\Omega_{\specParam_j}^2|\leq c|\Omega_{\specParam_j}^1|$ for any $0<c<1$ to be chosen later.
  \medskip
  
  {\noindent\bf Step 3:} Similarly, $\widehat\Omega_1=\bigcup_{k\geq 2}\Omega_{\specParam_j}^k$ is a maximizer for the problem
   \begin{equation}
     \sup\{\Tr(-\Delta_\Omega-\specParam_j)_\limminus^\gamma : |\Omega|=|\widehat\Omega_1|,\ \Omega= \textstyle{\bigcup_{k\geq 1}} \Omega_k,\ \Omega_k \in \A,\ \Omega_k\cap \Omega_{k'} =\emptyset \mbox{ if } k\neq k'\}.
   \end{equation}
    Since $|\widehat\Omega_1|=1-|\Omega_{\specParam_j}^1|>\eps$ this problem is again in the asymptotic regime and we can argue as in Steps $1$ and $2$ and find that $|\Omega_{\specParam_j}^3|\leq c|\Omega_{\specParam_j}^2|$ for any $0<c<1$ if $j$ is large enough. 
  \medskip
  
  {\noindent\bf Step 4:} Set $c=\tfrac{\eps}{2-\eps}$. We can then iterate the arguments above: For each $l> 1$ we have $|\bigcup_{k\geq l}\Omega_{\specParam_j}^k| = 1- |\bigcup_{k=1}^{l-1}\Omega_{\specParam_j}^k|\geq 1-|\Omega_{\specParam_j}^1|\sum_{k=1}^{l-1}\tfrac{\eps^{k-1}}{(2-\eps)^{k-1}}> \eps/2$, this ensures that the maximization problem which $\widehat \Omega_{l} = \bigcup_{k\geq l}\Omega_{\specParam_j}^{k}$ solves is still in the asymptotic regime when $j\to \infty$. Hence, by Steps $1$-$3$ it holds that $|\Omega_{\specParam_j}^{k+1}|\leq |\Omega^{k}_{\specParam_j}|\frac{\eps}{2-\eps}$, for all $k\geq 1$, provided that $\specParam_j$ is large enough (depending only on $\eps$).
\medskip
  
  {\noindent\bf Step 5:} Calculate the measure of $\Omega_{\specParam_j,\gamma}(\A^\infty)$:
   \begin{equation}
     |\Omega_{\specParam_j,\gamma}(\A^\infty)|=\sum_{k\geq 1}|\Omega_{\specParam_j}^k|\leq |\Omega_{\specParam_j}^1|\sum_{k=1}^\infty \frac{\eps^{k-1}}{(2-\eps)^{k-1}} \leq (1-\eps)\sum_{k=1}^\infty \frac{\eps^{k-1}}{(2-\eps)^{k-1}} = 1-\eps/2,
   \end{equation}
   which is a contradiction for all $\eps>0$ and hence $|\Omega_{\specParam_j}^1|\to 1$ as $j\to \infty$. 
\medskip

   By Proposition~\ref{prop:Asymptotic_Shape_Thm} we can conclude that $\Omega_{\specParam_j}^1$ converges to a domain which minimizes the perimeter among domains of unit measure in $\A$. This completes the proof.
\end{proof}

\noindent{\bf Acknowledgements.} It is a pleasure to thank Ari Laptev for his encouragement and support. 
The author is deeply indebted to the anonymous referees whose comments and suggestions improved both the exposition and results immensely. The author would also like to thank Rupert Frank, Richard Laugesen and Timo Weidl for many helpful suggestions and discussions. Financial support from the Swedish Research Council grant no.\ 2012-3864 and Svenska Matematikersamfundet is gratefully acknowledged.


\bibliographystyle{amsplain}

\def\myarXiv#1#2{\href{http://arxiv.org/abs/#1}{\texttt{arXiv:#1\,[#2]}}}

\end{document}